\theoremstyle{plain}
  \newtheorem{thm}{Theorem}[section]
  \newtheorem{prop}[thm]{Proposition}
  \newtheorem{cor}[thm]{Corollary}
  \newtheorem{lem}[thm]{Lemma}
  \newtheorem{claim}[thm]{Claim}
  \newtheorem{prob}[thm]{Problem}
  \newtheorem{quest}[thm]{Question}
  \newtheorem{conj}[thm]{Conjecture}
\theoremstyle{definition}
  \newtheorem{dfn}[thm]{Definition}
\theoremstyle{remark}
  \newtheorem{rem}[thm]{Remark}
\newcounter{yon}
\numberwithin{equation}{section}
	\newcommand{\dist}{\mathop{\mathit{d}} \nolimits}
	\newcommand{\diam}{\mathop{\mathrm{diam}} \nolimits}
	\newcommand{\ric}{\mathop{\mathit{Ric}}  \nolimits}
	\newcommand{\tra}{\mathop{\mathrm{Tra}} \nolimits}
	\newcommand{\mushi}{\mathop{\mathrm{def}} \nolimits}
	\newcommand{\pr}{\mathop{\mathrm{proj}} \nolimits}
	\newcommand{\di}{\mathop{\mathrm{di}}   \nolimits}
	\newcommand{\sep}{\mathop{\mathrm{Sep}} \nolimits}
	\newcommand{\obs}{\mathop{\mathrm{ObsDiam}}  \nolimits}
	\newcommand{\lip}{\mathop{\mathcal{L}ip}      \nolimits}
	\newcommand{\me}{\mathop{\mathrm{me}}      \nolimits}
	\newcommand{\supp}{\mathop{\mathrm{Supp}}    \nolimits}
	\newcommand{\obd}{\mathop{\underline{H}_{\lambda}\mathcal{L}\iota_1}    \nolimits}
	\newcommand{\obdd}{\mathop{\underline{H}_{1}\mathcal{L}\iota_1}
	\nolimits}
	\newcommand{\bobd}{\mathop{H_{\lambda}\mathcal{L}\iota_1}
	\nolimits}
    \newcommand{\bbobd}{\mathop{H_{1}\mathcal{L}\iota_1}
	\nolimits}
	\newcommand{\vol}{\mathop{\mathit{vol}}        \nolimits}
    \newcommand{\hess}{\mathop{\mathrm{Hess}}                \nolimits}
    \newcommand{\ent}{\mathop{\mathrm{Ent}}
	\nolimits}
    \newcommand{\e}{\mathop{\varepsilon}       \nolimits}
\title[Eigenvalues and isoperimetric
constants]{Eigenvalues of Laplacian and Multi-way isoperimetric
constants on weighted Riemannian manifolds}
\author{Kei Funano}
\thanks{\hspace{-0.4cm}Supported by a Grant-in-Aid for
Scientific Research from the Japan Society for the Promotion of Science.\\Department of Mathematics, Faculty of Science,\\
Kyoto University, Kyoto 606-8502, JAPAN\\
\textit{e-mail}: kfunano@math.kyoto-u.ac.jp}
\date{}
\begin{document}
\maketitle
\begin{abstract}We investigate the distribution of eigenvalues of the weighted
 Laplacian on closed weighted Riemannian manifolds of nonnegative Bakry-\'Emery Ricci curvature. We derive some universal inequalities among eigenvalues of the weighted Laplacian on such manifolds. These
 inequalities are quantitative versions of the previous theorem by the author with
 Shioya. We also study some geometric quantity, called multi-way
 isoperimetric constants, on such manifolds and obtain similar universal
 inequalities among them. Multi-way isoperimetric constants are generalizations
 of the Cheeger constant. Extending and following the heat semigroup argument by Ledoux and
 E.~Milman, we extend the Buser-Ledoux result to the $k$-th eigenvalue
 and the $k$-way
 isoperimetric constant. As a consequence the $k$-th eigenvalue of the
 weighted Laplacian and the $k$-way isoperimetric constant are equivalent up
 to polynomials of $k$ on closed weighted manifolds of nonnegative
 Bakry-\'Emery Ricci curvature. 
 
\end{abstract}
\section{Introduction}
\subsection{Eigenvalues of the weighted Laplacian}
Let $(M,\mu)$ be a pair of a Riemmanian manifold $M$
and a Borel probability measure $\mu$ on $M$ of
the form $d\mu=\exp(-\psi)d\vol_M$, $\psi\in C^2(M)$.
We call such a pair $(M,\mu)$ an \emph{weighted Riemannian manifold}.
We define the \emph{weighted Laplacian} (also called \emph{Witten Laplacian}) $\triangle_\mu$ by
\begin{align*}
  \triangle_\mu := \triangle -  \nabla \psi \cdot \nabla,
\end{align*}
where $\triangle$ is the usual positive Laplacian on $M$.
If $M$ is closed, then the spectrum of the weighted Laplacian
$\triangle_\mu$ is discrete, where $\triangle_\mu$ is considered
as a self-adjoint operator on $L^2(M,\mu)$. We denote its eigenvalues
with multiplicity by
\begin{align*}
  0=\lambda_0(M,\mu)<\lambda_1(M,\mu)
  \leq \lambda_2(M,\mu)\leq \cdots \leq \lambda_k(M,\mu) \leq \cdots .
\end{align*}

In this paper we study the following problem:
\begin{prob}\label{prob1}\upshape How do $\lambda_1(M,\mu), \lambda_2(M,\mu), \cdots,
 \lambda_k(M,\mu), \cdots$ lie on the real line?
 \end{prob}
The above problem amounts to finding the relation among
  eigenvalues of the weighted Laplacian.

  In order to tackle Problem \ref{prob1} let us focus on diameter estimates
  in terms of eigenvalues of the weighted Laplacian due to Li and Yau
  \cite[Theorem 10]{liyau} and Cheng \cite[Corollary 2.2]{cheng} (see also \cite{setti}). Combining
  their results one could obtain that $\lambda_k(M,\mu)\leq
  c(k,n)\lambda_1(M,\mu)$ for any natural number $k$ and any closed
  weighted Riemannian manifold $(M,\mu)$ of nonnegative Bakry-\'Emery Ricci curvature, here $c(k,n)$ is
  a constant depending only on $k$ and the dimension $n$ of $M$. The dependence of
  the constant $c(k,n)$ on $n$ comes from Cheng's result. In order to bypass the dimension dependence of the inequality by Cheng,
we consider the \emph{observable diameter} $\obs ((M,\mu);-\kappa)$, $\kappa>0$, introduced
by Gromov in \cite{gromov}. The observable diameter comes from the study of 'concentration of measure phenomenon' and it might be
interpreted as a substitute of the usual diameter. See Definition
\ref{obdidef}. The observable diameter is closely related with the first nontrivial eigenvalue of the weighted Laplacian as was firstly
  observed by Gromov and V.~Milman in \cite{milgro}:
  \begin{align}\label{grmils1}
   \obs_{\mathbb{R}}((M,\mu);-\kappa) \leq \frac{6}{\sqrt{\lambda_1(M,\mu)}}\log \frac{1}{\kappa}.
        \end{align}
  Under assuming the nonnegativity of Bakry-\'Emery Ricci
       curvature, E.~Milman obtained the opposite inequality
       (\cite{emil2,emil3,emil1}):
       \begin{align*}\obs_{\mathbb{R}}((M,\mu);-\kappa) \geq
       \frac{1-2\kappa}{2\sqrt{\lambda_1(M,\mu)}}.
        \end{align*}See (\ref{2.2sss1}), Proposition \ref{2.2t5}, and Lemma \ref{obsep}
        for the proof of the above two inequalities. Observe that these two inequalities are independent
        of the dimension. One might regard the Gromov-V.~Milman inequality
  as a dimension-free Cheng's inequality for $k=1$ and also the
        E.~Milman inequality as a dimension-free Li-Yau's inequality.

  One of the main results in this paper is the following:
\begin{thm}\label{Mthm}There exists a universal numeric constant $c>0$ such that if
 $(M,\mu)$ is a closed weighted Riemannian manifold of nonnegative
 Bakry-\'Emery Ricci curvature and $k$ is a natural number, then we have
 \begin{align*}\lambda_k(M,\mu)\leq \exp (c  k)\lambda_1(M,\mu).
 \end{align*}
 \end{thm}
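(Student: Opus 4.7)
The proof plan is to bound $\lambda_k(M,\mu)$ from above via the Courant--Fischer min--max principle: it suffices to exhibit a $(k{+}1)$-dimensional subspace $V\subset H^1(M,\mu)$, orthogonal to the constants, whose elements all have Rayleigh quotient at most $\exp(ck)\lambda_1(M,\mu)$. The standard recipe is to produce $k{+}1$ pairwise disjoint subsets $A_0,\dots,A_k\subset M$ with $\mu(A_i)\ge p$ and pairwise distance $\ge r$, and to take $V$ spanned by Lipschitz plateau functions supported in the $r/2$-neighbourhoods of the $A_i$; the disjoint supports give $L^2$-orthogonality, and the bump construction bounds each Rayleigh quotient by a universal multiple of $1/(r^2 p)$ (or, via the Chung--Grigoryan--Yau refinement, by $r^{-2}\log^2(1/p)$).

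The first ingredient is the E.~Milman inequality quoted just above the theorem. Applied at level $\kappa=1/4$ it yields a $1$-Lipschitz $f:M\to\mathbb{R}$ whose push-forward $\nu:=f_*\mu$ has $1/4$-essential diameter at least $L:=1/(4\sqrt{\lambda_1(M,\mu)})$: every interval $I\subset\mathbb{R}$ with $\nu(I)\ge 3/4$ satisfies $|I|\ge L$. Since $f$ is $1$-Lipschitz, disjoint intervals of $\mathbb{R}$ pull back through $f^{-1}$ to subsets of $M$ of the same $\mu$-mass and at least the same pairwise distance. The problem therefore reduces to finding $k{+}1$ disjoint intervals $I_0,\dots,I_k\subset\mathbb{R}$ with $\nu(I_i)\ge p_k$ and pairwise separation $\ge r_k$ such that $r_k^{-2}\log^2(1/p_k)\le \exp(ck)\lambda_1(M,\mu)$.

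Producing these intervals is the heart of the argument. The naive plan is iterated bisection around the current median of $\nu$, each round introducing a gap proportional to the surviving essential diameter. If every round could be made to yield daughter measures of mass $\ge\mu_0/2$ and essential diameter $\ge c\ell$ for a universal $c\in(0,1)$, then $\lceil\log_2(k{+}1)\rceil$ iterations would give $k{+}1$ intervals with parameters polynomial in $1/k$, and even the stronger bound $\lambda_k\le\mathrm{poly}(k)\lambda_1(M,\mu)$ would follow.

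The main obstacle is that naive bisection can annihilate the essential-diameter lower bound: for a bimodal $\nu$ concentrated near $\pm L/2$, the two halves of $\nu$ at its median each have essential diameter $0$, and no further bisection can produce separated sub-intervals of them. In particular, nonnegative Bakry-\'Emery Ricci curvature lifts through the $1$-Lipschitz $f$ to supply spread of $\nu$ on a single scale only, not multi-scale spread, and E.~Milman's inequality does not pass to the conditional measures $\nu|_J/\nu(J)$ on sub-intervals $J$. Overcoming this is precisely where the heat-semigroup methods of Buser--Ledoux--E.~Milman become indispensable: one replaces pushforward geometry of $\nu$ by contraction estimates such as $\|\nabla P_t h\|_\infty\le\|\nabla h\|_\infty$ for $P_t=e^{-t\triangle_\mu}$ at a scale $t\sim 1/(k\lambda_1(M,\mu))$, which extract information about $(M,\mu)$ at every scale from a single spectral gap. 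I expect the actual proof to replace iterated bisection by such a semigroup argument, and this is where the exponential (rather than polynomial) loss in $k$ enters.
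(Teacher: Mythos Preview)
Your proposal identifies the right obstacle but does not resolve it, and your guess about how the paper overcomes it is incorrect. You correctly observe that projecting to $\mathbb{R}$ through a single $1$-Lipschitz function and then bisecting $\nu=f_*\mu$ cannot produce $k{+}1$ well-separated sets in general: the pushforward may be bimodal, and E.~Milman's inequality does not descend to conditional measures. But you then speculate that the fix is a heat-semigroup argument \`a la Buser--Ledoux. That is not what the paper does here; the semigroup tools are used only for Theorem~\ref{extthm}, not for Theorem~\ref{Mthm}.

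The paper's argument runs in the opposite logical direction from yours. Rather than trying to lower-bound the $(k{+}1)$-separation distance from $\lambda_1$ and then invoke test functions, it starts from the Chung--Grigor'yan--Yau upper bound on the $(k{+}1)$-separation in terms of $\lambda_k$ (Theorem~\ref{2.2t4}), and then proves a descent lemma (Theorem~\ref{3t2}): if $\sep((M,\mu);\kappa,\dots,\kappa)\le D^{-1}\log\kappa^{-2}$ with $k{+}1$ entries, then the same inequality holds with only $k$ entries at the cost of a universal multiplicative constant. Iterating this $k{-}1$ times reduces to the $2$-separation, which E.~Milman's theorem (Proposition~\ref{2.2t5}) bounds below by $c/\sqrt{\lambda_1}$. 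Comparing the two bounds on $\sep((M,\mu);\kappa,\kappa)$ yields $\lambda_k\le c^{2(k-1)}\lambda_1$, and the exponential in $k$ comes precisely from the $k{-}1$ iterations of the descent constant.

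The descent lemma itself is proved not by semigroup estimates but by optimal transport and the $\mathrm{CD}(0,\infty)$ condition. One assumes for contradiction that $k$ sets $A_0,\dots,A_{k-1}$ of mass $\ge\kappa$ are separated by much more than the $(k{+}1)$-separation bound allows; by Lemma~\ref{2.2l2} their neighbourhoods must then cover almost all of $M$. After regrouping (using a $2$-separation coming from Proposition~\ref{2.2t5}) one obtains two sets $B_0\subset B_1$ with $\mu(B_0)\le 1/2$, $\mu(B_1)\ge 1-\kappa^6$, and a large gap between $B_0$ and $B_1\setminus B_0$. One then runs a short Wasserstein geodesic from $\mu_{B_0}$ towards a near-copy of $\mu_{B_1}$; the displacement convexity of entropy (inequality~\eqref{3s1}) forces $\log\mu(\supp\nu_t)$ to be nearly $(1{-}t)\log\mu(B_0)$, while the large gap traps $\supp\nu_t$ inside $B_0$, yielding a contradiction for small $\kappa$. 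No heat kernel appears.

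In short: the missing idea is not a semigroup contraction but the separation-distance descent via entropy along Wasserstein geodesics, and the argument chains CGY $\to$ descent $\to$ E.~Milman rather than E.~Milman $\to$ bisection $\to$ test functions.
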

Theorem \ref{Mthm} also holds for a convex domain with $C^2$ boundary in a
closed weighted Riemannian manifold of nonnegative Bakry-\'Emery Ricci curvature and
with the Neumann boundary condition, the proof of which is identical.

 The crucial point of Theorem \ref{Mthm} is that the constant $\exp(ck)$
 is independent of the dimension and quantitative. In \cite[Theorem
 1.1]{funa6} the author proved with Shioya that the fraction $\lambda_k(M,\mu)/\lambda_1(M,\mu)$
 is bounded from above by some universal constant depending only on
 $k$. However the estimate was not quantitative since the proof in
 \cite{funa6} relies on some
 compactness argument.

 In Theorem \ref{Mthm}, the nonnegativity of Bakry-\'Emery Ricci
 curvature is necessary as was remarked in \cite{funa6}. In fact for any
 $\e>0$ there exists a closed Riemannian manifold $M$ of Ricci
 curvature $\geq -\e$ such that $\lambda_2(M)/\lambda_1(M)\geq
 1/\e$. Taking an appropriate scaling, some 'dumbbell space' becomes such
 an example, see \cite[Example 4.9]{funa6} for details. 

The following corollary
  corresponds to Cheng's inequality for general $k$:

 \begin{cor}\label{obdicor}There exists a universal numeric constant $c>0$ such that if
 $(M,\mu)$ is a closed weighted Riemannian manifold of nonnegative
 Bakry-\'Emery Ricci curvature and $k$ is a natural number, then we have
  \begin{align*}
   \obs_{\mathbb{R}}((M,\mu);-\kappa)\leq
   \frac{\exp(ck)}{\sqrt{\lambda_k(M,\mu)}}\log \frac{1}{\kappa}.
   \end{align*}
  \end{cor}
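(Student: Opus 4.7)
The plan is to derive the corollary as a direct consequence of Theorem \ref{Mthm} combined with the Gromov--V.~Milman inequality (\ref{grmils1}), which is already cited in the excerpt. The only work is to propagate the exponential-in-$k$ factor through the standard $\obs \lesssim (\log 1/\kappa)/\sqrt{\lambda_1}$ bound.

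First, I would invoke Theorem \ref{Mthm} to get
\begin{align*}
\lambda_1(M,\mu) \;\geq\; \exp(-ck)\,\lambda_k(M,\mu),
\end{align*}
where $c>0$ is the universal constant of that theorem. Rearranging gives
\begin{align*}
\frac{1}{\sqrt{\lambda_1(M,\mu)}} \;\leq\; \frac{\exp(ck/2)}{\sqrt{\lambda_k(M,\mu)}}.
\end{align*}

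Next, substitute this estimate into the Gromov--V.~Milman inequality (\ref{grmils1}):
\begin{align*}
\obs_{\mathbb{R}}((M,\mu);-\kappa)
\;\leq\; \frac{6}{\sqrt{\lambda_1(M,\mu)}}\log\frac{1}{\kappa}
\;\leq\; \frac{6\exp(ck/2)}{\sqrt{\lambda_k(M,\mu)}}\log\frac{1}{\kappa}.
\end{align*}
Absorbing the factor $6$ into the exponential by choosing a slightly larger universal constant $c'>0$ (say $c' = c/2 + \log 6$, or simply $c' = c$ after re-labelling) yields the desired bound
\begin{align*}
\obs_{\mathbb{R}}((M,\mu);-\kappa) \;\leq\; \frac{\exp(c'k)}{\sqrt{\lambda_k(M,\mu)}}\log\frac{1}{\kappa}.
\end{align*}

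There is essentially no obstacle here beyond having Theorem \ref{Mthm} in hand; the corollary is a formal consequence of coupling the dimension-free upper bound on $\obs_{\mathbb{R}}$ in terms of $\lambda_1$ with the newly established eigenvalue ratio bound $\lambda_k/\lambda_1 \leq \exp(ck)$. The whole difficulty of the corollary is therefore concentrated in the proof of Theorem \ref{Mthm}; the passage from $\lambda_1$ to $\lambda_k$ in the observable diameter estimate is a one-line substitution, and the constants remain universal and quantitative precisely because both inputs are.
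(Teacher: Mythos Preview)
Your proof is correct and follows exactly the approach the paper indicates: the corollary is obtained by combining Theorem \ref{Mthm} with the Gromov--V.~Milman inequality (\ref{grmils1}), and your substitution is precisely the one-line argument the paper has in mind.
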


  Corollary \ref{obdicor} follows from Theorem \ref{Mthm} together with
  the Gromov-V.~Milman inequality (\ref{grmils1}).

  In order to treat the $k$-th eigenvalue we will
  work on the notion of \emph{'separation'}, which is regarded as a
  generalization of the concentration of measure phenomenon (see
  Subsection \ref{sepsec}). It tells the information whether there exists a pair or not which
  are not separated in some sense among any $k+1$-tuple subsets with a
  fixed volume. According to the work of Chung, Grigor'yan, and Yau \cite{cgy1,cgy2},
  it is related with the information of general eigenvalues of the
  weighted Laplacian (see Theorem \ref{2.2t4}). Under assuming the nonnegativity of Bakry-\'Emery Ricci
  curvature, we prove that if a nonseparated pair always exists among any
  $k+1$-tuple of subsets, then it also holds among any $k$-tuple of
  subsets. In order to prove it, we use the curvature-dimension
  condition CD$(0,\infty)$ in the sense of Lott-Villani \cite{lott-villani} and
  Sturm \cite{sturm3,sturm}, which is equivalent to the nonnegativity of
  Bakry-\'Emery Ricci curvature. Idea of the proof of Theorem \ref{Mthm}
  will be discussed in Section \ref{prfmain} in more detail. 
  \subsection{Multi-way isoperimetric constants}
   Let $(M,\mu)$ be a closed weighted Riemannian manifold. Recall that
 \emph{Minkowski's (exterior) boundary measure} of a Borel subset $A$ of $M$, which we denote by $\mu^+(A)$, is defined as 
 \begin{align*}
  \mu^{+}(A):=\liminf_{r\to 0} \frac{\mu(O_r(A))- \mu(A)}{r},
  \end{align*}where $O_r(A)$ denotes the open $r$-neighborhood of
  $A$. We consider the following geometric quantity:
 \begin{dfn}[{Multi-way isoperimetric constants}]\upshape For a natural number $k$, we define the \emph{$k$-way
  isoperimetric constant} as
  \begin{align*}
   h_k(M,\mu):= \inf_{A_0,A_1,\cdots,A_k} \max_{0\leq i\leq k}
   \frac{\mu^{+}( A_i)}{\mu(A_i)},
   \end{align*}where the infimum runs over all collections of $k+1$
  non-empty, disjoint Borel subsets $A_0,A_1,\cdots, A_k$ of $M$. $h_1(M,\mu)$
  is also called the \emph{Cheeger constant}.
  \end{dfn}
  Note that $h_k(M,\mu)\leq  h_{k+1}(M,\mu)$ by the
  definition. We are interested in the distribution of
  $h_1(M,\mu),h_2(M,\mu),\cdots, h_k(M,\mu),\cdots$ on the real line and
  the relation between $\lambda_k(M,\mu)$ and $h_k(M,\mu)$.

  The Cheeger-Maz'ja inequality
   (\cite{Mazya1,Mazya2,Mazya3} and \cite{cheeger}, see \cite[Theorem 1.1]{emil2}) states that
   \begin{align}\label{chemazs}
    h_1(M,\mu)\leq 2\sqrt{\lambda_1(M,\mu)}.
    \end{align}In \cite{lgt}, resolving a conjecture by Miclo
    \cite{miclo} (see also \cite{djm12}), Lee, Gharan, and Trevisan obtained a higher order
    Cheeger-Maz'ja inequality for general graphs. Although they proved
    it for graphs, by an appropriate
    modification of their proof (e.g., by replacing sums with
    integrals), it is also valid for weighted Riemannian manifolds. In
    Appendix, we will discuss a point that we have to be care when we treat their argument for the smooth setting. 
  \begin{thm}[{Lee et al. \cite[Theorem 3.8]{lgt}}]\label{leethm}There exists a universal
   numerical constant $c>0$ such that for all closed weighted Riemannian
   manifold $(M,\mu)$ and a natural number $k$ we have 
   \begin{align*}
    h_k(M,\mu)\leq c k^3 \sqrt{\lambda_k(M,\mu)}.
    \end{align*}
   \end{thm}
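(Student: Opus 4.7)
The plan is to transcribe the Lee--Gharan--Trevisan proof from the graph setting to the weighted Riemannian setting by the substitution scheme indicated by the author (finite sums become integrals, discrete gradients become $|\nabla\cdot|$). The core intermediate step is a \emph{disjoint-support lemma}: one produces $k+1$ disjointly supported Lipschitz functions $f_0,f_1,\ldots,f_k$ on $M$ whose Rayleigh quotients
\[
 \mathcal{R}(f_i):=\frac{\int_M|\nabla f_i|^2\,d\mu}{\int_M f_i^{\,2}\,d\mu}
\]
are each bounded by $Ck^{6}\lambda_k(M,\mu)$ for a universal constant $C$. Granted this lemma, the single-function Cheeger--Maz'ja inequality (a coarea argument produces, for any Lipschitz $f$, a level set $A=\{|f|>t\}\subset\supp(f)$ satisfying $\mu^+(A)/\mu(A)\le 2\sqrt{\mathcal{R}(f)}$) applied to each $f_i$ yields disjoint Borel sets $A_0,\ldots,A_k$ with $\mu^+(A_i)/\mu(A_i)\le 2\sqrt{C}\,k^{3}\sqrt{\lambda_k(M,\mu)}$, and the definition of $h_k$ then gives the desired bound.

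To build the $f_i$ I would use the first $k+1$ orthonormal eigenfunctions $\phi_0,\phi_1,\ldots,\phi_k$ (so that $\|\phi_j\|_{L^2(\mu)}=1$ and $\int|\nabla\phi_j|^2\,d\mu=\lambda_j\le\lambda_k$) and form the spectral embedding $F:M\to\mathbb{R}^{k+1}$, $F(x)=(\phi_0(x),\ldots,\phi_k(x))$. By orthonormality $\int_M|F|^2\,d\mu=k+1$, so $F$ uses essentially all the mass of $M$. On the bulk region $L=\{|F|^2\ge\delta\}$ for a suitable $\delta>0$ one can radially project to the unit sphere $S^k\subset\mathbb{R}^{k+1}$, obtaining $\widetilde F(x)=F(x)/|F(x)|$. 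A randomized partitioning scheme on $S^k$ (à la Lee--Gharan--Trevisan) yields $k+1$ disjoint pieces $U_0,\ldots,U_k\subset S^k$ of controlled angular diameter, together with a Lipschitz partition-of-unity $\{\varphi_i\}$ supported on them. The localized bumps are $f_i(x):=|F(x)|\cdot\varphi_i(\widetilde F(x))$; they are disjointly supported and Lipschitz, and a chain-rule plus Cauchy--Schwarz calculation bounds $\mathcal{R}(f_i)$ in terms of $\lambda_k$, $\int|\nabla F|^2\,d\mu=\sum_j\lambda_j\le(k+1)\lambda_k$, and the Lipschitz norms of the $\varphi_i$. The overall cost of the spherical partitioning is exactly the $k^6$ factor.

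The step I expect to require the most care is not the combinatorial engineering but the smoothness/truncation technicalities that the author explicitly flags in the appendix. The maps $\varphi_i\circ\widetilde F$ and their products with $|F|$ are only Lipschitz, not $C^2$, so the coarea formula, the Cheeger--Maz'ja inequality, and the Rayleigh-quotient manipulations must be invoked in their Sobolev / BV form, and one must check that restriction of $f_i$ to its support and thresholding preserve the Lipschitz bound used in the estimate. Once the correct function-space framework is fixed, every remaining inequality is a direct translation of a finite sum in the LGT graph argument into an integral against $d\mu$, and assembling them produces the bound $h_k(M,\mu)\le ck^3\sqrt{\lambda_k(M,\mu)}$ with a universal $c>0$.
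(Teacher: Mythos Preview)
Your proposal is correct and matches the paper's approach: the paper does not give a self-contained proof but simply asserts that the Lee--Gharan--Trevisan argument transcribes from graphs to weighted manifolds by replacing sums with integrals, and singles out in its Appendix exactly the one technical point you flag---the Cheeger--Maz'ja sweep lemma for a Lipschitz map $f:X\to\mathbb{R}^n$, proved there via the co-area inequality to yield a level set $A\subset\supp f$ with $\mu^+(A)/\mu(A)\le 2\||\nabla f|\|_{L^2(\mu)}/\|f\|_{L^2(\mu)}$. Your outline of the spectral embedding, radial projection, random spherical partition, and disjointly supported bump functions is precisely the LGT machinery the paper is invoking, and your identification of the Lipschitz-vs-$C^2$ issue as the delicate step is exactly what the Appendix addresses.
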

 Lee et al. proved the above order $k^3$ can be improved to $k^2$
 for graphs (\cite[Theorem 1.1]{lgt}). Since it is uncertain that Lemma $4.7$ in
 \cite{lgt} holds or does not hold for the case of weighted Riemannian
 manifolds, the author does not know $k^3$ order in Theorem \ref{leethm} can be improved to
 $k^2$ order.

 The opposite inequality of the Cheeger-Maz'ja inequality (\ref{chemazs}) was shown by Buser \cite{buser} and Ledoux
 \cite{led}. They proved the existence of universal numeric constant $c>0$
 such that 
 \begin{align}\label{busled}
  c\sqrt{\lambda_1(M,\mu)}\leq h_1(M,\mu)
  \end{align}for any closed weighted Riemannian manifold $(M,\mu)$ of
  nonnegative Bakry-\'Emery Ricci curvature. Combining Theorems \ref{Mthm}
  and \ref{leethm} with (\ref{busled}) we obtain
  \begin{align*}
   h_k(M,\mu)\lesssim k^3\sqrt{\lambda_k(M,\mu)}\lesssim k^3\exp
   (ck)\sqrt{\lambda_1(M,\mu)}\lesssim k^3\exp(ck)h_1(M,\mu),
   \end{align*}where $A \lesssim B$ denotes $A\leq C B$ for some
   universal numeric constant $C>0$. Consequently we have the following:

  \begin{thm}\label{Mthm2}There exists a universal numeric constant $c>0$ such that if
 $(M,\mu)$ is a closed weighted Riemannian manifold of nonnegative
 Bakry-\'Emery Ricci curvature and $k$ is a natural number, then we have
   \begin{align*}
    h_k(M,\mu)\leq k^3 \exp(c k)h_1(M,\mu).
    \end{align*}
   \end{thm}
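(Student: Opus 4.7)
The plan is to obtain this as a three-step chain from results already available in the paper, with no new argument required beyond combining inequalities. The point is that Theorem \ref{Mthm} is precisely the nontrivial ingredient needed to upgrade the polynomial-in-$k$ Cheeger-type bound of Lee--Gharan--Trevisan into a comparison between $h_k$ and $h_1$ under nonnegative Bakry-\'Emery Ricci curvature.

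First, I would apply the higher-order Cheeger--Maz'ja inequality (Theorem \ref{leethm}) to bound
\[
h_k(M,\mu)\leq c_1 k^3\sqrt{\lambda_k(M,\mu)},
\]
which requires no curvature hypothesis. Next, under the nonnegativity of Bakry-\'Emery Ricci curvature, I would invoke Theorem \ref{Mthm} to replace $\lambda_k$ by $\lambda_1$:
\[
\sqrt{\lambda_k(M,\mu)}\leq \exp(c_2 k/2)\sqrt{\lambda_1(M,\mu)}.
\]
Finally, the Buser--Ledoux inequality (\ref{busled}), again valid under the curvature assumption, gives
\[
\sqrt{\lambda_1(M,\mu)}\leq c_3\, h_1(M,\mu).
\]
Multiplying the three estimates produces $h_k(M,\mu)\leq c_1c_3 k^3\exp(c_2 k/2)h_1(M,\mu)$, which has exactly the required form after absorbing constants (and if desired, folding the $k^3$ factor into the exponential by enlarging the universal constant $c$).

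There is no genuine obstacle in this deduction itself; the chain is essentially a one-line calculation. The substance lies in Theorem \ref{Mthm}, whose quantitative bound $\lambda_k\leq\exp(ck)\lambda_1$ is the only nonroutine input. The remaining ingredients are cited: Theorem \ref{leethm} is a transcription of the Lee--Gharan--Trevisan result from the graph setting, with the care needed for the smooth adaptation discussed in the appendix; and (\ref{busled}) is the classical Buser--Ledoux bound under nonnegative Bakry-\'Emery Ricci curvature. Thus the argument is structurally short and reduces the isoperimetric theorem \ref{Mthm2} entirely to its spectral analogue \ref{Mthm}.
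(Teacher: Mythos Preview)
Your proposal is correct and follows exactly the paper's own derivation: the paper also chains Theorem~\ref{leethm}, Theorem~\ref{Mthm}, and the Buser--Ledoux inequality~(\ref{busled}) to obtain $h_k\lesssim k^3\sqrt{\lambda_k}\lesssim k^3\exp(ck)\sqrt{\lambda_1}\lesssim k^3\exp(ck)h_1$. There is nothing to add.
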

   In \cite{mimura} Mimura obtained similar universal inequalities
 among multi-way isoperimetric constants for Cayley graphs.
 
 Following and extending the heat semigroup argument by Ledoux \cite{led}
 and E.~Milman \cite{emil1}, we obtain the extension of the Buser-Ledoux
 Theorem:

   \begin{thm}\label{extthm}Assume that a closed weighted Riemannian manifold
    $(M,\mu)$ has nonnegative Bakry-\'Emery Ricci curvature. Then for
    any natural number $k$ we have
    \begin{align*}
      (80k^3)^{-1}\sqrt{\lambda_k(M,\mu)} \leq h_k(M,\mu).
     \end{align*}
    \end{thm}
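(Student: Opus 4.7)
The plan is to extend the Ledoux--E.~Milman heat-semigroup proof of the Buser inequality (\ref{busled}), which handles the case $k=1$, to general $k$. Using the $\cd(0,\infty)$ hypothesis (equivalent to nonnegativity of Bakry--\'Emery Ricci curvature) I will construct a $(k+1)$-dimensional subspace of $H^{1}(M,\mu)$ on which the Rayleigh quotient is bounded by a polynomial in $k$ times $h_{k}(M,\mu)^{2}$, and then invoke the Courant--Fischer min-max characterization of $\lambda_{k}(M,\mu)$.

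Fix $\varepsilon>0$ and choose disjoint Borel subsets $A_{0},\dots,A_{k}\subset M$ with $\mu^{+}(A_{i})/\mu(A_{i})\le h_{k}(M,\mu)+\varepsilon$ for every $i$. Let $(P_{t})_{t>0}$ denote the heat semigroup generated by $\triangle_{\mu}$, put $f_{i}:=\mathbf{1}_{A_{i}}/\sqrt{\mu(A_{i})}$ (so that the $f_{i}$ are orthonormal in $L^{2}(M,\mu)$), and $g_{i}:=P_{t}f_{i}$. The $\cd(0,\infty)$ condition furnishes two standard tools. First, the Bakry--\'Emery inequality $|\nabla P_{s}f_{i}|^{2}\le P_{s}(|\nabla f_{i}|^{2})$ renders $s\mapsto\|\nabla P_{s}f_{i}\|_{L^{2}(\mu)}^{2}$ non-increasing, which combined with the energy identity $\tfrac{d}{ds}\|P_{s}f_{i}\|_{L^{2}(\mu)}^{2}=-2\|\nabla P_{s}f_{i}\|_{L^{2}(\mu)}^{2}$ yields the gradient bound $\|\nabla g_{i}\|_{L^{2}(\mu)}^{2}\le 1/(2t)$. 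Second, Ledoux's $L^{1}$ Buser-type inequality asserts $\|P_{s}\mathbf{1}_{A}-\mathbf{1}_{A}\|_{L^{1}(\mu)}\le\sqrt{2s/\pi}\,\mu^{+}(A)$ for every Borel $A\subset M$; this is the analytic heart of (\ref{busled}).

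I would then compute the Gram matrix $G=(\langle g_{i},g_{j}\rangle_{L^{2}(\mu)})=(\langle f_{i},P_{2t}f_{j}\rangle_{L^{2}(\mu)})$. For the diagonal, writing $P_{2t}\mathbf{1}_{A_{i}}=\mathbf{1}_{A_{i}}+(P_{2t}\mathbf{1}_{A_{i}}-\mathbf{1}_{A_{i}})$ and applying the $L^{1}$ bound gives $G_{ii}\ge 1-C\sqrt{t}\,(h_{k}(M,\mu)+\varepsilon)$ for a universal constant $C$. For $i\neq j$, the disjointness of $A_{i}$ and $A_{j}$ together with the self-adjointness identity $\int_{A_{i}}P_{2t}\mathbf{1}_{A_{j}}\,d\mu=\int_{A_{j}}P_{2t}\mathbf{1}_{A_{i}}\,d\mu$ lets one apply the $L^{1}$ bound to whichever of $\mathbf{1}_{A_{i}},\mathbf{1}_{A_{j}}$ has the smaller perimeter, and after normalizing by $\sqrt{\mu(A_{i})\mu(A_{j})}$ this yields $|G_{ij}|\le C\sqrt{t}\,(h_{k}(M,\mu)+\varepsilon)$. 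Choose $t$ so that $(k+1)C\sqrt{t}(h_{k}(M,\mu)+\varepsilon)\le 1/2$: Gershgorin's theorem then guarantees that the smallest eigenvalue of $G$ is at least $1/2$, so $V:=\mathrm{span}(g_{0},\dots,g_{k})$ is genuinely $(k+1)$-dimensional and every $f=\sum_{i=0}^{k}c_{i}g_{i}\in V$ satisfies $\|f\|_{L^{2}(\mu)}^{2}\ge\tfrac{1}{2}\sum c_{i}^{2}$. The triangle inequality plus the gradient bound yield $\|\nabla f\|_{L^{2}(\mu)}^{2}\le(k+1)\sum c_{i}^{2}/(2t)$, so the Rayleigh quotient on $V$ is at most $(k+1)/t$; the min-max principle therefore gives $\lambda_{k}(M,\mu)\le(k+1)/t$. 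Inserting the Gershgorin threshold $t$ of order $1/(k^{2}(h_{k}(M,\mu)+\varepsilon)^{2})$ and letting $\varepsilon\to 0$ produces the inequality $(80k^{3})^{-1}\sqrt{\lambda_{k}(M,\mu)}\le h_{k}(M,\mu)$ once the universal constants $\sqrt{2/\pi}$ from Ledoux's bound and the factor from Gershgorin are made explicit.

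The main obstacle I anticipate is the off-diagonal Gram bound. The naive estimate $|G_{ij}|\le C\sqrt{t}\,\mu^{+}(A_{j})/\sqrt{\mu(A_{i})\mu(A_{j})}$ carries an unwanted factor $\sqrt{\mu(A_{j})/\mu(A_{i})}$ which can blow up when the sets have very different masses; it is the symmetrization via self-adjointness of $P_{2t}$, allowing one to apply Ledoux's bound on the side with smaller perimeter, that removes this dependence and produces a uniform bound in terms of $h_{k}(M,\mu)+\varepsilon$. A secondary technicality is that Ledoux's $L^{1}$ inequality is most naturally formulated for $C^{1}$ functions and must be extended to BV indicators by approximation, a standard but non-automatic step in the smooth Riemannian setting.
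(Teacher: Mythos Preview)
Your argument is correct and in fact sharper than the paper's, though the route is genuinely different. The paper works in the opposite direction: it fixes $t=4k(k+1)/\lambda_k(M,\mu)$, takes an \emph{arbitrary} disjoint family $A_0,\dots,A_k$, discards the set of largest measure so that the remaining $k$ sets satisfy $\mu(A_i)\le 1/2$, and then runs a dichotomy on whether $\|f_0-\int f_0\,d\mu\|_{L^2}$ dominates $2\|f_0-P_tf_0\|_{L^2}$ for $f_0=\sum_{i<k}a_i\mathbf 1_{A_i}$, the coefficients $a_i$ coming from the Rayleigh quotient at level $\lambda_k$. Case~I is ruled out by the Bakry--Ledoux $L^2$ gradient bound (Corollary~\ref{bakledcor}); Case~II is handled by combining Ledoux's $L^1$ inequality with the pointwise bound $\|P_t\mathbf 1_{A_i}-\mathbf 1_{A_i}\|_{L^2}^2\le\|P_t\mathbf 1_{A_i}-\mathbf 1_{A_i}\|_{L^1}$ and a variance lemma (Claim~\ref{4claim1}) controlling the non-orthogonality of the centred indicators. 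Your approach instead picks near-optimal sets for $h_k$, chooses $t$ in terms of $h_k$, and controls the Gram matrix of $P_tf_i$ directly via Gershgorin, bypassing both the dichotomy and Claim~\ref{4claim1}. Because you keep all $k+1$ sets and spend only a factor $k$ on Gershgorin and a further $k+1$ on the triangle inequality for $\nabla$, your argument actually yields $\sqrt{\lambda_k}\lesssim (k+1)^{3/2}h_k$, an improvement over the $k^3$ obtained in the paper.

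Two small points. In the off-diagonal estimate the correct move is to apply Ledoux's bound on the side with \emph{smaller measure}, not smaller perimeter: from $\int_{A_i}P_{2t}\mathbf 1_{A_j}\,d\mu=\int_{A_j}P_{2t}\mathbf 1_{A_i}\,d\mu$ you obtain $|G_{ij}|\le C\sqrt{t}\,(h_k+\varepsilon)\sqrt{\mu(A_\ell)/\mu(A_m)}$ for either ordering $(\ell,m)$, and taking $\mu(A_\ell)\le\mu(A_m)$ makes the ratio $\le 1$. And for the gradient bound $\|\nabla P_t f_i\|_{L^2}^2\le 1/(2t)$ you may simply quote Corollary~\ref{bakledcor} (with $p=2$) rather than the monotonicity argument, since $f_i\in L^2$ suffices there.
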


    As a consequence $h_k(M,\mu)$ and $\sqrt{\lambda_k(M,\mu)}$ are
    equivalent up to polynomials of $k$ under assuming the nonnegativity of
    Bakry-\'Emery Ricci curvature.

    \subsection{Application to the stability of eigenvalues of the weighted Laplacian and
  multi-way isoperimetric constants}In \cite[Section 5]{emil1}, E.~Milman
  obtained several stability results of the Cheeger constants on convex
  bodies. We apply Theorems \ref{Mthm} and \ref{Mthm2} to one of his
  theorem in \cite{emil1}.

   For a domain $\Omega $ with $C^2$ boundary in a complete Riemannian manifold,
   we denote by $\eta_k(\Omega)$ the $k$-th eigenvalue of Laplacian with
   Neumann condition.
 \begin{cor}Let $K$, $L$ be two bounded convex domains in
  $\mathbb{R}^n$ and assume that both $K$ and $L$ have $C^2$ boundary. If
  \begin{align*}
   \vol(K\cap L)\geq v_K \vol (K) \text{ and } \vol (K\cap L) \geq v_L
   \vol (L),
   \end{align*}then
  \begin{align*}
   \eta_k(K) \geq  \frac{\exp(-c k)v_K^4}{\{\log (1+1/v_L)\}^2}\eta_k(L),
   \end{align*}and
  \begin{align*}
   h_k(K)\geq \frac{\exp(-c k)v_K^2}{k^3 \log (1+1/v_L)}h_k(L).
   \end{align*}
  where $c>0$ is a universal numeric constant.
  \end{cor}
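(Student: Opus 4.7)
The plan is to bootstrap a stability estimate for the first eigenvalue and the Cheeger constant on convex bodies (due to E.~Milman, \cite[Section~5]{emil1}) to the higher index $k$ by using Theorems \ref{Mthm} and \ref{Mthm2}. A bounded convex domain $\Omega\subset\mathbb{R}^n$ with $C^2$ boundary, endowed with normalized Lebesgue measure, fits into the framework of the theorems of this paper by the remark following Theorem \ref{Mthm}: the Euclidean ambient has zero Bakry-\'Emery Ricci curvature, and convexity of $\partial\Omega$ provides nonnegative second fundamental form, so Neumann eigenvalues and Cheeger-type constants behave as for the closed nonnegatively curved case.

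The single convex-body input is one of E.~Milman's stability theorems from \cite{emil1}, which under the hypotheses $\vol(K\cap L)\geq v_K\vol(K)$ and $\vol(K\cap L)\geq v_L\vol(L)$ yields a comparison of the form
\[
 h_1(K)\;\gtrsim\;\frac{v_K^2}{\log(1+1/v_L)}\,h_1(L). \qquad(\star)
\]
Everything else is already in the present paper.

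For the isoperimetric part, apply Theorem \ref{Mthm2} to $L$ to get $h_k(L)\leq k^3\exp(ck)h_1(L)$, use the trivial monotonicity $h_k(K)\geq h_1(K)$, and then $(\star)$, producing
\[
 h_k(K)\;\geq\;h_1(K)\;\gtrsim\;\frac{v_K^2}{\log(1+1/v_L)}\,h_1(L)\;\geq\;\frac{v_K^2}{k^3\exp(ck)\log(1+1/v_L)}\,h_k(L),
\]
which is the claimed bound (up to renaming the universal constant). For the eigenvalue part, I chain four inequalities: Theorem \ref{Mthm} on $L$ gives $\eta_k(L)\leq\exp(ck)\eta_1(L)$; the Buser-Ledoux inequality (\ref{busled}) on $L$ gives $\eta_1(L)\lesssim h_1(L)^2$; squaring $(\star)$ gives $h_1(L)^2\lesssim (\log(1+1/v_L))^2 v_K^{-4}\,h_1(K)^2$; and the Cheeger-Maz'ja inequality (\ref{chemazs}) on $K$ together with $\eta_1(K)\leq \eta_k(K)$ gives $h_1(K)^2\leq 4\eta_k(K)$. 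Concatenating these four steps yields
\[
 \eta_k(L)\;\lesssim\;\frac{\exp(ck)\,(\log(1+1/v_L))^2}{v_K^4}\,\eta_k(K),
\]
which rearranges to the stated inequality.

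The main obstacle is not analytic but bookkeeping: one has to identify which of the several stability results in \cite[Section~5]{emil1} plays the role of $(\star)$, verify that its hypotheses are precisely $\vol(K\cap L)\geq v_K\vol(K)$ and $\vol(K\cap L)\geq v_L\vol(L)$, and confirm that the resulting dependence on $v_K$ and $v_L$ is exactly the $v_K^{2}/\log(1+1/v_L)$ weight needed to produce the exponents $v_K^4$ and $(\log(1+1/v_L))^2$ (respectively $v_K^2$ and $\log(1+1/v_L)$) in the final statement. Once this is matched, the rest is a short concatenation of inequalities already collected in the paper.
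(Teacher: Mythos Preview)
Your proposal is correct and follows essentially the same route as the paper: cite E.~Milman's $k=1$ stability result \cite[Theorem 1.7]{emil1}, then use monotonicity $\eta_k(K)\geq\eta_1(K)$, $h_k(K)\geq h_1(K)$ on $K$ together with Theorems \ref{Mthm} and \ref{Mthm2} on $L$. The only cosmetic difference is that the paper quotes Milman's theorem directly for both $h_1$ and $\eta_1$, whereas you quote only the $h_1$ version $(\star)$ and recover the $\eta_1$ comparison via Buser--Ledoux and Cheeger--Maz'ja; since $h_1$ and $\sqrt{\eta_1}$ are universally equivalent under nonnegative curvature, this is the same argument with one layer unpacked.
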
In particular, if $\vol(K) \simeq \vol(L)\simeq \vol(K\cap L)$
  then $\eta_k(K)\simeq_k \eta_k(L)$ and $h_k(K)\simeq_k h_k(L)$. Here
 $A\simeq B$ (resp., $A\simeq_k B$) stands for $A$ and $B$ are
 equivalent up to universal numeric constants (resp., constants
 depending only on $k$).

 E.~Milman obtained the above corollary for $k=1$ (\cite[Theorem 1.7]{emil1}). The above
 corollary follows from his theorem together with Theorems \ref{Mthm}
 and \ref{Mthm2}.

In the same spirit we investigate the (rough) stability property of
eigenvalues of the weighted Laplacian and multi-way isoperimetric constants with respect to
perturbation of spaces (Section \ref{stabilitysection}). We discuss the case where two weighted
manifolds $M$ and $N$ of nonnegative Bakry-\'Emery Ricci curvature are close with respect to the concentration
topology introduced by Gromov in \cite{gromov}. Roughly speaking, the
two spaces $M$ and $N$ are close with respect to the concentration
topology if $1$-Lipschitz functions on $M$ are close to
those on $N$ in some sense.

 \subsection{Organization of the paper}
 Section 2 collects
 some back ground material. In
 Section 3, after explaining some basics of the theory of optimal transportation, we prove
 Theorem \ref{Mthm}. In Section 4, we prove Theorem \ref{extthm}. In
 Section 5 we study the (rough) stability property of eigenvalues of
 the weighted Laplacian and multi-way isoperimetric constants with
 respect to the concentration topology. In Section 6 we discuss several questions concerning this
 paper and some conjecture raised in \cite{funa6}. 
\section{Preliminaries}
We review some basics needed in the proof of the main theorems.
\subsection{Concentration of measure}\label{elck}


In this subsection we explain the known relation among the 1st
eigenvalue of the weighted Laplacian, the Cheeger constant, and the concentration
of measure in the sense of L\'evy and V.~Milman (\cite{levy}, \cite{mil2}).
 
Let $X$ be an \emph{mm-space}, i.e.,
a complete separable metric space with a Borel probability measure $\mu_X$.

\begin{dfn}[{Concentration function,~\cite{amil}}]\label{intdi}
  For $r>0$ we define the real number $\alpha_X(r)$ as the
  supremum of $\mu_X ( X\setminus  O_r(A) )$, where $A$ runs over all
  Borel subsets of $X$ such that $\mu_X(A)\geq 1/2$. The function
  $\alpha_X:(\,0,+\infty\,)\to \mathbb{R}$ is called
  the \emph{concentration function}.
\end{dfn}

\begin{lem}[{\cite{amil},~\cite[Lemma 1.1]{ledoux}}]\label{2.1l1}
  If $\mu_X(A)\geq \kappa>0$, then
  \begin{align*}
    \mu_X(X\setminus O_{r+r_0}(A))\leq \alpha_X(r)
  \end{align*}
  for any $r,r_0>0$ such that $\alpha_X(r_0)<\kappa$. 
\end{lem}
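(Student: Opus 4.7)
The plan is to use the $r_0$-enlargement $A':=O_{r_0}(A)$ as an intermediate set and feed it directly into the defining inequality of the concentration function. If I can show that $\mu_X(A')\geq 1/2$, then Definition \ref{intdi} immediately gives $\mu_X(X\setminus O_r(A'))\leq\alpha_X(r)$, and the triangle-inequality relation $O_r(A')=O_r(O_{r_0}(A))\subseteq O_{r+r_0}(A)$ upgrades this to the desired bound on $\mu_X(X\setminus O_{r+r_0}(A))$.

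Hence the only real task is verifying $\mu_X(A')\geq 1/2$, which I would carry out by contradiction. Suppose on the contrary that $\mu_X(X\setminus A')>1/2$, and set $B:=X\setminus A'=\{\,y\in X:\dist(y,A)\geq r_0\,\}$, so that $\mu_X(B)\geq 1/2$. Applying Definition \ref{intdi} to $B$ yields
\begin{align*}
\mu_X(X\setminus O_{r_0}(B))\leq \alpha_X(r_0)<\kappa.
\end{align*}
On the other hand, if some point $x\in A$ also lay in $O_{r_0}(B)$, then there would exist $y\in B$ with $\dist(x,y)<r_0$, contradicting $\dist(y,A)\geq r_0$. Thus $A\subseteq X\setminus O_{r_0}(B)$, which forces $\mu_X(A)<\kappa$, in contradiction with the hypothesis $\mu_X(A)\geq\kappa$.

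Once $\mu_X(A')\geq 1/2$ is secured, the chain described in the first paragraph closes the argument. I do not anticipate any genuine obstacle: the proof is purely formal and rests only on monotonicity of $\mu_X$, the definition of the concentration function, and the triangle-inequality inclusion $O_r(O_{r_0}(A))\subseteq O_{r+r_0}(A)$. The one point requiring mild care is that the hypothesis $\alpha_X(r_0)<\kappa$ is \emph{strict}, which is precisely what the contradiction step in the middle paragraph needs in order to yield $\mu_X(A)<\kappa$ rather than only $\mu_X(A)\leq\kappa$.
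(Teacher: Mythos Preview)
Your argument is correct and is precisely the standard proof: the paper itself does not reproduce a proof of this lemma but simply cites \cite{amil} and \cite[Lemma~1.1]{ledoux}, where exactly this two-step argument (first $\mu_X(O_{r_0}(A))\ge 1/2$ by contradiction, then apply the definition of $\alpha_X$) appears. Nothing needs to be added.
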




The following Gromov and V. Milman's theorem asserts that Poincar\'e
inequalities imply appropriate exponential concentration inequalities (\cite{milgro}, \cite[Theorem 3.1]{ledoux}).

\begin{thm}[{\cite{milgro}}]\label{2.1t1}
  Let $(M,\mu)$ be a closed weighted Riemannian manifold. Then we have
  \begin{align*}
    \alpha_{(M,\mu)}(r)\leq \exp(-\sqrt{\lambda_1(M,\mu)}r/3)
  \end{align*}
  for any $r>0$. In particular, we have
 \begin{align}\label{expche}
  \alpha_{(M,\mu)}(r)\leq \exp(-h_1(M,\mu)r/6)
  \end{align}for any $r>0$.
\end{thm}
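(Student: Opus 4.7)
The plan is to prove the first inequality via a Poincar\'e-based exponential moment bound for $1$-Lipschitz functions, and then to deduce the second from the first using the Cheeger-Maz'ja inequality (\ref{chemazs}).

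First, I would reduce the concentration estimate to bounding the exponential moments of a $1$-Lipschitz function. Fix a Borel set $A \subset M$ with $\mu(A) \geq 1/2$ and set $f(x) := d(x,A)$. Then $f$ is $1$-Lipschitz, has median $0$, and satisfies $\{f \geq r\} = M \setminus O_r(A)$. Applying the Poincar\'e inequality $\lambda_1(M,\mu)\,\mathrm{Var}_\mu(u) \leq \int_M |\nabla u|^2\,d\mu$ to the test function $u := e^{tf/2}$ and using $|\nabla f| \leq 1$ pointwise yields the doubling inequality
$$\Bigl(1 - \frac{t^2}{4\lambda_1(M,\mu)}\Bigr)\, Z(t) \leq Z(t/2)^2, \qquad Z(s) := \int_M e^{sf}\,d\mu,$$
valid for $0 < t < 2\sqrt{\lambda_1(M,\mu)}$. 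This is the Aida-Masuda-Shigekawa scheme underlying the original Gromov-V.\ Milman argument.

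Next, I would iterate this inequality $n$ times. As $n \to \infty$, the boundary factor $Z(t/2^n)^{2^n}$ tends to $\exp\bigl(t \int_M f\,d\mu\bigr)$, while the accumulated product of denominators converges provided $t < 2\sqrt{\lambda_1(M,\mu)}$, yielding an explicit finite bound on $Z(t)$. Markov's inequality then gives $\mu(f \geq r) \leq e^{-tr} Z(t)$; optimizing $t$ as a suitable fraction of $\sqrt{\lambda_1(M,\mu)}$ below the critical threshold produces the exponential rate claimed in the first inequality.

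The second inequality is then immediate from the first: by the Cheeger-Maz'ja inequality (\ref{chemazs}) one has $h_1(M,\mu) \leq 2\sqrt{\lambda_1(M,\mu)}$, so $\sqrt{\lambda_1(M,\mu)}/3 \geq h_1(M,\mu)/6$, and substituting into the first bound yields the second. The main obstacle is to extract the specific numeric constant $1/3$ cleanly. A naive bookkeeping of the infinite product only yields some absolute $c \in (0,2)$; sharpening to $c = 1/3$ requires either a careful choice of $t$ close to the threshold $2\sqrt{\lambda_1(M,\mu)}$, or replacing the iteration by the differential inequality for $u(t) := \log Z(t) - t\int_M f\,d\mu$ derived from the same application of Poincar\'e, together with a controlled comparison between the mean and the median of $f$ (the latter being zero and absorbable into the exponent).
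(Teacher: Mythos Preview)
Your deduction of the second inequality from the first via the Cheeger--Maz'ja inequality (\ref{chemazs}) is exactly what the paper does; that is the entirety of the paper's argument here. The paper does \emph{not} prove the first inequality at all---it simply cites it from \cite{milgro} and \cite[Theorem~3.1]{ledoux}.

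Your sketch of the first inequality is the standard Gromov--V.~Milman / Aida--Masuda--Shigekawa argument (the one carried out in \cite[Theorem~3.1]{ledoux}), so you are effectively reproducing the proof in the cited reference rather than diverging from the paper. The outline is correct: apply Poincar\'e to $e^{tf/2}$, iterate the resulting doubling inequality for the Laplace transform, and conclude by Markov. Your caveat about the constant is honest and accurate---the raw iteration yields some absolute $c>0$, and pinning down $1/3$ requires the more careful bookkeeping done in \cite{ledoux} (or the differential-inequality variant you mention). Since the paper only needs \emph{some} universal constant downstream, this is not a genuine gap for the purposes of the present paper.
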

The second statement (\ref{expche}) follows from the first statement together with the
Cheeger-Maz'ja inequality (\ref{chemazs}).

\begin{rem}\upshape Integrating
 Cheeger's linear isoperimetric inequality also implies the second
 inequality (\ref{expche}) (see \cite[Proposition 1.7]{milso}). 
 \end{rem}

In the series of works \cite{emil2,emil3,emil1}, E.~Milman obtained the
converse of Theorem \ref{2.1t1} under assuming the nonnegativity of Bakry-\'Emery
Ricci curvature. He proved that a uniform tail-decay of the
concentration function implies the linear isoperimetric inequality
(Cheeger's isoperimetric inequality) under assuming nonnegativity of Bakry-\'Emery
Ricci curvature. E.~Milman's theorem plays a key role in the
proof of the main theorems.

For an weighted Riemannian manifold
$(M,\mu)$, we define the (\emph{infinite-dimensional})
\emph{Bakry-\'Emery Ricci curvature tensor} as
\begin{align*}
  \ric_\mu := \ric_M + \hess \psi.
\end{align*}
\begin{thm}[{E.~Milman, \cite[Theorem 2.1]{emil3}}]\label{2.1t2}
  Let $(M,\mu)$ be a closed weighted Riemannian manifold of nonnegative
  Bakry-\'{E}mery Ricci curvature. If $\alpha_{(M,\mu)}(r)\leq \kappa$
 for some $r>0$ and $\kappa\in (\, 0,1/2\, )$, then
 \begin{align*}
  h_1(M,\mu)\geq \frac{1-2\kappa}{r}.
  \end{align*}In particular, we have
 \begin{align*}
  \lambda_1(M,\mu)\geq \Big(\frac{1-2\kappa}{2r}\Big)^2.
  \end{align*}
\end{thm}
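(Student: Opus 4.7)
The plan is to establish the Cheeger-type bound $h_1(M,\mu)\geq (1-2\kappa)/r$ first; the eigenvalue estimate is then immediate from the Cheeger-Maz'ja inequality \eqref{chemazs} in the form $\lambda_1\geq (h_1/2)^2$. A preliminary reduction identifies $h_1(M,\mu)$ with the classical Cheeger constant: for disjoint Borel $A_0,A_1$ the smaller piece, say $A_0$, satisfies $\mu(A_0)\leq 1/2$, so $\max_i \mu^+(A_i)/\mu(A_i)\geq \mu^+(A_0)/\mu(A_0)$; conversely, pairing a set $A$ with $\mu(A)\leq 1/2$ against $A^c$ and using $\mu^+(A^c)=\mu^+(A)$ with $\mu(A^c)\geq \mu(A)$ yields equality. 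Thus the task reduces to showing $\mu^+(A)\geq \frac{1-2\kappa}{r}\mu(A)$ for every Borel $A$ with $0<\mu(A)\leq 1/2$.

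Fix such a set $A$. The engine is the heat semigroup $P_t=e^{-t\triangle_\mu}$ on $L^2(M,\mu)$. The hypothesis $\ric_\mu\geq 0$ delivers Bakry's pointwise gradient commutation $|\nabla P_tf|\leq P_t|\nabla f|$, from which I extract two quantitative consequences used in tandem. First, an $L^1$ smoothing estimate
\begin{equation*}
\|P_t 1_A-1_A\|_{L^1(\mu)}\;\leq\; C\sqrt{t}\,\mu^+(A),
\end{equation*}
obtained à la Buser-Ledoux by differentiating $s\mapsto \|P_s1_A-1_A\|_{L^1}$ and testing against $\mathrm{sign}(P_s1_A-1_A)$, then invoking the commutation to convert the Dirichlet form into a boundary measure in the limit. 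Second, a Lipschitz regularization
\begin{equation*}
\rlip(P_t 1_A)\;\leq\; C/\sqrt{t},
\end{equation*}
which likewise follows from the commutation combined with the uniform bound $\|1_A\|_\infty\leq 1$.

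Now set $g_t:=P_t1_A$ and feed $g_t$ into the concentration hypothesis. Since $g_t$ is $L_t$-Lipschitz with $L_t\leq C/\sqrt{t}$, a median $m_t$ of $g_t$ satisfies $\mu(\{g_t\geq m_t+rL_t\})\leq\kappa$ and symmetrically from below, so $g_t$ lies in a band of width $\leq 2rL_t$ on a set of measure at least $1-2\kappa$. Combining this with the mass-preservation identity $\int g_t\,d\mu=\mu(A)$ and the $L^1$ smoothing bound, a direct computation of $\int |1_A-g_t|\,d\mu$ (which sees the jump of $1_A$ across the narrow transition region of $g_t$) yields an inequality of the form
\begin{equation*}
(1-2\kappa)\,\mu(A)(1-\mu(A))\;\leq\; C'\,r\,L_t^{-1}\cdot \sqrt{t}\,\mu^+(A).
\end{equation*}
Since $L_t^{-1}\cdot\sqrt{t}\lesssim t$ and $t$ can be optimized, the correct choice $t\asymp r^2$ matches the concentration scale to $r$ and produces $\mu^+(A)\geq \frac{1-2\kappa}{r}\mu(A)$.

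The principal obstacle is the \emph{constant}: the statement is sharp in $\kappa$, and each of the three inputs (the $L^1$ smoothing, the Lipschitz regularization, and the concentration band estimate) carries its own absolute constant. Getting $(1-2\kappa)/r$ with no dimensional factor requires a careful optimization of $t$ together with a refined version of the Bakry-Ledoux semigroup inequalities; a naive accounting loses a multiplicative constant. As the excerpt's counterexamples indicate, nonnegativity of $\ric_\mu$ is essential — both the gradient commutation and consequently both quantitative consequences fail without it.
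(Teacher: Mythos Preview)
The paper does not supply its own proof of this theorem; it is quoted from E.~Milman \cite{emil3}, and the surrounding text only records that the key ingredient of Milman's argument is the \emph{concavity of the isoperimetric profile} $v\mapsto I(v):=\inf\{\mu^+(A):\mu(A)=v\}$ under $\ric_\mu\geq 0$ (a fact from the regularity theory of isoperimetric minimizers), with Ledoux's heat-semigroup route mentioned as an alternative. Your proposal follows the second route in spirit, which is legitimate, but what you have written is not a proof.

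There is a genuine gap at the decisive step. The displayed inequality
\[
(1-2\kappa)\,\mu(A)(1-\mu(A))\;\leq\; C'\,r\,L_t^{-1}\cdot \sqrt{t}\,\mu^+(A)
\]
is asserted as the output of a ``direct computation'' that is never carried out, and it is not even dimensionally consistent with what you deduce from it: with $L_t\asymp 1/\sqrt{t}$ one has $rL_t^{-1}\sqrt{t}\asymp r t$, so choosing $t\asymp r^2$ gives a right-hand side of order $r^3\mu^+(A)$, not $r\,\mu^+(A)$. More importantly, you yourself concede in the final paragraph that the argument only yields $h_1\geq c(1-2\kappa)/r$ for some unspecified absolute $c$; but the theorem you are asked to prove has the \emph{specific} constant $(1-2\kappa)/r$, and a bound off by a multiplicative constant does not establish it. Milman's argument gets the stated constant precisely because concavity of $I$, together with $I(0)=0$ and the symmetry $I(v)=I(1-v)$, linearizes the problem: it suffices to bound $I$ at a single point, and the concentration hypothesis $\alpha(r)\leq\kappa$ supplies that bound with no loss. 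A semigroup proof with the correct constant is possible but requires substantially more care than your sketch indicates; the ``direct computation'' you defer is exactly where the work is.
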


 The key ingredient of E. Milman's approach to the above result is the
 concavity of isoperimetric profile under the assumption of the nonnegativity of
 Bakry-\'Emery Ricci
 curvature, the fact based on the regularity theory of isoperimetric
 minimizers (see \cite[Appendix]{emil2}). See also \cite{ledoux} for the
 heat semigroup approach to Theorem \ref{2.1t2}.


\subsection{Separation distance}\label{sepsec}We define the separation distance which
 plays an important role when treating eigenvalues of the weighted
 Laplacian. The separation distance was introduced by Gromov in \cite{gromov}. 

\begin{dfn}[Separation distance]\label{2.2d1}
  For any $\kappa_0,\kappa_1,\cdots,\kappa_k\geq 0$ with $k\geq 1$,
  we define the ($k$-)\emph{separation distance}
  $\sep(X;\kappa_0,\kappa_1, \cdots, \kappa_k)$ of $X$
  as the supremum of $\min_{i\neq j}\dist_X(A_i,A_j)$,
  where $A_0,A_1, \cdots, A_k$ are
  any Borel subsets of $X$ satisfying that $\mu_X(A_i)\geq \kappa_i$ for
  all $i=0,1,\cdots,k$.
\end{dfn}It is immediate from the definition that if $\kappa_i\geq
  \tilde{\kappa_i}$ for each $i=0,1,\cdots,k$, then
  \begin{align*}
   \sep(X;\kappa_0,\kappa_1,\cdots,\kappa_k)\leq \sep(X;\tilde{\kappa}_0,\tilde{\kappa}_1,\cdots,\tilde{\kappa}_k).
   \end{align*}Note that if the support of $\mu_X$ is connected, then
 \begin{align*}\sep(X;\kappa_0, \kappa_1, \cdots, \kappa_k)=0
  \end{align*}for any $\kappa_0,\kappa_1,\cdots, \kappa_k>0$ such that
  $\sum_{i=0}^k \kappa_i>1$.

   For a Borel subset $A$ of an mm-space $X$ we put
   \begin{align*}
    \mu_A:= \frac{\mu_X|_A}{\mu_X(A)}
    \end{align*}
   \begin{lem}\label{2.2l1}If $A$ satisfies $\mu_X(A)\geq \kappa$, then
    \begin{align*}
     \sep((A,\mu_A);\kappa_0,\kappa_1,\cdots,\kappa_k)\leq
     \sep(X;\kappa\kappa_0, \kappa\kappa_1,\cdots, \kappa\kappa_k)
     \end{align*}for any $\kappa_0,\kappa_1,\cdots, \kappa_k>0$.
    \begin{proof}Take $k+1$ Borel subsets $A_0,A_1,\cdots, A_k$ of $A$
     such that $\mu_A(A_i)\geq \kappa_i$ for any $i$. The lemma
     immediately follows from that $\mu_X(A_i)\geq \mu_X(A)\kappa_i\geq \kappa\kappa_i$.
     \end{proof}
    \end{lem}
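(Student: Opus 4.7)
The plan is to unwind both sides of the inequality and transfer any admissible tuple for the left-hand side to an admissible tuple for the right-hand side, noting that the minimal pairwise distance is unchanged. This is essentially a one-line argument once one observes how $\mu_A$ relates to $\mu_X$.

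Concretely, I would begin by fixing an arbitrary collection $A_0, A_1, \ldots, A_k$ of Borel subsets of $A$ satisfying $\mu_A(A_i)\geq \kappa_i$ for every $i$. Since the metric and Borel structure on $A$ are inherited from $X$, each $A_i$ is at the same time a Borel subset of $X$. The key computation is the measure renormalization: by the very definition $\mu_A = \mu_X|_A/\mu_X(A)$ we have
\begin{equation*}
\mu_X(A_i) \;=\; \mu_X(A)\,\mu_A(A_i) \;\geq\; \kappa\,\kappa_i,
\end{equation*}
so the tuple $(A_0,\ldots,A_k)$ is also admissible on the right-hand side, namely for $\sep(X;\kappa\kappa_0,\ldots,\kappa\kappa_k)$.

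To finish, I would note that distances in $A$ coincide with distances in $X$ (the metric on $A$ being the restriction), so
\begin{equation*}
\min_{i\neq j}\dist_A(A_i,A_j) \;=\; \min_{i\neq j}\dist_X(A_i,A_j) \;\leq\; \sep(X;\kappa\kappa_0,\kappa\kappa_1,\ldots,\kappa\kappa_k).
\end{equation*}
Passing to the supremum over all admissible tuples on the left-hand side gives the lemma. There is essentially no obstacle here; the only point worth keeping in mind is the correct rescaling factor $\mu_X(A)\geq \kappa$ when passing from the conditional measure $\mu_A$ back to the ambient measure $\mu_X$.
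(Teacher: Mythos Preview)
Your proof is correct and follows exactly the same approach as the paper: pick an admissible tuple for $(A,\mu_A)$, use $\mu_X(A_i)\geq \mu_X(A)\kappa_i\geq \kappa\kappa_i$ to see it is admissible for $X$, and conclude. You have simply made explicit the step about distances coinciding and the final passage to the supremum, which the paper leaves implicit.
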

    We denote the closed $r$-neighborhood of a subset $A$ in
    a metric space by $C_r(A)$.
   \begin{lem}\label{2.2l2}Let $X$ be an mm-space and put $r:= \sep (X, \kappa_0,\kappa_1,\cdots,
    \kappa_{k})$. Assume that $k$ Borel subsets $A_0,A_1,\cdots,
    A_{k-1}$ of $X$
    satisfy $\mu_X(A_i)\geq \kappa_i$ for every $i=0,1,\cdots, k-1$ and
    $\dist_X(A_i,A_j)>r$ for every $i\neq j$. Then
    we have
    \begin{align*}
     \mu_X\Big(\bigcup_{i=0}^{k-1} C_{r} (A_i) \Big)\geq 1-\kappa_k.
     \end{align*}
    \begin{proof}
     Suppose that for some $\e_0>0$,
     \begin{align*}
      \mu_X\Big(\bigcup_{i=0}^{k-1} C_{r+\e_0} (A_i) \Big)\leq 1-\kappa_k.
      \end{align*}Putting $A_k:=X\setminus \bigcup_{i=0}^{k-1}
     C_{r+\e_0} (A_i)$ we have $\mu_X(A_k)\geq \kappa_k$ and $\dist_X(A_k,A_i)\geq r+\e_0$ for any
     $i=0,1,\cdots,k-1$. Thus we get
     \begin{align*}
      r<\min_{i\neq j} \dist_X(A_i,A_j)\leq
      \sep(X;\kappa_0,\kappa_1,\cdots, \kappa_k)=r,
      \end{align*}which is a contradiction. Hence $
     \mu_X(\bigcup_{i=0}^{k-1} C_{r+\e} (A_i) )> 1-\kappa_k$
     for any $\e>0$. Letting $\e\to 0$ we obtain the conclusion.
     \end{proof}
    \end{lem}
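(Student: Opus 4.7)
The plan is to argue by contradiction, exploiting the fact that if the closed $r$-neighborhoods of $A_0,\ldots,A_{k-1}$ fail to cover a set of measure at least $1-\kappa_k$, then their complement can be used to manufacture a $(k+1)$-st subset $A_k$ that violates the definition of $r$ as the $k$-separation distance.

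More precisely, I would first reduce the target inequality to showing that, for every $\varepsilon>0$, the slightly enlarged union $\bigcup_{i=0}^{k-1} C_{r+\varepsilon}(A_i)$ has measure strictly greater than $1-\kappa_k$. The reason for this enlargement is technical but crucial: writing $A_k$ as a complement of $r$-neighborhoods only yields $\dist_X(A_k,A_i)\geq r$, whereas the definition of $\sep$ requires a strict separation to produce a contradiction. Working with $(r+\varepsilon)$-neighborhoods gives the needed strict inequality $\dist_X(A_k,A_i)\geq r+\varepsilon>r$ for free. Assuming by contradiction that $\mu_X\bigl(\bigcup_{i=0}^{k-1}C_{r+\varepsilon_0}(A_i)\bigr)\leq 1-\kappa_k$ for some $\varepsilon_0>0$, I would set $A_k:=X\setminus \bigcup_{i=0}^{k-1} C_{r+\varepsilon_0}(A_i)$ and verify the two conditions $\mu_X(A_k)\geq \kappa_k$ and $\dist_X(A_k,A_i)\geq r+\varepsilon_0$. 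Combining this with the hypothesis $\dist_X(A_i,A_j)>r$ for $i\neq j$ in $\{0,\ldots,k-1\}$, the family $A_0,\ldots,A_k$ would then be an admissible collection for $\sep(X;\kappa_0,\ldots,\kappa_k)$ with pairwise distances strictly exceeding $r$, contradicting $r=\sep(X;\kappa_0,\ldots,\kappa_k)$.

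Having thus shown $\mu_X\bigl(\bigcup_{i=0}^{k-1}C_{r+\varepsilon}(A_i)\bigr)>1-\kappa_k$ for every $\varepsilon>0$, I would let $\varepsilon\downarrow 0$. Since each $C_{r+\varepsilon}(A_i)$ is closed and $\bigcap_{\varepsilon>0}C_{r+\varepsilon}(A_i)=C_r(A_i)$, the finite unions decrease to $\bigcup_{i=0}^{k-1}C_r(A_i)$, so continuity of measure from above yields the desired inequality $\mu_X\bigl(\bigcup_{i=0}^{k-1}C_r(A_i)\bigr)\geq 1-\kappa_k$.

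The only real subtlety is the issue flagged above, namely ensuring strict separation of $A_k$ from the other $A_i$'s. A naive attempt to set $A_k=X\setminus \bigcup C_r(A_i)$ directly would only produce $\dist_X(A_k,A_i)\geq r$, which is compatible with $\min_{i\neq j}\dist_X(A_i,A_j)=r$ and therefore fails to contradict the definition of $\sep$; the $\varepsilon_0$-thickening is what makes the argument go through, with the final passage to the limit recovering the sharp statement.
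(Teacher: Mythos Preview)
Your proof is correct and follows essentially the same route as the paper's: both argue by contradiction with the $\varepsilon_0$-thickened neighborhoods, define $A_k$ as the complement, derive $\min_{i\neq j}\dist_X(A_i,A_j)>r=\sep(X;\kappa_0,\ldots,\kappa_k)$, and then let $\varepsilon\to 0$. Your write-up is in fact slightly more explicit than the paper's about why the thickening is needed and about the continuity-from-above step in the limit.
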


    The following lemma asserts that exponential concentration
    inequalities and logarithmic 2-separation inequalities are equivalent:

\begin{lem}\label{2.2l3}Let $X$ be an mm-space. 
 \begin{enumerate}
            \item If $X$ satisfies
                  \begin{align}\label{2.2s1}
                   \sep (X;\kappa,\kappa)\leq \frac{1}{C}\log \frac{c}{\kappa}\end{align}for any $\kappa>0$,
                  then we have $\alpha_X(r)\leq c \exp (-C r)$ for
                  any $r>0$.
           \item Conversely, if $X$ satisfies $\alpha_X(r)\leq c'\exp(-C' r)$ for any
                 $r>0$, then we have
                 \begin{align*}
                  \sep (X;\kappa,\kappa )\leq \frac{2}{C'}\log \frac{c'}{\kappa}
                  \end{align*}for any $\kappa>0$.
 \end{enumerate}
 \begin{proof}(1) Assume that $X$ satisfies (\ref{2.2s1}) and let $A\subseteq
  X$ be a Borel subset such that $\mu_X(A)\geq 1/2$. For $r>0$ we put
  $\kappa:=\mu_X(X\setminus O_r(A))$. Since
  \begin{align*}
   r\leq \dist_X(X\setminus O_r(A), A)\leq \sep(X;\kappa,1/2)\leq
   \sep(X;\kappa,\kappa)\leq \frac{1}{C}\log \frac{c}{\kappa},
   \end{align*}we have $\kappa\leq c\log (-Cr)$, which gives the
  conclusion of (1).

  (2) Assuming that $\alpha_X(r)\leq c'\exp(-C'r)$, we take two Borel subsets
  $A,B\subseteq X$ such that $\mu_X(A)\geq \kappa$, $\mu_X(B)\geq
  \kappa$, and $\dist_X(A,B)=\sep(X;\kappa,\kappa)$. Let $\tilde{r}$ be
  any positive number satisfying
  \begin{align*}
  \alpha_X(\tilde{r})\leq c'\exp(-C'\tilde{r})<\kappa,
   \end{align*}i.e.,
  \begin{align*}
   \tilde{r}>\frac{1}{C'}\log \frac{c'}{\kappa}.
   \end{align*}Since $\mu_X(A)\geq \kappa$, by Lemma \ref{2.1l1}, we have
  \begin{align*}
   1-\mu_X(O_{2\tilde{r}}(A))\leq \alpha_X(\tilde{r})<\kappa.
   \end{align*}Hence we have
  \begin{align*}
   \mu_X(O_{2\tilde{r}}(A)\cap B)> (1-\kappa)+\kappa - 1=0,
   \end{align*}which yields $\sep(X;\kappa,\kappa)=\dist_X(A,B)\leq
  2\tilde{r}$. Letting $\tilde{r}\to C'^{-1}\log (c'/\kappa)$ we obtain (2).
  \end{proof}
 \end{lem}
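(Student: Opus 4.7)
For part (1), my plan is to feed the far complement of a median-mass set directly into the separation hypothesis. Given any Borel set $A$ with $\mu_X(A)\ge 1/2$ and any $r>0$, set $B:=X\setminus O_r(A)$ and let $\kappa:=\mu_X(B)$. Since $A$ and $B$ are disjoint, automatically $\kappa\le 1/2$, and by construction $\dist_X(A,B)\ge r$. Because $\mu_X(A)\ge 1/2\ge \kappa$ and $\mu_X(B)\ge \kappa$, the pair $(A,B)$ is a competitor in $\sep(X;\kappa,1/2)$, so by the monotonicity of $\sep$ in its threshold arguments (noted just after Definition \ref{2.2d1}),
\[
r\le \sep(X;\kappa,1/2)\le \sep(X;\kappa,\kappa)\le C^{-1}\log(c/\kappa).
\]
Rearranging gives $\kappa\le c\exp(-Cr)$, and since $A$ was arbitrary, $\alpha_X(r)\le c\exp(-Cr)$.

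For part (2), the idea is to combine the exponential concentration with Lemma \ref{2.1l1} to bound how far apart two sets of mass $\ge \kappa$ can sit. Fix $\kappa>0$ and, for each $\e>0$, choose Borel sets $A,B\subseteq X$ with $\mu_X(A),\mu_X(B)\ge \kappa$ and $\dist_X(A,B)\ge \sep(X;\kappa,\kappa)-\e$. Pick any $\tilde r>C'^{-1}\log(c'/\kappa)$, so that $\alpha_X(\tilde r)\le c'\exp(-C'\tilde r)<\kappa$. Applying Lemma \ref{2.1l1} to $A$ with $r=r_0=\tilde r$ (permissible because $\alpha_X(\tilde r)<\kappa\le \mu_X(A)$) yields
\[
\mu_X(X\setminus O_{2\tilde r}(A))\le \alpha_X(\tilde r)<\kappa\le \mu_X(B),
\]
so $\mu_X(O_{2\tilde r}(A))+\mu_X(B)>1$, forcing $O_{2\tilde r}(A)\cap B\ne \emptyset$ and hence $\dist_X(A,B)\le 2\tilde r$. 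Letting $\e\downarrow 0$ and then $\tilde r\downarrow C'^{-1}\log(c'/\kappa)$ produces $\sep(X;\kappa,\kappa)\le 2C'^{-1}\log(c'/\kappa)$.

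There is no genuine obstacle in either direction: the lemma is essentially a dictionary translating exponential tails of $\alpha_X$ into logarithmic tails of $\sep(X;\kappa,\kappa)$ and back. The one place to take care is the interplay of strict and non-strict inequalities. In part (2), Lemma \ref{2.1l1} demands the \emph{strict} hypothesis $\alpha_X(r_0)<\kappa$, so $\tilde r$ must be chosen strictly above $C'^{-1}\log(c'/\kappa)$, with the target bound recovered only after passing to the limit; moreover, one must not assume that the supremum in the definition of $\sep$ is attained, which is why I take $\e$-almost-extremizers $A,B$. In part (1), the inequality $\sep(X;\kappa,1/2)\le \sep(X;\kappa,\kappa)$ tacitly uses $\kappa\le 1/2$, which is automatic here since $A$ and $B$ are disjoint. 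Once these bookkeeping points are handled, both implications fall out in a few lines.
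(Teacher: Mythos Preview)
Your proof is correct and follows essentially the same route as the paper's: in (1) you feed the complement $X\setminus O_r(A)$ of a half-mass set into the separation hypothesis via the monotonicity of $\sep$, and in (2) you use Lemma~\ref{2.1l1} with $r=r_0=\tilde r$ to force $O_{2\tilde r}(A)\cap B\neq\emptyset$, then pass to the limit in $\tilde r$. Your version is in fact slightly more careful than the paper's, which tacitly assumes the supremum defining $\sep(X;\kappa,\kappa)$ is attained; your use of $\e$-almost-extremizers removes that gap at no cost.
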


Theorem \ref{2.1t1} together with Lemma \ref{2.2l3} (2) implies that for any closed
weighted Riemannian manifold $(M,\mu)$ we have 
\begin{align}\label{2.2sss1}
  \sep((M,\mu);\kappa,\kappa)
  \leq \frac{6}{\sqrt{\lambda_1(M,\mu)}}\log \frac{1}{\kappa}.
\end{align}

Chung, Grigor'yan, and Yau generalized the above inequality in the
following form:

\begin{thm}[{Chung et al. \cite[Theorem 3.1]{cgy2}}]\label{2.2t4}
  Let $(M,\mu)$ be a closed weighted Riemannian manifold. Then, for
  any $k\in \mathbb{N}$ and any $\kappa_0,\kappa_1,\cdots,\kappa_k>0$,
  we have
 \begin{align*}
  \sep((M,\mu);\kappa_0,\kappa_1,\cdots ,\kappa_k)\leq
  \frac{1}{\sqrt{\lambda_k(M,\mu)}}\max_{i \neq j}
  \log \Big(\frac{e}{\kappa_i\kappa_j}\Big).
  \end{align*}
\end{thm}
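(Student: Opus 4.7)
The plan is to apply the heat semigroup $P_t=e^{-t\triangle_\mu}$ together with a Davies--Grigor'yan Gaussian off-diagonal heat kernel estimate and a rank argument on an associated $(k+1)\times(k+1)$ Gram matrix. The two main analytic ingredients are (i) the Davies-type bound
$\int_A\int_B p_t(x,y)\,d\mu(x)d\mu(y)\leq\sqrt{\mu(A)\mu(B)}\exp(-\dist(A,B)^2/(4t))$
for any disjoint Borel sets $A,B\subseteq M$, and (ii) the spectral identity $\langle P_tf,g\rangle_{L^2(\mu)}=\sum_{\ell\geq 0}e^{-\lambda_\ell(M,\mu) t}\langle f,\phi_\ell\rangle\langle g,\phi_\ell\rangle$, where $\{\phi_\ell\}$ is an $L^2(\mu)$-orthonormal eigenbasis of $\triangle_\mu$ with $\phi_0\equiv 1$ (using that $\mu$ is a probability measure).

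First, I would fix $\varepsilon>0$ and, by the definition of $\sep$, choose pairwise disjoint Borel sets $A_0,\ldots,A_k$ with $\mu(A_i)\geq\kappa_i$ and $\dist(A_i,A_j)\geq r-\varepsilon$ for every $i\neq j$, where $r:=\sep((M,\mu);\kappa_0,\ldots,\kappa_k)$. Setting $f_i=\mathbf{1}_{A_i}$, I consider the rectangular matrix $N_{i,\ell}:=\langle f_i,\phi_\ell\rangle$ with $0\leq i\leq k$ and $0\leq\ell\leq k-1$. Since $N$ has more rows than columns, there exists a nonzero $c\in\mathbb{R}^{k+1}$ in its left kernel; then $F:=\sum_ic_if_i$ is orthogonal in $L^2(\mu)$ to $\mathrm{span}(\phi_0,\ldots,\phi_{k-1})$.

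Second, orthogonality to the first $k$ eigenfunctions combined with the spectral identity yields the spectral-gap bound $\|P_{t/2}F\|_{L^2(\mu)}^2\leq e^{-\lambda_k(M,\mu)t}\|F\|_{L^2(\mu)}^2=e^{-\lambda_k(M,\mu)t}\sum_ic_i^2\mu(A_i)$, where disjointness of the $A_i$ is used in the last equality. On the other hand, expanding the left-hand side as $\sum_{i,j}c_ic_j\langle P_tf_i,f_j\rangle$, I would bound the diagonal from below by $\mu(A_i)^2$ (coming from the $\phi_0\equiv 1$ term in the spectral sum) and the off-diagonal from above by $\sqrt{\mu(A_i)\mu(A_j)}\exp(-(r-\varepsilon)^2/(4t))$ via the Davies estimate.

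Third, to recover the precise form $\max_{i\neq j}\log(e/(\kappa_i\kappa_j))$, I would choose $c$ by cofactor expansion, $c_i=(-1)^i\det N_{\hat\imath}$, where $N_{\hat\imath}$ is the $k\times k$ minor of $N$ obtained by deleting the $i$-th row: this $c$ lies in the kernel automatically, and by the Cauchy--Binet identity its entries are controlled by the measures $\mu(A_i)$. Inserting the two-sided bounds of step two into the spectral-gap inequality and optimising the free parameter $t>0$ then yields $(r-\varepsilon)^2\leq(1/\lambda_k(M,\mu))\bigl(\max_{i\neq j}\log(e/(\kappa_i\kappa_j))\bigr)^2$, after which $\varepsilon\to 0$ concludes. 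The principal obstacle is the combinatorial bookkeeping in step three: a naive Cauchy--Schwarz bound on $\sum_{i\neq j}|c_i||c_j|\sqrt{\mu(A_i)\mu(A_j)}\exp(-(r-\varepsilon)^2/(4t))$ introduces a spurious factor of $k+1$, so the Cauchy--Binet identity and the concentration of $|c|$ on the pair realising $\max_{i\neq j}\log(e/(\kappa_i\kappa_j))$ are essential to isolate the worst pair and obtain the dimension-free and $k$-free logarithmic form in the stated inequality.
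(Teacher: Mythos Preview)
The paper does not prove this theorem; it is quoted from Chung--Grigor'yan--Yau and only the statement is given. The paper does, however, remark later (in the discussion of Alexandrov spaces in Section~6) that the CGY proof rests on exactly the two analytic ingredients you name: the Davies--Gaffney estimate $\int_A\int_B p_t\,d\mu\,d\mu\le\sqrt{\mu(A)\mu(B)}\exp(-\dist(A,B)^2/(4t))$ and the eigenfunction expansion of the heat kernel. So your choice of tools is the right one, and steps one and two of your plan (rank argument to kill the first $k$ eigenmodes, then the two-sided heat-semigroup estimate) are the backbone of the CGY argument.

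The gap is in your step three. You correctly flag that a naive Cauchy--Schwarz on the cross terms costs a factor $k+1$, but the proposed fix --- ``Cauchy--Binet identity and the concentration of $|c|$ on the pair realising $\max_{i\neq j}\log(e/(\kappa_i\kappa_j))$'' --- is not an argument. Cauchy--Binet relates $\det(N^\top N)$ to sums of squared minors; it gives no reason why the cofactor vector $c$ should concentrate on any particular pair of indices, and in general it will not. What the CGY proof actually does is different: rather than working with a single linear combination $F=\sum_i c_i\mathbf{1}_{A_i}$, one splits each $P_{t/2}\mathbf{1}_{A_i}$ into its projection $\alpha_i$ onto $\mathrm{span}(\phi_0,\dots,\phi_{k-1})$ and its orthogonal complement $\beta_i$, uses that the $k+1$ vectors $\alpha_i$ live in a $k$-dimensional space to extract a \emph{single pair} $(i_0,j_0)$ (via a normalization $|c_{i_0}|=\max_i|c_i|$ of the dependence relation), and then applies the Davies bound only to that pair. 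This is how the $\max_{i\neq j}$ appears without any $k$-dependent prefactor. Your sketch would be completed by replacing the vague Cauchy--Binet appeal with this ``pick the index of largest coefficient and compare against the rest'' manoeuvre.
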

Combining Theorems \ref{leethm} and \ref{2.2t4} we obtain the following proposition:
   \begin{prop}\label{leeprop}
    There exists a universal numeric constant $c>0$ such that for all closed weighted Riemannian
   manifold $(M,\mu)$, a natural number $k$, and
    $\kappa_0,\kappa_1,\cdots, \kappa_k>0$, we have
    \begin{align*}
     \sep ((M,\mu);\kappa_0,\kappa_1,\cdots,\kappa_k)\leq \frac{c k^3}{h_k(M,\mu)}\max_{i\neq j}\log
     \frac{e}{\kappa_i \kappa_j}.
     \end{align*}
    \end{prop}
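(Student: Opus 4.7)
The proposition is an essentially immediate combination of the two previously stated results, namely the higher-order Cheeger–Maz'ja inequality of Lee–Gharan–Trevisan (Theorem \ref{leethm}) and the Chung–Grigor'yan–Yau separation bound (Theorem \ref{2.2t4}). My plan is just to chain these two estimates together.

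First, I would invoke Theorem \ref{2.2t4} to control the multi-way separation distance in terms of $\lambda_k(M,\mu)$:
\begin{align*}
\sep((M,\mu);\kappa_0,\kappa_1,\dots,\kappa_k)
\leq \frac{1}{\sqrt{\lambda_k(M,\mu)}}\max_{i\neq j}\log\Big(\frac{e}{\kappa_i\kappa_j}\Big).
\end{align*}
Next, I would apply Theorem \ref{leethm}, which gives a universal constant $c_1>0$ such that $h_k(M,\mu)\leq c_1 k^3 \sqrt{\lambda_k(M,\mu)}$, and rearrange it to
\begin{align*}
\frac{1}{\sqrt{\lambda_k(M,\mu)}}\leq \frac{c_1 k^3}{h_k(M,\mu)}.
\end{align*}

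Substituting the second estimate into the first yields
\begin{align*}
\sep((M,\mu);\kappa_0,\kappa_1,\dots,\kappa_k)
\leq \frac{c_1 k^3}{h_k(M,\mu)}\max_{i\neq j}\log\Big(\frac{e}{\kappa_i\kappa_j}\Big),
\end{align*}
which is precisely the claim with $c:=c_1$. There is no real obstacle here: both input inequalities are available as quoted black boxes, and the proof is a one-line composition. The only care one needs is to ensure that $\lambda_k(M,\mu)>0$ so that the reciprocal makes sense, but this is automatic for any closed weighted Riemannian manifold since the spectrum of $\triangle_\mu$ is discrete and $\lambda_k\geq \lambda_1>0$ for $k\geq 1$ (and for $k=0$ the separation bound is trivial, as one can always pick the $\kappa_0$-subset to be the whole manifold).
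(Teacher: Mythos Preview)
Your proof is correct and takes essentially the same approach as the paper: the paper simply states that the proposition follows by combining Theorems \ref{leethm} and \ref{2.2t4}, which is exactly the chaining you carry out.
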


We end this subsection reformulating Theorem \ref{2.1t2} in terms of the
separation distance for later use. 

\begin{prop}\label{2.2t5}
  Let $(M,\mu)$ be a closed weighted Riemannian manifold
  of nonnegative Bakry-\'Emery Ricci curvature. Then, for any
 $\kappa>0$, we have
 \begin{align*}
    \sep ((M,\mu);\kappa,\kappa)\geq \frac{1-2\kappa}{h_1(M,\mu)}
  \end{align*}and
  \begin{align*}
    \sep ((M,\mu);\kappa,\kappa)\geq \frac{1-2\kappa}{2\sqrt{\lambda_1(M,\mu)}}.
  \end{align*}
 \begin{proof}We prove only the first assertion. The proof of the second
  assertion is identical to the first one. According to Theorem
  \ref{2.1t2}, if $r\in (\, 0,\infty\, )$ and
  $\kappa\in (\, 0,1/2 \, )$ satisfy
  \begin{align}\label{2.2s6}
   r<\frac{1-2\kappa}{h_1(M,\mu)},
   \end{align}then we have $\alpha_{(M,\mu)}(r)>\kappa$. There
  exists $A\subseteq M$ such that $\mu(A)\geq 1/2$ and $\mu(M\setminus
  O_r(A))> \kappa $. Hence we have
  \begin{align*}
  r=\dist_M(A,M\setminus O_r(A)) \leq \sep ((M,\mu);1/2,\kappa)\leq \sep ((M,\mu);\kappa,\kappa).
   \end{align*}Combining the above inequality with (\ref{2.2s6}) gives the conclusion.
  \end{proof}
\end{prop}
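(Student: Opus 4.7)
The plan is to deduce this proposition directly from E.~Milman's theorem (Theorem \ref{2.1t2}) by contraposition, followed by an elementary construction of two sets realizing the desired separation distance. The heavy analytic input (nonnegative Bakry-\'Emery Ricci curvature, regularity of isoperimetric minimizers, concavity of the isoperimetric profile) has already been absorbed into Theorem \ref{2.1t2}, so no new tools should be required here.

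For the first inequality, fix $\kappa \in (\,0,1/2\,)$ (the case $\kappa \geq 1/2$ is trivial since $1-2\kappa \leq 0$) and pick an arbitrary $r$ with
\[
r < \frac{1-2\kappa}{h_1(M,\mu)}.
\]
If one had $\alpha_{(M,\mu)}(r) \leq \kappa$, Theorem \ref{2.1t2} would force $h_1(M,\mu) \geq (1-2\kappa)/r$, contradicting the choice of $r$. Hence $\alpha_{(M,\mu)}(r) > \kappa$, and by definition of the concentration function there is a Borel set $A \subseteq M$ with $\mu(A) \geq 1/2$ and $\mu(M \setminus O_r(A)) > \kappa$. Setting $B := M \setminus O_r(A)$, we have $\mu(A) \geq \kappa$, $\mu(B) \geq \kappa$, and $\dist_M(A,B) \geq r$ because $B$ lies outside the open $r$-neighborhood of $A$. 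Consequently $\sep((M,\mu);\kappa,\kappa) \geq r$, and letting $r$ increase to $(1-2\kappa)/h_1(M,\mu)$ yields the claimed bound.

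For the second inequality I would repeat the argument verbatim, replacing the isoperimetric half of Theorem \ref{2.1t2} with its spectral half: any
\[
r < \frac{1-2\kappa}{2\sqrt{\lambda_1(M,\mu)}}
\]
must satisfy $\alpha_{(M,\mu)}(r) > \kappa$, so the same pair $(A,B)$ constructed above witnesses $\sep((M,\mu);\kappa,\kappa) \geq r$.

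The main---indeed only---obstacle is conceptual rather than technical: one has to recognize that contrapositive use of Theorem \ref{2.1t2} automatically produces \emph{two} disjoint sets of mass at least $\kappa$ at distance at least $r$ (namely $A$ and its far complement $B$), even though the definition of the concentration function privileges one of them to have mass $\geq 1/2$. Once this asymmetry is absorbed into the monotonicity $\sep((M,\mu);1/2,\kappa) \geq \sep((M,\mu);\kappa,\kappa)$ of Definition \ref{2.2d1}, the proposition is a pure repackaging of Theorem \ref{2.1t2} in the language of separation rather than of concentration.
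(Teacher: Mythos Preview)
Your proof is correct and follows the same route as the paper's: contrapose Theorem \ref{2.1t2} to force $\alpha_{(M,\mu)}(r) > \kappa$, extract a set $A$ of mass $\geq 1/2$ whose far complement has mass $> \kappa$, and read off the separation bound. One small slip in your closing commentary: the monotonicity of the separation distance goes the other way, namely $\sep((M,\mu);1/2,\kappa) \leq \sep((M,\mu);\kappa,\kappa)$ (larger mass thresholds yield \emph{smaller} separation), but your main argument already handles this correctly by observing $\mu(A) \geq 1/2 \geq \kappa$ directly, so the typo does not affect the proof.
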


Since $\sep ((M,\mu);\kappa, \kappa)\leq \diam M$, the first inequality
of Theorem \ref{2.2t5} recovers the Li-Yau inequality \cite{liyau}.

\subsection{Three distances between probability measures}
Let $X$ be a complete separable metric space. We denote by $\mathcal{P}(X)$
the set of Borel probability measures on $X$.

 \begin{dfn}[Prohorov distance]\label{2.3d1}
   Given two measures $\mu,\nu \in \mathcal{P}(X)$ and $\lambda \geq 0$,
   we define the \emph{Prohorov distance} $\di_{\lambda}(\mu,\nu)$
   as the infimum of $\varepsilon >0$ such that
  \begin{align}\label{2.3s1}
   \mu ({C}_{\varepsilon}(A))\geq
    \nu(A)-\lambda \varepsilon \text{ and }\nu({C}_{\varepsilon}(A))\geq
    \mu(A)-\lambda \varepsilon
  \end{align}
  for any Borel subsets $A\subseteq X$.
\end{dfn}

For any $\lambda \geq 0$, the function $\di_{\lambda}$ is a complete
separable distance function on $\mathcal{P}(X)$. If $\lambda >0$, then
the topology on $\mathcal{P}(X)$ determined by the Prohorov distance
function $\di_{\lambda}$ coincides with that of the weak convergence
(see \cite[Section 6]{bil}).
The distance functions $\di_{\lambda}$ for all $\lambda > 0$
are equivalent to each other. Also it is known that if $\mu
({C}_{\varepsilon}(A))\geq \nu(A)-\lambda \varepsilon$ for any Borel
subsets $A$ of $X$, then $\di_{\lambda}(\mu,\nu)\leq \e$. In other
words, the second inequality in (\ref{2.3s1}) follows from the first one
(see \cite[Section 6]{bil}).

For $(x,y)\in X\times X$, we put $\pr_1(x,y):=x$ and
$\pr_2(x,y):=y$. For two finite Borel measures $\mu$ and $\nu$ on $X$,
we write $\mu \leq \nu$ if $\mu(A)\leq \nu(A)$ for any Borel subset
$A\subseteq X$. A finite Borel measure $\pi$ on $X\times X$ is called
a \emph{partial transportation} from $\mu \in \mathcal{P}(X)$ to $\nu
\in \mathcal{P}(X)$ if $(\pr_1)_{\ast}(\pi)\leq \mu$ and
$(\pr_2)_{\ast}(\pi)\leq \nu$. Note that we do not assume $\pi$ to be
a probability measure. For a partial transportation $\pi$ from $\mu$
to $\nu$, we define its \emph{deficiency} $\mushi \pi$ by $\mushi \pi
:= 1-\pi(X\times X)$. Given $\varepsilon >0$, the partial
transportation $\pi$ is called an \emph{$\varepsilon$-transportation}
from $\mu$ to $\nu$ if it is supported in the subset
\begin{align*}
  \{ (x,y)\in X\times X \mid \dist_{X}(x,y)\leq
  \varepsilon\}.
\end{align*}

\begin{dfn}[Transportation distance]\label{2.3d2}
  Let $\lambda\geq 0$. For two probability measures $\mu,\nu \in
  \mathcal{P}(X)$, we define the \emph{transportation distance}
  $\tra_{\lambda}(\mu,\nu)$ between $\mu$ and $\nu$
  as the infimum of $\varepsilon >0$ such
  that there exists an $\varepsilon$-transportation $\pi$ from $\mu$
  to $\nu$ satisfying $\mushi \pi \leq \lambda \varepsilon$.
\end{dfn}

The following theorem is due to V.~Strassen.

\begin{thm}[{\cite[Corollary 1.28]{villani2},
    \cite[Section $3\frac{1}{2}.10$]{gromov}}]\label{2.3t3}
  For any $\lambda>0$, we have
  \[
  \tra_{\lambda} = \di_{\lambda}.
  \]
\end{thm}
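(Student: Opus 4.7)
The plan is to prove the two inequalities $\di_{\lambda}(\mu,\nu) \leq \tra_{\lambda}(\mu,\nu)$ and $\tra_{\lambda}(\mu,\nu) \leq \di_{\lambda}(\mu,\nu)$ separately, the first being elementary bookkeeping and the second being the substantial content (the classical Strassen-type existence statement).

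For the easy direction, I would fix an $\varepsilon$-transportation $\pi$ from $\mu$ to $\nu$ with $\df\pi\leq\lambda\varepsilon$ and show $\di_{\lambda}(\mu,\nu)\leq\varepsilon$. For any Borel $A\subseteq X$, the support condition on $\pi$ forces $\{(x,y)\in\supp\pi : y\in A\}\subseteq C_{\varepsilon}(A)\times A$, hence
\[
\mu(C_{\varepsilon}(A))\geq \pi(C_{\varepsilon}(A)\times X)\geq \pi(X\times A)\geq \nu(A)-\bigl(\nu(X)-(\pr_2)_{*}\pi(X)\bigr)\geq \nu(A)-\df\pi\geq \nu(A)-\lambda\varepsilon,
\]
using $(\pr_2)_{*}\pi\leq\nu$ in the second-to-last step. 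By the remark following Definition \ref{2.3d1} that only one of the two inequalities in (\ref{2.3s1}) needs verification, this gives $\di_{\lambda}\leq\tra_{\lambda}$.

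For the hard direction, I would fix $\varepsilon>\di_{\lambda}(\mu,\nu)$ and construct an $\varepsilon$-transportation $\pi$ with $\df\pi\leq\lambda\varepsilon$. The plan is a two-step discretization-plus-limit argument. First, approximate $\mu$ and $\nu$ weakly by finitely supported measures $\mu_n,\nu_n$ with $\di_{\lambda}(\mu,\mu_n),\di_{\lambda}(\nu,\nu_n)\to 0$, chosen so that $\mu_n(C_{\varepsilon+\delta_n}(A))\geq \nu_n(A)-\lambda(\varepsilon+\delta_n)$ for all Borel $A$ and some $\delta_n\downarrow 0$. Second, on the finite bipartite graph with vertex sets $\supp\mu_n\sqcup\supp\nu_n$ and edges $\{(x,y) : \dist_X(x,y)\leq\varepsilon+\delta_n\}$, augmented by two auxiliary "sink" vertices of capacity $\lambda(\varepsilon+\delta_n)$ to absorb the deficiency, solve a max-flow problem; the Hall/feasibility condition follows directly from the discretized Prohorov inequality, producing a partial transportation $\pi_n$ with the required support and deficiency. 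Tightness of $\{\pi_n\}$ (inherited from tightness of $\mu_n,\nu_n$) yields a weak limit $\pi$; since the support condition and deficiency bound are both closed under weak convergence, $\pi$ is an $\varepsilon$-transportation with $\df\pi\leq\lambda\varepsilon$. Letting $\varepsilon\downarrow\di_{\lambda}(\mu,\nu)$ gives $\tra_{\lambda}\leq\di_{\lambda}$.

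The main obstacle is the hard direction, specifically verifying that the Prohorov inequality for $\mu,\nu$ transfers cleanly to a Hall-type condition on the discretized measures $\mu_n,\nu_n$ in a way that controls both the support radius and the deficiency simultaneously; a naive discretization loses constants in both parameters at once, so the $\delta_n$ regularization must be chosen carefully so that these losses vanish in the limit. Since this is Strassen's theorem in essentially its original form, in practice I would simply invoke the reference \cite[Corollary 1.28]{villani2} rather than reprove it, but the sketch above outlines the self-contained route.
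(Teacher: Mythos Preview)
The paper does not prove this theorem at all: it is stated as a known result attributed to Strassen, with citations to \cite[Corollary 1.28]{villani2} and \cite[Section $3\frac{1}{2}.10$]{gromov}, and no argument is given. Your own concluding remark---that in practice you would simply invoke \cite[Corollary 1.28]{villani2}---is precisely what the paper does.

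Your sketch of the easy direction is correct. Your outline of the hard direction (discretize, apply a Hall/max-flow argument on the finite bipartite graph with sink vertices absorbing the deficiency, pass to a weak limit) is one standard route to Strassen's theorem, though the details you flag as the main obstacle do require care. Since the paper offers no proof to compare against, there is nothing further to contrast; your proposal is consistent with, and more detailed than, the paper's treatment.
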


Let $(X,\dist_X)$ be a complete metric space. We indicate by
$\mathcal{P}^2(X)$ the set of all Borel probability measures $\nu \in
\mathcal{P}(X)$ such that
\[
\int_{X}\dist_X(x,y)^2 d\nu (y)<+\infty
\]
for some $x\in X$.

\begin{dfn}[($L^2$-)Wasserstein distance]\label{2.3d4}
  For two probability measures $\mu,\nu \in \mathcal{P}^2(X)$, we
  define the \emph{$L^2$-Wasserstein distance $\dist_2^W(\mu,\nu)$}
  between $\mu$ and $\nu$ as the infimum of
  \begin{align*}
    \Big(\int_{X\times X}\dist_X(x,y)^2  d\pi
    (x,y)\Big)^{1/2},
  \end{align*}
  where $\pi \in \mathcal{P}^2(X\times X)$ runs over all \emph{couplings}
  of $\mu$ and $\nu$,
  i.e., probability measures $\pi$ with the property that $\pi (A\times
  X)=\mu(A)$ and $\pi (X\times A)=\nu (A)$ for any Borel subset
  $A\subseteq X$. It is known that this infimum is achieved by some
 transport plan, which we call an \emph{optimal transport plan} for $\dist_2^W(\mu,\nu)$. 
\end{dfn}

If the underlying space $X$ is compact, then the topology on
$\mathcal{P}(X)$ induced from the $L^2$-Wasserstein distance function
coincides with that of the weak convergence (see \cite[Theorem 7.12]{villani2}).


\section{Proof of Theorem \ref{Mthm}}\label{prfmain}

       In order to prove Theorem \ref{Mthm} we need to explain some
       useful tools from the theory of optimal transportation. Refer to \cite{villani2,villani}
       for more details.

Let $(X,\dist_X)$ be a metric space. A rectifiable curve
$\gamma:[0,1]\to X$ is called a \emph{geodesic} if its arclength
coincides with the distance $\dist_X(\gamma(0),\gamma(1))$ and it has a
constant speed, i.e., parameterized proportionally to the arclength. We
say that a metric space is a \emph{geodesic space}
if any two points are joined by a geodesic between them. It is known
that $(\mathcal{P}^2(X),\dist_2^W)$ is compact geodesic space as soon as
$X$ is (\cite[Proposition 2.10]{sturm}). 


Let $M$ be a close Riemannian manifold. For two probability measures $\mu_0,\mu_1\in \mathcal{P}^2(M)$ which are
absolutely continuous with respect to $d \vol_M$, there is a unique geodesic $(
\mu_t)_{t\in [0,1]}$ between them with respect to the $L^2$-Wasserstein
distance function $\dist_2^W$ (\cite[Theorem 9]{mcc}).

 For an mm-space $X$ let us denote by $\Gamma$ the set of minimal
 geodesics $\gamma:[0,1]\to X$ endowed with the distance
 \begin{align*}
  \dist_{\Gamma}(\gamma_1,\gamma_2):=\sup_{t\in [0,1]}\dist_X(\gamma_1(t),\gamma_2(t)).
  \end{align*}Define the \emph{evaluation map} $e_t:\Gamma \to X$ for
  $t\in [0,1]$ as $e_t(\gamma):=\gamma(t)$. A probability measure
  $\Pi\in \mathcal{P}(\Gamma)$ is called a \emph{dynamical optimal
  transference plan} if the curve $\mu_t:=(e_t)_{\ast}\Pi$, $t\in [0,1]$, is a
  minimal geodesic in $(\mathcal{P}^2(X),\dist_2^W)$. Then
  $\pi:=(e_0\times e_1)_{\ast}\Pi$ is an optimal coupling of $\mu_0$ and
  $\mu_1$, where $e_0\times e_1:\Gamma\to X\times X$ is the ``endpoints''
  map, i.e., $(e_0\times e_1)(\gamma):=(e_0(\gamma),e_1(\gamma))$.

  \begin{lem}[{\cite[Proposition 2.10]{lott-villani}}]\label{3l1}If
  $(X,\dist_X)$ is locally compact, then any minimal geodesic
   $(\mu_t)_{t\in [0,1]}$ in
  $(\mathcal{P}^2(X),\dist_2^W)$ is associated with a dynamical optimal transference
  plan $\Pi$, i.e., $\mu_t=(e_t)_{\ast}\Pi$.
   \end{lem}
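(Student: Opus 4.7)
The plan is to realize $\Pi$ as a weak limit of path-space measures obtained by dyadic refinement of the Wasserstein geodesic $(\mu_t)$. The key ingredients are the Gluing Lemma, a measurable selection of minimal geodesics between given endpoints (available because $X$ is locally compact and geodesic), and an Arzel\`a-Ascoli style compactness on the space of $W$-Lipschitz paths, where $W:=\dist_2^W(\mu_0,\mu_1)$.

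First, for each $n\geq 0$ and each $k=0,1,\ldots,2^n-1$, take an optimal coupling $\pi^{(n)}_k\in \mathcal{P}(X\times X)$ of $\mu_{k/2^n}$ and $\mu_{(k+1)/2^n}$, and apply the Gluing Lemma iteratively to obtain a measure $\widetilde\Pi_n$ on $X^{2^n+1}$ whose projection onto consecutive factors is $\pi^{(n)}_k$. Because $X$ is locally compact and complete geodesic, the set of minimal geodesics between any two prescribed points is non-empty and closed in $\Gamma$, and the Kuratowski-Ryll-Nardzewski selection theorem yields a Borel map $s:X\times X\to \Gamma$ choosing one such geodesic. Concatenating the curves $s(x_k,x_{k+1})$ on the subintervals $[k/2^n,(k+1)/2^n]$ defines a Borel map $X^{2^n+1}\to C([0,1],X)$ into the space of piecewise-geodesic curves; let $\Pi_n$ be the push-forward of $\widetilde\Pi_n$ under this map.

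By construction $(e_{k/2^n})_{\ast}\Pi_n=\mu_{k/2^n}$, and the minimality of $(\mu_t)$ in $(\mathcal{P}^2(X),\dist_2^W)$ forces $\dist_2^W(\mu_{k/2^n},\mu_{(k+1)/2^n})=2^{-n}W$, so every curve in $\supp\Pi_n$ is $W$-Lipschitz on $[0,1]$. Combined with tightness of $\mu_0$ this gives tightness of the family $\{\Pi_n\}$ via Arzel\`a-Ascoli, and a subsequence converges weakly to some $\Pi\in\mathcal{P}(C([0,1],X))$. Continuity of each $e_t$ yields $(e_t)_{\ast}\Pi=\mu_t$ at every dyadic $t$, hence by density and continuity at every $t\in[0,1]$. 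To see that $\Pi$ concentrates on minimal geodesics, observe that the marginal computation gives $\int \dist_X(\gamma(0),\gamma(1))^2\,d\Pi=W^2$, while lower semicontinuity of the length $L$ on $C([0,1],X)$ together with the Portmanteau theorem yields
\begin{align*}
\int L(\gamma)^2\,d\Pi \;\leq\; \liminf_{n\to\infty}\int L(\gamma)^2\,d\Pi_n \;=\; W^2 \;=\; \int \dist_X(\gamma(0),\gamma(1))^2\,d\Pi;
\end{align*}
combined with the pointwise bound $L(\gamma)\geq \dist_X(\gamma(0),\gamma(1))$ this forces equality $\Pi$-almost surely, and together with the uniform $W$-Lipschitz parametrization this means $\Pi$-a.e.\ curve is a minimal geodesic.

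The hard part is the tightness step under the mere hypothesis of local (rather than global) compactness: one must combine tightness of the single marginal $\mu_0$ with the uniform $W$-Lipschitz bound on curves in $\supp\Pi_n$ to localize each $\Pi_n$ on a common $\sigma$-compact tube before invoking Arzel\`a-Ascoli. The identification of the limit as a measure on genuine minimal geodesics, rather than merely on Lipschitz paths, is the second delicate point and requires the two-sided length/endpoint comparison displayed above.
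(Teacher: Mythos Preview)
The paper does not supply a proof of this lemma; it is quoted verbatim from Lott--Villani (where, in fact, $X$ is assumed compact rather than merely locally compact). So there is nothing to compare your argument against in the present paper. That said, your overall architecture --- dyadic gluing of optimal couplings, measurable selection of connecting geodesics, tightness, and identification of the weak limit --- is the standard one.

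There is, however, a real gap in your tightness step. The assertion that ``every curve in $\supp\Pi_n$ is $W$-Lipschitz on $[0,1]$'' does \emph{not} follow from $\dist_2^W(\mu_{k/2^n},\mu_{(k+1)/2^n})=2^{-n}W$. That identity only controls the \emph{second moment} of the segment lengths: one gets $\int L(\gamma)^2\,d\Pi_n\leq W^2$ (and in fact equality, as your own displayed computation shows), but individual curves can have arbitrarily large Lipschitz constant. So the Arzel\`a--Ascoli argument, as you have written it, does not apply: there is no uniform modulus of continuity on the full support of $\Pi_n$.

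The repair is routine once the issue is recognized. From $\int L(\gamma)^2\,d\Pi_n=W^2$ and Chebyshev, the set $\{L(\gamma)>R\}$ has $\Pi_n$-measure at most $W^2/R^2$, uniformly in $n$. Since a complete, locally compact length space is proper (metric Hopf--Rinow), the set of geodesics $\gamma$ with $\gamma(0)$ in a fixed compact $K\subseteq X$ and $L(\gamma)\leq R$ is relatively compact in $\Gamma$. Combining this with tightness of $\mu_0$ gives tightness of $\{\Pi_n\}$. With this correction in place, the rest of your argument (marginal identification at dyadic times and the length/endpoint squeeze to conclude that $\Pi$ is supported on minimal geodesics) goes through.
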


 Let $\mu$ and $\nu$ be two probability measures on a set $X$. We define
 the \emph{relative entropy} $\ent_{\mu}(\nu)$ of $\nu$ with respect to
 $\mu$ as follows. If $\nu$ is absolutely continuous with respect to $\mu$,
 writing $d\nu=\rho d\mu$, then
 \begin{align*}
  \ent_{\mu} (\nu):=\int_M \rho \log \rho   d \mu,
  \end{align*}otherwise $\ent_{\mu}(\nu):=\infty$.

  \begin{dfn}[{Curvature-dimension condition, \cite{lott-villani}, \cite{sturm3,sturm}}]\label{3d1}Let $K$ be a real number. We say that
   an mm-space satisfies the \emph{curvature-dimension condition} $CD(K,\infty)$
   if for any $\nu_0,\nu_1\in \mathcal{P}^2(X)$ there exists a minimal
   geodesic $( \nu_t )_{t\in [0,1]}$ in $(\mathcal{P}^2(X),\dist_2^W)$
   from $\nu_0$ to $\nu_1$ such that
   \begin{align*}
    \ent_{\mu_X}(\nu_t)\leq (1-t)\ent_{\mu_X}(\nu_0)+t
   \ent_{\mu_X}(\nu_1)-\frac{K}{2}(1-t)t \dist_2^W(\nu_0,\nu_1)^2
    \end{align*}for any $t\in [0,1]$.
   \end{dfn}

In the above definition, assume that both $\nu_0$ and $\nu_1$ are
absolutely continuous with respect to $\mu_X$. Then Jensen's inequality applied to the convex function $r\mapsto r\log r$ gives
\begin{align}\label{3s1}
     & \log \mu_X(\supp \nu_t)\\ \geq \ & -(1-t) \int_{M} \rho_0 \log
 \rho_0 d\mu_X -t\int_M  \rho_1 \log \rho_1 d\mu_X + \frac{Kt(1-t)}{2}
 \dist_2^W(\mu_0,\mu_1)^2, \notag
\end{align}where $\rho_0$ and $\rho_1$ are densities of $\nu_0$
 and $\nu_1$ with respect to $\mu_X$ respectively.
 In particular, for two Borel subsets $A,B\subseteq X$ with
 $\mu_X(A),\mu_X(B)>0$, we have
 \begin{align}\label{3s1cont}
  &\log \mu_X (\supp \nu_t)\\ \geq \ &(1-t)\log \mu_X(A)+t\log
  \mu_X(B)+\frac{Kt(1-t)}{2}\dist_2^W\Big(\frac{\mu_X|_A}{\mu_X(A)},
  \frac{\mu_X|_B}{\mu_X(B)} \Big)^2 \tag*{}
  \end{align}(\cite{sturm},\cite{ohta}).

 \begin{thm}[{\cite{cms1,cms2}, \cite{vrs}, \cite{sturm2}}]\label{3t2}For a complete weighted Riemannian manifold $(M,\mu)$, we
  have $\ric_{\mu}\geq K$ for some $K\in \mathbb{R}$ if and only if
  $(M,\mu)$ satisfies $CD(K,\infty)$.
  \end{thm}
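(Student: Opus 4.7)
The plan is to prove both implications via optimal transport on $M$, tying the global displacement convexity condition to the pointwise Bakry-\'Emery Ricci tensor through a Jacobi field computation along the transport geodesics. For the forward direction $\ric_\mu \geq K \Rightarrow \cd(K,\infty)$, standard approximation reduces matters to $\nu_0,\nu_1 \in \pac(M)$. McCann's Riemannian version of Brenier's theorem then yields a $d^2/2$-concave potential $\phi$ such that $T_t(x) := \exp_x(-t\nabla \phi(x))$ satisfies $(T_t)_{\ast}\nu_0 = \nu_t$ and $(\nu_t)_{t\in[0,1]}$ is the unique Wasserstein geodesic from $\nu_0$ to $\nu_1$. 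Writing $d\nu_t = \rho_t\,d\mu$ and letting $J_t(x)$ denote the Jacobian of $T_t$ with respect to $d\vol_M$, the change-of-variables formula gives $\rho_0(x) = \rho_t(T_t(x))\,\widetilde J_t(x)$ with the weighted Jacobian $\widetilde J_t(x) := e^{\psi(x) - \psi(T_t(x))} J_t(x)$, whence
\begin{align*}
\ent_\mu(\nu_t) \ = \ \int_M \log \rho_0 \, d\nu_0 \ - \ \int_M \log \widetilde J_t(x) \, d\nu_0(x).
\end{align*}
It therefore suffices to prove that $t \mapsto -\log \widetilde J_t(x)$ is $K\lvert \dot\gamma_x\rvert^2$-convex for $\nu_0$-a.e.\ $x$, where $\gamma_x(t) := T_t(x)$.

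This reduction turns the theorem into a classical Riemannian calculation along $\gamma_x$. A matrix Jacobi field representation of $J_t$ yields the Riccati-type inequality
\begin{align*}
\frac{d^2}{dt^2}\log J_t \ + \ \frac{1}{n}\Big(\frac{d}{dt}\log J_t\Big)^2 \ \leq \ -\ric_M(\dot\gamma_x, \dot\gamma_x),
\end{align*}
while $\frac{d^2}{dt^2}(\psi \circ \gamma_x) = \hess\psi(\dot\gamma_x, \dot\gamma_x)$ since $\gamma_x$ is a Riemannian geodesic. Dropping the nonnegative quadratic term and combining the two gives
\begin{align*}
\frac{d^2}{dt^2}\bigl(-\log \widetilde J_t(x)\bigr) \ \geq \ \ric_\mu(\dot\gamma_x, \dot\gamma_x) \ \geq \ K\lvert \dot\gamma_x \rvert^2,
\end{align*}
and integrating twice in $t$ and averaging against $\nu_0$ produces the $\cd(K,\infty)$ inequality of Definition \ref{3d1}, using that $\int \lvert \dot\gamma_x \rvert^2 \, d\nu_0(x) = \dist_2^W(\nu_0,\nu_1)^2$ by optimality of the dynamical plan $\Pi$ supplied by Lemma \ref{3l1}.

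For the converse $\cd(K,\infty)\Rightarrow \ric_\mu\geq K$, I would localize at an arbitrary $x_0 \in M$ and $v \in T_{x_0}M$ and feed carefully chosen perturbations into the assumed entropy convexity. Concretely, take $\nu_0^\e, \nu_1^\e$ to be normalized smooth densities supported in balls of radius $\asymp \e$ around $\exp_{x_0}(-\e v)$ and $\exp_{x_0}(\e v)$ respectively, so that $\dist_2^W(\nu_0^\e, \nu_1^\e)^2 = 4\e^2 \lvert v \rvert^2 + o(\e^2)$. Expanding both sides of the $\cd(K,\infty)$ inequality at the midpoint through the same Jacobian formula as above, now read as an identity rather than an inequality, and matching terms of order $\e^2$ gives $\ric_\mu(v,v) \geq K\lvert v \rvert^2$, which, since $v$ is arbitrary, yields the claim. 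The hardest step is precisely this converse: one must engineer the perturbations so that both the Wasserstein-distance asymptotics and the Jacobian expansion have error $o(\e^2)$ uniformly across the supports, which in practice requires a delicate normal-coordinate expansion near $x_0$. In the forward direction the remaining technical point is the cut locus of $T_t$, where smoothness fails; this is dispatched by McCann's theorem, which guarantees that $T_t$ is differentiable $\nu_0$-almost everywhere.
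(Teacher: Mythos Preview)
The paper does not prove this theorem at all: it is quoted as a known result with citations to \cite{cms1,cms2}, \cite{vrs}, and \cite{sturm2}, and is used as a black box in the subsequent argument. Your sketch is precisely the standard argument from those references---McCann's transport map plus the Jacobian/Riccati computation for the forward direction, and localization with small perturbations for the converse---and is correct in outline, so there is nothing to compare against in the paper itself.
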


Theorem \ref{Mthm} follows from the following key theorem together with
Theorem \ref{2.2t4} and Proposition \ref{2.2t5}.

\begin{thm}\label{3t2}
 Let $(M,\mu)$ be a closed weighted Riemannian manifold of nonnegative
 Bakry-\'Emery Ricci curvature. If $(M,\mu)$
 satisfies
 \begin{align}\label{3ss1}
  \sep((M,\mu);\underbrace{\kappa,\kappa, \cdots,\kappa}_{k+1\text{ times}})\leq
  \frac{1}{D} \log \frac{1}{\kappa^2}
  \end{align}for any $\kappa>0$, then we have
 \begin{align}\label{conc1}
  \sep((M,\mu);\underbrace{\kappa,\kappa, \cdots,\kappa}_{k\text{ times}})\leq
  \frac{c}{D} \log \frac{1}{\kappa^2}
  \end{align}for any $\kappa>0$ and for some universal numeric constant
 $c>0$.
 \end{thm}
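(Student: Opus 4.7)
The plan is to argue by contradiction, using the Brunn--Minkowski-type inequality \eqref{3s1cont} from $CD(0,\infty)$ (available because nonnegative Bakry--\'Emery Ricci curvature is equivalent to $CD(0,\infty)$) to manufacture a well-separated $(k+1)$st set from any $k$-tuple witnessing failure of the conclusion.

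Suppose the conclusion fails with a universal constant $c$ to be chosen large later: for some $\kappa > 0$ there exist Borel subsets $A_0, \dots, A_{k-1}$ with $\mu(A_i) \geq \kappa$ and $\dist_M(A_i, A_j) > R := (c/D)\log(1/\kappa^2)$ for all $i \neq j$. After relabelling, the pair $A_0, A_1$ realises the minimum pairwise distance $r := \dist_M(A_0, A_1) \geq R$. The goal is to construct a Borel set $B$ with $\mu(B) \geq \kappa^{\alpha}$ and $\dist_M(B, A_i) \geq R/\beta$ for all $i$, where $\alpha, \beta$ are absolute constants. Then the $(k+1)$-tuple $(A_0, \dots, A_{k-1}, B)$ realises separation at least $R/\beta$ at volume level $\kappa^\alpha$, and the hypothesis applied at parameter $\kappa^\alpha$ forces
\[
\frac{R}{\beta} \;\leq\; \frac{1}{D}\log \frac{1}{\kappa^{2\alpha}} \;=\; \frac{\alpha}{D}\log\frac{1}{\kappa^2},
\]
contradicting the choice of $R$ as soon as $c > \alpha\beta$.

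To produce $B$, I invoke Lemma~\ref{3l1} to obtain a dynamical optimal transference plan $\Pi$ between $\mu_{A_0} := \mu|_{A_0}/\mu(A_0)$ and $\mu_{A_1} := \mu|_{A_1}/\mu(A_1)$. The midpoint pushforward $\nu_{1/2} := (e_{1/2})_{\ast}\Pi$ has support of mass at least $\sqrt{\mu(A_0)\mu(A_1)} \geq \kappa$ by \eqref{3s1cont} with $K = 0$ and $t = 1/2$. For $z = e_{1/2}(\gamma)$ with $x := e_0(\gamma) \in A_0$, $y := e_1(\gamma) \in A_1$, and length $L(\gamma) := \dist_M(x, y)$, the triangle inequality yields $\dist_M(z, A_p) \geq R - L(\gamma)/2$ for $p \geq 2$ and $\dist_M(z, A_0), \dist_M(z, A_1) \geq r - L(\gamma)/2 \geq R - L(\gamma)/2$. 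Hence, letting $B$ be the support of the $e_{1/2}$-pushforward of $\Pi$ restricted to $\{L(\gamma) \leq R\}$, every point of $B$ sits at distance at least $R/2$ from every $A_i$.

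The principal obstacle is to bound $\mu(B)$ from below by $\kappa^{\alpha}$ for an absolute $\alpha$, i.e.\ to show that the ``long geodesic'' part $\{L(\gamma) > R\}$ does not carry almost all of the mass of $\nu_{1/2}$. The plan is to combine (i) a Markov-type inequality $\Pi(\{L > R\}) \leq \dist_2^W(\mu_{A_0}, \mu_{A_1})^2/R^2$ from the $L^2$-optimality of $\Pi$, with (ii) an effective upper bound on $\dist_2^W(\mu_{A_0}, \mu_{A_1})$ extracted from the hypothesis: intuitively, if a positive fraction of $\mu_{A_0}$-mass sat at distance $\gg R$ from $A_1$, then that remote subset of $A_0$, together with $A_1, \dots, A_{k-1}$ and the midpoint-set of its near part, would itself form a $(k+1)$-tuple violating the hypothesis. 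Executing this bookkeeping, using Lemma~\ref{2.2l2} to control how much $\mu$-mass can sit outside the $R$-neighbourhoods of $A_0, \dots, A_{k-1}$, should yield $\mu(B) \geq \kappa^{\alpha}$ with $\alpha$ absolute (no worse than $2$); the logarithmic shape of the hypothesis then absorbs $\alpha$ into the final constant $c$.
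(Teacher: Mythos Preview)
Your midpoint construction has a fatal arithmetic error: every geodesic in $\supp\Pi$ runs from $A_0$ to $A_1$, so $L(\gamma)=\dist_M(e_0(\gamma),e_1(\gamma))\geq \dist_M(A_0,A_1)=r>R$; thus $\{L(\gamma)\leq R\}=\emptyset$ and $B$ is empty. The natural repair is to use a cutoff like $\{L\leq \tfrac{3}{2}r\}$, which still gives $\dist_M(z,A_p)\geq r-L/2\geq r/4>R/4$ for every $p$. But then the genuine obstacle appears: to retain a definite $\Pi$-mass on $\{L\leq \tfrac{3}{2}r\}$ via Markov you need $\dist_2^W(\mu_{A_0},\mu_{A_1})\leq Cr$ for an absolute $C$, and nothing in the hypotheses prevents $\mu_{A_0}$ from having substantial mass at distance $\gg r$ from $A_1$. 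Your bootstrap proposal for bounding $W_2$ is circular: it requires a midpoint set of controlled mass as one member of the $(k+1)$-tuple, which is precisely what is at stake. Lemma~\ref{2.2l2} tells you that $\bigcup_i C_{R/c}(A_i)$ has mass $\geq 1-\kappa^{\alpha}$, but this says nothing about the pairwise distances between typical points of $A_0$ and $A_1$; mass of $A_0$ may sit across several of the neighbourhoods $C_{R/c}(A_i)$ in a way that makes $W_2$ large.

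The paper's proof proceeds quite differently and avoids this issue entirely. Rather than transporting between two small sets $A_0,A_1$, it first uses E.~Milman's theorem (Proposition~\ref{2.2t5}, equivalently a lower bound on $\lambda_1$ from concentration) together with Lemma~\ref{2.2l2} to build a pair $B_0\subset B_1$ with $\kappa/4\leq\mu(B_0)\leq 1/2$, $\mu(B_1)\geq 1-\kappa^6$, and $\dist_M(B_0,B_1\setminus B_0)\gtrsim \max\{\alpha,\ \kappa/\sqrt{\lambda_1}\}$. It then couples $\mu_{B_0}$ with the nearly-full measure $\mu_{B_1}$; the $2$-separation bound (\ref{2.2sss1}) and Strassen's theorem (Claim~\ref{3cl2}) give a coupling with most mass at distance $\leq c_4\sqrt{\lambda_1}^{-1}\log(1/\kappa^2)$, so $W_2$ is under control \emph{because one marginal is essentially $\mu$ itself}. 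Finally the entropy inequality is applied at a very small time $t=\kappa^3$, not at $t=1/2$: the support of $\nu_t$ then lies in $B_0$ up to mass $\kappa^6$, and comparing $\log\mu(B_0)\leq\log(1/2)$ with the right-hand side of (\ref{3s1}) produces the contradiction. The use of $\lambda_1$ (via E.~Milman) to control transport distances is the idea your sketch is missing.
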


 The idea of the proof of Theorem \ref{3t2} is the following. It turns out that it is enough to prove (\ref{conc1}) for sufficiently
 small $\kappa>0$ and sufficiently large $c>0$. We suppose the converse
 of this, i.e.,
 \begin{align*}
  \sep ((M,\mu);\underbrace{\kappa,\kappa, \cdots,\kappa}_{k\text{ times}})> \frac{c}{D}\log \frac{1}{\kappa^2}
  \end{align*}for sufficiently small $\kappa>0$ and sufficiently large
  $c>0$. Put $\alpha:= (c/D)\log (1/\kappa)$. By the definition of the
  separation distance there exists $k$
  Borel subsets $A_0,A_1.\cdots, A_{k-1}\subseteq M$ such that
  $\min_{i\neq j}\dist(A_i,A_j)>\alpha$ and $\mu(A_i)\geq \kappa$ for
  any $i$. If we choose the constant $c$ large enough so that
  \begin{align*}
   \sep ((M,\mu);\underbrace{\kappa,\kappa, \cdots,\kappa}_{k\text{
   times}}, \kappa^{100})\leq \sep
   ((M,\mu);\underbrace{\kappa^{100},\kappa^{100},
   \cdots,\kappa^{100}}_{k+1\text{ times}})\leq \alpha /100,
   \end{align*}then by Lemma \ref{2.2l2} we have
   \begin{align*}
    \mu \Big(\bigcup_{i=0}^{k-1} C_{\alpha/100}(A_i) \Big)\geq 1-\kappa^{100}.
    \end{align*}It means that if $\kappa>0$ is sufficiently small, the measure
    of the set
    $\bigcup_{i=0}^{k-1} C_{\alpha/100}(A_i)$ is nearly $1$. Although it is not true, we assume that
    \begin{align}\label{uso}
     \mu \Big(\bigcup_{i=0}^{k-1} C_{\alpha/100}(A_i) \Big)=1
     \end{align}in order to tell the idea of the proof. Putting $A:=C_{\alpha/100}(A_0)$ and
     $B:=\bigcup_{i=1}^{k-1} C_{\alpha/100}(A_i)$, we have $M=A\cup B$,
     $A\cap B=\emptyset$, $\mu(A)\geq \kappa,\mu(B)\geq \kappa$, and $\dist(A,B)\geq \alpha/2$. 

     Let $(\mu_t)_{t\in[0,1]}$ be a geodesic from
   $\mu_A:=(1/\mu(A))\mu|_A$ to $\mu$ with respect to $\dist_2^W$. For
   sufficiently small $t>0$ we have $\dist(x,A)<\alpha/2 \leq
   \dist(A,B)$ for any $x\in \supp \mu_t$, which gives $\supp
   \mu_t\subseteq A$. This leads a contradiction since by (\ref{3s1cont}) we have
   \begin{align*}
    \log \mu(A)\geq \log \mu(\supp \mu_t)\geq (1-t)\log \mu(A)+\log \mu(M),
    \end{align*}which implies $\log \mu(A)\geq 0$. Although (\ref{uso})
    is always not true, we show below that the above idea can be
    accomplished by controlling separated subsets and estimating
    average distances between them.

 \begin{proof}[Proof of Theorem \ref{3t2}]It suffices to prove that there exist two universal
  numeric constants $c_0,\kappa_0>0$ such that
  \begin{align}\label{3s2}
   \sep ((M,\mu);\underbrace{\kappa,\kappa, \cdots,\kappa}_{k\text{ times}})\leq \frac{c_0}{D}\log \frac{1}{\kappa^2}
   \end{align}for any $\kappa\leq \kappa_0$. In fact, if $\kappa\geq
  1/2$, then the left-hand side of the above inequality is zero and there is nothing to prove. In the
  case where $\kappa_0< \kappa \leq 1/2$, by (\ref{3s2}) we have 
  \begin{align*}
   \sep((M, \mu);\underbrace{\kappa,\kappa, \cdots,\kappa}_{k\text{
   times}})\leq \ &
   \sep((M,\mu);\underbrace{\kappa_0,\kappa_0,\cdots,\kappa_0}_{k\text{
   times}})\\
   \leq \ &\frac{c_0\log
   \frac{1}{\kappa_0^2}}{D\log \frac{1}{\kappa^2}} \log
   \frac{1}{\kappa^2}\\ \leq \ & \frac{c_0\log
   \frac{1}{\kappa_0^2}}{D\log 4} \log \frac{1}{\kappa^2},
   \end{align*}which implies the conclusion of the theorem.

  Suppose the contrary to (\ref{3s2}), i.e.,
  \begin{align}\label{3s3}
   \sep((M,\mu);\underbrace{\kappa,\kappa, \cdots,\kappa}_{k\text{ times}}) > \frac{c_1}{D} \log \frac{1}{\kappa^2},
   \end{align}where $c_1>0$ is a sufficiently large universal numeric
  constant and $\kappa>0$ is a sufficiently small number. Both the largeness of $c_1$ and the
  smallness of $\kappa$ will be specified later. Note that the assumption (\ref{3s3}) immediately gives $k
  \kappa<1$ (otherwise, the left-hand side of (\ref{3s3}) is zero). We denote
  the right-hand side of (\ref{3s3}) by $\alpha$, i.e.,
  \begin{align*}
   \alpha:=\frac{c_1}{D}\log \frac{1}{\kappa^2}.
   \end{align*}
  \begin{claim}\label{3cl1}If $c_1>0$ (resp., $\kappa>0$) in (\ref{3s3}) is large
   enough (resp., small enough), then there exist two closed subsets $B_0, B_1 \subseteq M$ such
   that $B_0\subseteq B_1$, $\kappa/4 \leq \mu(B_0)\leq 1/2$, $\mu(B_1)\geq
   1-\kappa^6$, and 
   \begin{align*}
    \dist_M(B_0,B_1\setminus B_0)\geq c_2 \max \Big\{
    \alpha, \frac{\kappa}{\sqrt{\lambda_1(M,\mu)}} \Big\}
    \end{align*}for some universal numeric constant $c_2>0$.
   \begin{proof}The assumption (\ref{3s3}) implies the existence of $k$ Borel
  subsets $A_{0}, A_{1}, \cdots , A_{k-1}\subseteq M$
  such that $\mu(A_{i})\geq \kappa$ for any $i$ and
\begin{align*}
    \dist_{M}(A_{i},A_{j})\geq \alpha \text{ for any }i\neq j.
 \end{align*}If $\kappa<1/8$ and $c_1\geq 8$, then by (\ref{3ss1}) we have 
\begin{align*}
 \sep ((M,\mu);\underbrace{\kappa,\kappa,
 \cdots,\kappa}_{k\text{ times}},1/8) \leq \sep
 ((M,\mu);\underbrace{\kappa,\kappa,  \cdots,\kappa}_{k+1\text{ times}}
 ) \leq \frac{1}{D}\log \frac{1}{\kappa^2}\leq \frac{\alpha}{8}.
 \end{align*}Hence Lemma \ref{2.2l2} yields
  \begin{align}\label{3s4}
   \mu\Big(\bigcup_{i=0}^{k-1} C_{\alpha/8 }(A_i)\Big)\geq \frac{7}{8}.
   \end{align}Note that
    \begin{align}\label{3s5}\dist_M(C_{\alpha/8}(A_i),
    C_{\alpha/8}(A_j))\geq \alpha/4
     \end{align}for any $i\neq j$. According to Proposition \ref{2.2t5} we take $X_0,X_1\subseteq M$ such that
  \begin{align}\label{3s6}
  \mu(X_i)\geq \frac{1}{2}-\frac{\kappa}{4} \ \  (i=0,1)
  \end{align}and
  \begin{align}\label{3s7}
   \dist_M(X_0,X_1)\geq \frac{\kappa}{8\sqrt{\lambda_1(M,\mu)}}.
   \end{align}Set $Y:=X_0\cup X_1$. By (\ref{3s6}) we have $\mu (Y)\geq
    1-\frac{\kappa}{2}$ and thus
  \begin{align*}
   \mu(Y\cap C_{\alpha/8}(A_i))\geq \mu(Y\cap A_i)\geq \Big(1-\frac{\kappa}{2}\Big)+\kappa -1 \geq
   \frac{\kappa}{2} 
   \end{align*}for each $i=0,1,\cdots, k-1$. Suppose that $\mu(X_i\cap
    C_{\alpha/8}(A_l))< \kappa/4$ for some $i\in \{0,1\}$ and for any
    $l=0,1,\cdots, k-1$. Then we have
  \begin{align*}
   \mu\Big(X_i \cap \bigcup_{l=0}^{k-1}C_{\alpha/8}(A_l)\Big)\leq \frac{k\kappa}{4}<\frac{1}{4}.
   \end{align*}Combining (\ref{3s4}) with (\ref{3s6}) we also get
  \begin{align*}
     \mu\Big(X_i \cap \bigcup_{l=0}^{k-1} C_{\alpha/8}(A_l)\Big)\geq
   \Big(\frac{1}{2}-\frac{\kappa}{4}\Big) +\frac{7}{8}- 1\geq \frac{1}{4},
   \end{align*}which is a contradiction (we have used $\kappa<1/8$). Therefore for each
    $l=0,1,\cdots, k-1$ we may choose $n_{l}\in \{0,1\}$ so that
    \begin{align}\label{3ss2}
     \mu(X_{n_l}\cap C_{\alpha/8}(A_l))\geq \kappa/4
     \end{align}and
    \begin{align*}
     I_i:=\{ l\in \{0,1,\cdots,k-1\} \mid n_l=i \}\neq \emptyset \ \ \ (i=0,1).
     \end{align*}For each $i=0,1$, we set
  \begin{align*}
   A_{i}':=X_i \cap \bigcup_{l\in I_{i}} C_{\alpha/8}(A_l). 
   \end{align*}Combining (\ref{3s5}) with (\ref{3s7}) yields
  \begin{align}\label{3s9}
   \dist_M(A_0',A_1')\geq \max \Big\{
   \frac{\kappa}{8\sqrt{\lambda_1(M,\mu)}},  \frac{\alpha}{4} \Big\}=:\beta.
   \end{align}Since
  \begin{align*}
   \sep\Big((M,\mu); \underbrace{\frac{\kappa}{2},\frac{\kappa}{2},
   \cdots,\frac{\kappa}{2}}_{k\text{ times}},
   \kappa^6\Big) \leq \sep ((M,\mu); \underbrace{\kappa^6,\kappa^6,
   \cdots,\kappa^6}_{k+1\text{ times}}) \leq
   \frac{c_3}{D}\log \frac{1}{\kappa^2}
   \end{align*}for some universal numeric constant $c_3>0$, we get
    \begin{align*}
      \sep\Big((M,\mu); \underbrace{\frac{\kappa}{2},\frac{\kappa}{2},
   \cdots,\frac{\kappa}{2}}_{k\text{ times}},
   \kappa^6\Big) \leq \frac{\alpha}{16}\Big(\leq \frac{\beta}{4}\Big)
     \end{align*}provided that $c_1$ in (\ref{3s3}) is large
    enough. Put $B_0:=C_{\alpha/16}(A_0')$ and
    $B_{1}:=C_{\alpha/16}(A_0')\cup C_{\alpha/16}(A_1')$. We may assume
    that $\mu(C_{\alpha/16}(A_0'))\leq 1/2$. Thanks to Lemma
    \ref{2.2l2} it is easy to check that $\mu(B_1)\geq 1-\kappa^6$. By
    (\ref{3ss2}) and (\ref{3s9}), we see that $B_0$ and $B_1$ possess
   the other desired properties.
    \end{proof}
   \end{claim}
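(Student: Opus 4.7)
The plan is to start from the hypothesis (\ref{3s3}) and refine the $k$-tuple of widely-separated sets it produces into a bipartition $(B_0, B_1\setminus B_0)$ of an almost-full set. Two different separation bounds will be combined: the $(k{+}1)$-set separation coming from the standing assumption (\ref{3ss1}) (used to show neighborhoods of the $A_l$ cover most of $M$), and the $2$-set separation from Proposition \ref{2.2t5} (used to inject a lower bound involving $\sqrt{\lambda_1(M,\mu)}$).

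First I would apply (\ref{3s3}) to extract $A_0,\ldots,A_{k-1}\subseteq M$ with $\mu(A_l)\geq \kappa$ and pairwise distance $>\alpha$. Using (\ref{3ss1}) with one extra slot of measure $1/8$ and invoking monotonicity of $\sep$, the bound $\sep((M,\mu);\kappa,\ldots,\kappa,1/8)\leq \alpha/8$ holds as soon as $c_1$ is large enough; Lemma \ref{2.2l2} then gives $\mu\bigl(\bigcup_l C_{\alpha/8}(A_l)\bigr)\geq 7/8$, and the neighborhoods stay $\alpha/4$-separated. Independently, Proposition \ref{2.2t5} produces $X_0,X_1\subseteq M$ with $\mu(X_i)\geq 1/2-\kappa/4$ and $\dist_M(X_0,X_1)\geq \kappa/(8\sqrt{\lambda_1(M,\mu)})$, and $Y:=X_0\cup X_1$ has $\mu(Y)\geq 1-\kappa/2$.

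The heart of the argument is a pigeonhole step. From $\mu(Y)\geq 1-\kappa/2$ and $\mu(A_l)\geq \kappa$ one gets $\mu(Y\cap C_{\alpha/8}(A_l))\geq \kappa/2$, so for each $l$ at least one $i\in\{0,1\}$ satisfies $\mu(X_i\cap C_{\alpha/8}(A_l))\geq \kappa/4$; call it $n_l$. To guarantee that both $I_i:=\{l:n_l=i\}$ are non-empty, I would argue by contradiction: if for some fixed $i$ one has $\mu(X_i\cap C_{\alpha/8}(A_l))<\kappa/4$ for every $l$, then $\mu(X_i\cap \bigcup_l C_{\alpha/8}(A_l))<k\kappa/4<1/4$ (using $k\kappa<1$ which is forced by (\ref{3s3})), whereas inclusion-exclusion with $\mu(X_i)\geq 1/2-\kappa/4$ and the covering estimate $\mu(\bigcup_l C_{\alpha/8}(A_l))\geq 7/8$ forces this intersection to be $\geq 1/4$. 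So the assignment can be made with both $I_0,I_1\neq\emptyset$, and I can define
\[
A_i':=X_i\cap \bigcup_{l\in I_i}C_{\alpha/8}(A_l),\qquad i=0,1.
\]
Because $A_0'\subseteq X_0$ and $A_1'\subseteq X_1$, and because any points drawn from different $C_{\alpha/8}(A_l)$'s are $\geq \alpha/4$ apart, we obtain $\dist_M(A_0',A_1')\geq \beta:=\max\{\kappa/(8\sqrt{\lambda_1(M,\mu)}),\alpha/4\}$.

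Finally I would enlarge and take measure control. Set $B_0:=C_{\alpha/16}(A_0')$ and $B_1:=C_{\alpha/16}(A_0')\cup C_{\alpha/16}(A_1')$, swapping labels if needed so $\mu(B_0)\leq 1/2$; since $\mu(A_0')\geq \kappa/4$ we also get $\mu(B_0)\geq \kappa/4$, and $\dist_M(B_0,B_1\setminus B_0)\geq \beta-\alpha/8\gtrsim \beta$, giving the required $c_2$. For the bound $\mu(B_1)\geq 1-\kappa^6$, I would use (\ref{3ss1}) once more with one slot of measure $\kappa^6$: monotonicity of $\sep$ and the hypothesis give $\sep((M,\mu);\kappa/2,\ldots,\kappa/2,\kappa^6)\leq (c_3/D)\log(1/\kappa^2)\leq \alpha/16$ provided $c_1$ is chosen large enough. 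Applying Lemma \ref{2.2l2} to the $k$ pairwise $(\alpha/4)$-separated pieces comprising $A_0'\cup A_1'$ (grouped back into the individual $X_{n_l}\cap C_{\alpha/8}(A_l)$'s, whose measures are each at least $\kappa/4\geq \kappa/2$ up to a harmless re-parameterization by slightly shrinking $\kappa$) shows that their $(\alpha/16)$-neighborhoods cover mass $\geq 1-\kappa^6$, i.e.\ $\mu(B_1)\geq 1-\kappa^6$.

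The main obstacle is the bookkeeping in the pigeonhole step: one must simultaneously (a) assign each index $l$ to one of the two "sides" $X_0,X_1$ with quantitative mass $\kappa/4$, and (b) guarantee that the assignment is non-degenerate so both $A_0'$ and $A_1'$ appear and inherit the curvature-based separation $\kappa/(8\sqrt{\lambda_1(M,\mu)})$. Both facts are forced by the interplay between $\mu(Y)\geq 1-\kappa/2$, $\mu(\bigcup C_{\alpha/8}(A_l))\geq 7/8$, and the hypothesis $k\kappa<1$; getting the constants to line up — in particular choosing $c_1$ large enough that each invocation of (\ref{3ss1}) costs only $\alpha/16$ — is the only technical point.
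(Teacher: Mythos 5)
Your proposal is correct and follows essentially the same route as the paper: the same extraction of the $k$ separated sets, the same two applications of the $(k{+}1)$-separation hypothesis (one with a $1/8$ slot, one with a $\kappa^6$ slot) combined with Lemma \ref{2.2l2}, the same use of Proposition \ref{2.2t5} to produce $X_0,X_1$, and the same pigeonhole/contradiction step to make both $I_0,I_1$ non-empty before enlarging to $B_0,B_1$. The only wrinkle you flag — that the pieces $X_{n_l}\cap C_{\alpha/8}(A_l)$ have mass $\kappa/4$ rather than $\kappa/2$ — is handled simply by invoking $\sep((M,\mu);\kappa/4,\ldots,\kappa/4,\kappa^6)\leq\sep((M,\mu);\kappa^6,\ldots,\kappa^6)$, so no re-parameterization is needed.
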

We consider two Borel probability measures $\mu_{B_i}$, $i=0,1$, defined by
  \begin{align*}
   \mu_{B_i}:=\frac{\mu|_{B_i}}{\mu(B_i)}.
   \end{align*}

  The following claim is essentially due to Gromov \cite{gromov} (see
  also \cite[Claim 5.10]{funa6}). He used it in the
  context of the convergence theory of mm-spaces without detailed
  proof. Since our context is different from his one, we include the
  proof for the concreteness of this paper. The proof below is shorter than
  the one in \cite[Claim 5.10]{funa6}.

\begin{claim}[{\cite[Section $3\frac{1}{2}.47$]{gromov}}]\label{3cl2}
  There exist a universal numeric constant $c_4>0$ and a coupling $\pi$ of
 $\mu_{B_0}$ and $\mu_{B_1}$ such that
  \begin{align*}
   \pi \Big(\Big\{(x,y) \in M\times M \mid \dist_M(x,y)> \frac{c_4\log
   \frac{1}{\kappa^2}}{\sqrt{\lambda_1(M,\mu)}}\Big\}\Big)\leq \kappa^6.
   \end{align*}
 \begin{proof}We use the identity
  $\di_{\lambda}(\mu_{B_0},\mu_{B_1})=\tra_{\lambda}(\mu_{B_0},
  \mu_{B_1})$ (Theorem \ref{2.3t3}). Put
  $\delta:=\frac{c_4}{\sqrt{\lambda_1(M,\mu)}}\log \frac{1}{\kappa^2}$,
  where $c_4>0$ is a numeric universal constant which will be determined later. We shall prove that
  \begin{align}\label{3s10}
   \mu_{B_1}(C_{\delta}(A))\geq \mu_{B_0}(A)-\kappa^6
   \end{align}for any Borel subset
  $A\subseteq B_1$, which implies the claim. In fact, applying (\ref{3s10})
  to Theorem \ref{2.3t3} gives that there exists a $\delta$-transportation $\pi_0$ from
  $\mu_{B_0}$ to $\mu_{B_1}$ such that $\mushi \pi_0 \leq \kappa^6$. If $\mushi \pi_0 =0$, then we set $\pi:=\pi_0$. If $\mushi
  \pi_0 >0$, then set
  \begin{align*}
   \pi:= \pi_0+\frac{1}{\mushi \pi_0 } (\mu_{B_0} - (\pr_1)_{\ast} \pi_0) \times
   (\mu_{B_1}- (\pr_2)_{\ast}\pi_0 ).
   \end{align*}It is easy to check that $\pi$ fulfills the desired
  property. 

  To prove (\ref{3s10}) we may assume that $\mu_{B_0}(A)\geq \kappa^6$,
  which yields that
  \begin{align*}\mu_{B_1}(A)\geq \mu(A)\geq \kappa^6 \mu(B_0)\geq
   \kappa^7 / 4.
   \end{align*}Using Lemma \ref{2.2l1} and (\ref{2.2sss1}) we choose $c_4>0$ so that
  \begin{align*}
   \sep \Big((B_1,\mu_{B_1}); \frac{\kappa^7}{4},
   \frac{\kappa^7}{4}\Big)
   \leq \ &\sep \Big((M,\mu); (1-\kappa^6)\frac{\kappa^7}{4},
   (1-\kappa^6)\frac{\kappa^7}{4}\Big)\\ \leq \ &
   \frac{c_4}{\sqrt{\lambda_1(M,\mu)}}\log \frac{1}{\kappa^2} \ (=\delta).
   \end{align*}Lemma \ref{2.2l2} implies that
  \begin{align*}
  \mu_{B_1}(C_{\delta}(A))\geq 1- \frac{\kappa^7}{4}\geq 1-\kappa^6\geq \mu_{B_0}(A)-\kappa^6,
   \end{align*}which is (\ref{3s10}).
  \end{proof}
 \end{claim}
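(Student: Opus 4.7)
The plan is to invoke Strassen's theorem (Theorem \ref{2.3t3}) to convert the existence of the desired coupling into a one-sided Prohorov-type inequality between $\mu_{B_0}$ and $\mu_{B_1}$, establish that inequality via separation estimates on $(B_1,\mu_{B_1})$, and then patch the resulting partial transportation into a genuine coupling by a standard rank-one correction. Set $\delta := \frac{c_4}{\sqrt{\lambda_1(M,\mu)}}\log\frac{1}{\kappa^2}$ with $c_4>0$ a universal constant to be chosen. The central intermediate target is the one-sided estimate
\begin{equation*}
\mu_{B_1}(C_\delta(A)) \geq \mu_{B_0}(A) - \kappa^6 \quad \text{for every Borel } A \subseteq B_1.
\end{equation*}

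Granted this estimate, the remark following Definition \ref{2.3d1} gives $\di_{\kappa^6/\delta}(\mu_{B_0}, \mu_{B_1})\leq \delta$, and then Theorem \ref{2.3t3} furnishes a $\delta$-transportation $\pi_0$ from $\mu_{B_0}$ to $\mu_{B_1}$ with $\df \pi_0 \leq \kappa^6$. If $\df\pi_0=0$ I simply set $\pi:=\pi_0$; otherwise I define
\begin{equation*}
\pi := \pi_0 + \frac{1}{\df\pi_0}\bigl(\mu_{B_0} - (\pr_1)_*\pi_0\bigr)\times \bigl(\mu_{B_1}-(\pr_2)_*\pi_0\bigr),
\end{equation*}
which is a probability measure with marginals $\mu_{B_0}$ and $\mu_{B_1}$ (a direct check on the total mass and each projection). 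Since $\pi_0$ is supported on $\{\dist_M\leq \delta\}$ and the correction term has total mass equal to $\df\pi_0\leq \kappa^6$, the set $\{\dist_M(x,y)>\delta\}$ carries at most $\kappa^6$ mass under $\pi$, which is exactly the claimed bound.

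To prove the one-sided Prohorov inequality I argue as follows. If $\mu_{B_0}(A)<\kappa^6$ there is nothing to show, so assume $\mu_{B_0}(A)\geq\kappa^6$. From $\mu(A)\geq \mu(A\cap B_0)=\mu_{B_0}(A)\mu(B_0)$ and the lower bound $\mu(B_0)\geq \kappa/4$ from Claim \ref{3cl1} I get $\mu(A)\geq \kappa^7/4$, hence $\mu_{B_1}(A)\geq \mu(A)\geq \kappa^7/4$ (using $\mu(B_1)\leq 1$ and $A\subseteq B_1$). Combining Lemma \ref{2.2l1} (applied with the ambient mass $\mu(B_1)\geq 1-\kappa^6$) with the Gromov--V.~Milman bound (\ref{2.2sss1}), I obtain
\begin{equation*}
\sep\!\Bigl((B_1,\mu_{B_1});\tfrac{\kappa^7}{4},\tfrac{\kappa^7}{4}\Bigr)\leq \sep\!\Bigl((M,\mu);(1-\kappa^6)\tfrac{\kappa^7}{4},(1-\kappa^6)\tfrac{\kappa^7}{4}\Bigr)\leq \frac{6}{\sqrt{\lambda_1(M,\mu)}}\log\frac{4}{(1-\kappa^6)\kappa^7},
\end{equation*}
which is at most $\delta$ once $c_4$ is chosen large enough (the right-hand side is a universal multiple of $\log(1/\kappa^2)/\sqrt{\lambda_1(M,\mu)}$ when $\kappa$ is small). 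Applying Lemma \ref{2.2l2} inside $(B_1,\mu_{B_1})$ with $k=1$ and the single subset $A$ then yields $\mu_{B_1}(C_\delta(A))\geq 1-\kappa^7/4\geq 1-\kappa^6\geq \mu_{B_0}(A)-\kappa^6$, as required.

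The main obstacle is the bookkeeping of the $\kappa$-exponents: one must absorb the discrepancy between $\log(4/((1-\kappa^6)\kappa^7))$ and the target scale $\log(1/\kappa^2)$ into a single universal constant $c_4$, while simultaneously ensuring that the cushion $\kappa^7/4$ arising in Lemma \ref{2.2l2} sits below the $\kappa^6$ threshold demanded by the conclusion. Both are achieved by restricting to sufficiently small $\kappa$ and taking $c_4$ correspondingly large, independently of $k$, $D$, and the manifold.
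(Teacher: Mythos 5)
Your proposal is correct and follows essentially the same route as the paper's own proof: the same reduction via Strassen's theorem to the one-sided Prohorov estimate $\mu_{B_1}(C_\delta(A))\geq\mu_{B_0}(A)-\kappa^6$, the same rank-one correction of the partial transportation, and the same chain $\mu_{B_1}(A)\geq\kappa^7/4$, Lemma \ref{2.2l1} plus (\ref{2.2sss1}), and Lemma \ref{2.2l2} to conclude. The only difference is that you spell out the constant bookkeeping slightly more explicitly, which the paper leaves implicit.
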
We set
  \begin{align*}
   \Delta:= \Big\{ (x,y)\in M\times M  \mid
  \dist (x,y)\leq \frac{c_4\log \frac{1}{\kappa^2}}{\sqrt{\lambda_1(M,\mu)}}  \Big\}.
   \end{align*}
  We consider two Borel probability measures $\mu_{0}:=a
  (\pr_1)_{\ast}(\pi |_{\Delta})$ and $\mu_{1}:=a
  (\pr_2)_{\ast}(\pi |_{\Delta})$, where $a:=
  \pi(\Delta)^{-1}$. By Claim \ref{3cl2} we have
  \begin{align}\label{3s11}
   1\leq a \leq \frac{1}{1-\kappa^6}
   \end{align}and
  \begin{align}\label{3s12}
   \dist_2^W(\mu_{0},\mu_{1})^2 \leq a  \int_{M \times
   M }\dist (x,y)^2 d \pi |_{\Delta}(x,y) \leq \Big\{ \frac{c_4\log \frac{1}{\kappa^2}}{\sqrt{\lambda_1(M,\mu)}}\Big\}^2.
   \end{align}Take an optimal dynamical transference
  plan $\Pi$ such that $(e_i)_{\ast}\Pi =\mu_i$ for each
  $i=0,1$. Putting $r:=\dist_{M}(B_0,B_1\setminus
  B_0)$,
  we consider
  \begin{align*}
   \Gamma_t := \{   \gamma \in \supp \Pi  \mid \dist_{M}(e_0(\gamma),
   e_t(\gamma))\leq r/2                        \}.
   \end{align*}By (\ref{3s12}) we have
  \begin{align*}
   \frac{r^2}{4}\Pi (\Gamma\setminus \Gamma_t)\leq
   \dist_2^W((e_0)_{\ast}\Pi,(e_t)_{\ast}\Pi)^2= t^2\dist_2^W(\mu_0,\mu_1)^2 \leq \Big\{ \frac{c_4t\log \frac{1}{\kappa^2}}{\sqrt{\lambda_1(M,\mu)}}\Big\}^2.
   \end{align*}According to Claim \ref{3cl1} we thus get
  \begin{align}\label{3s13}
   \Pi(\Gamma_t)\geq 1- \frac{c_5 t^2 \Big(\log \frac{1}{\kappa^2}\Big)^2}{\kappa^2}
   \end{align}for some universal numeric constant $c_5>0$. For $s\in [0,1]$ we put $\nu_s := (e_s)_{\ast}
  \frac{\Pi|_{\Gamma_t}}{\Pi(\Gamma_t)}$. By the definition of $\nu_s$ we obtain the following. 
  \begin{claim}\label{3cl3} $\supp \nu_t
    \cap B_1 \subseteq B_0$.
   \end{claim}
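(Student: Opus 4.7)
The plan is to pull each $x \in \supp\nu_t \cap B_1$ back along its geodesic to an endpoint $e_0(\gamma) \in B_0$, and then use the fact that the cut-off $\Gamma_t$ keeps this endpoint within distance $r/2$ of $x$, where $r := \dist_M(B_0, B_1 \setminus B_0) > 0$ by Claim \ref{3cl1}. Since $r$ is, by definition, the distance between $B_0$ and $B_1 \setminus B_0$, having $x$ within $r/2$ of $B_0$ forbids $x \in B_1 \setminus B_0$, so any $x \in B_1$ in the support of $\nu_t$ must lie in $B_0$.

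Carrying this out, I first observe that because $M$ is closed, $\Gamma$ is compact and $\Gamma_t$ is closed in $\Gamma$ (being the preimage of $[0, r/2]$ under the continuous function $\gamma \mapsto \dist_M(e_0(\gamma), e_t(\gamma))$). In particular $e_t(\Gamma_t)$ is closed, and the pushforward description of $\nu_t$ gives $\supp\nu_t \subseteq e_t(\Gamma_t)$, so any $x \in \supp\nu_t$ is genuinely of the form $x = e_t(\gamma)$ for an actual $\gamma \in \Gamma_t \subseteq \supp\Pi$. By continuity of $e_0$, the point $e_0(\gamma)$ lies in $\supp\mu_0$. Since $\mu_0 = a(\pr_1)_*(\pi|_\Delta) \leq a(\pr_1)_*\pi = a\mu_{B_0}$, and because $\mu$ has full support on $M$ while $B_0$ is closed, one has $\supp\mu_0 \subseteq \supp\mu_{B_0} = B_0$. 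Hence $e_0(\gamma) \in B_0$, and the defining condition $\gamma \in \Gamma_t$ combined with the triangle inequality yields $\dist_M(x, B_0) \leq \dist_M(x, e_0(\gamma)) \leq r/2 < r$, ruling out $x \in B_1 \setminus B_0$.

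I do not expect any substantive obstacle here; the argument is essentially a bookkeeping of supports of pushforward measures followed by a single application of the triangle inequality. The only points requiring a little care are the realization of $x$ as $e_t(\gamma)$ for an honest $\gamma \in \Gamma_t$, which uses compactness of $\Gamma$, and the inclusion $\supp\mu_0 \subseteq B_0$, which rests on the closedness of $B_0$ (asserted in Claim \ref{3cl1}) together with $\supp\mu = M$.
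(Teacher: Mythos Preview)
Your proof is correct and is precisely the unpacking of what the paper leaves implicit with the phrase ``By the definition of $\nu_s$ we obtain the following.'' The route is the same: since $\mu_0 \leq a\mu_{B_0}$ and $B_0$ is closed, any $\gamma \in \Gamma_t \subseteq \supp\Pi$ has $e_0(\gamma) \in B_0$, and the defining bound $\dist_M(e_0(\gamma),e_t(\gamma)) \leq r/2 < r = \dist_M(B_0, B_1\setminus B_0)$ forces $e_t(\gamma) \notin B_1\setminus B_0$; your additional care with compactness of $\Gamma$ to realize $x$ as an honest $e_t(\gamma)$ is a welcome detail the paper omits.
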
By using Claim \ref{3cl3}, we get
  \begin{align}\label{3s15}
   \log \mu (B_0) + \frac{\kappa^6}{\mu (B_0)}
   \geq \ & \log \mu (B_0)+ \log
   \Big(1+\frac{\kappa^6}{\mu (B_0)}\Big) \\
   = \ &\log (\mu (B_0) +\kappa^6) \tag*{}\\
   \geq \ &\log \{\mu (\supp \nu_t \cap B_1) +
   \mu (  \supp \nu_t      \setminus B_1)\}
   \tag*{}\\
   = \ & \log \mu (\supp \nu_t) \tag*{}
   \end{align}Note that $(\nu_s)_{s\in [0,1]}$ is a geodesic between
  $\nu_0$ and $\nu_1$. Since
  \begin{align}\label{3ss14} \nu_i= \frac{(e_i)_{\ast}\Pi|_{\Gamma_t}}{\Pi(\Gamma_t)} \leq
 \frac{(e_i)_{\ast}\Pi}{\Pi(\Gamma_t)} =  \frac{\mu_i}{\Pi(\Gamma_t)}\leq
   \frac{a}{\Pi(\Gamma_t)} (\pr_{i+1})_{\ast}\pi=
   \frac{a}{\Pi(\Gamma_t)}\mu_{B_i}
   \end{align}for $i=0,1$, each $\nu_i$ is absolutely continuous with
  respect to $\mu$, and especially the above geodesic $(\nu_s)_{s\in
  [0,1]}$ is unique. For each $i=0,1$, we write $d \nu_i = \rho_i d\mu$. By (\ref{3s1}), we get
  \begin{align}\label{3s14}
   &\log \mu (\supp \nu_t )  \geq   - (1- t)
   \int_{M} \rho_{0}\log \rho_{0} d\mu - t  \int_{M}
   \rho_{1} \log \rho_{1} d\mu.
   \end{align}For a subset $A\subseteq M$ we denote by $1_A$ the
  characteristic function of $A$, i.e., $1_A(x):=1$ if $x\in A$ and
  $1_A(x):=0$ if $x\in M\setminus A$.
\begin{claim}\label{3cl4}We have
 \begin{align*}
  \rho_{i}\log \rho_{i}\leq \frac{c_t
  1_{B_i}}{\mu (B_i)}\log  \frac{c_t
  1_{B_i}}{\mu(B_i)} \ \ (i=0,1), 
  \end{align*}where $c_t:= a/\Pi(\Gamma_t)$.
 \begin{proof}By (\ref{3ss14}) we have $\rho_{i}\leq (c_t
  /\mu(B_i))1_{B_i}$. Since
  $c_t\geq 1$ and $u \log u \leq v \log v$ for any two positive
  numbers $u,v$ such
  that $u\leq v$ and $v\geq 1$, we obtain the claim.
  \end{proof}
 \end{claim}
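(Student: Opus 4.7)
The plan is to reduce Claim \ref{3cl4} to an elementary monotonicity property of the function $f(u) = u\log u$. The key analytic input is already in place: from (\ref{3ss14}) we have $\nu_i \leq \frac{a}{\Pi(\Gamma_t)}\mu_{B_i} = c_t\mu_{B_i}$, and since $\mu_{B_i}$ has density $1_{B_i}/\mu(B_i)$ with respect to $\mu$, this translates to the pointwise $\mu$-a.e. bound $\rho_i \leq (c_t/\mu(B_i))\,1_{B_i}$.

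First I would verify that the ceiling value is at least one. From (\ref{3s11}) we have $a \geq 1$, while $\Pi(\Gamma_t) \leq 1$ because $\Pi$ is a probability measure, so $c_t = a/\Pi(\Gamma_t) \geq 1$. Combined with $\mu(B_i) \leq 1$, this yields $v := c_t/\mu(B_i) \geq 1$. Next I would split on whether a point lies in $B_i$. Off $B_i$ both sides of the desired inequality vanish (with the convention $0\log 0 := 0$), so there is nothing to check.

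On $B_i$ the claim reduces to $\rho_i(x)\log \rho_i(x) \leq v\log v$ under $0 \leq \rho_i(x) \leq v$ with $v \geq 1$. I would prove the elementary one-variable lemma: for $0 \leq u \leq v$ with $v \geq 1$, one has $u\log u \leq v\log v$. This splits into two cases. If $u \geq 1$, then $f(t) = t\log t$ is monotone increasing on $[1,\infty)$ and the bound is immediate. If $u < 1$, then $u\log u \leq 0 \leq v\log v$, the right inequality coming from $v \geq 1$. Assembling the two cases finishes the claim.

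There is no real obstacle here beyond bookkeeping; the point is to exploit the fact that although $f(u) = u\log u$ is not monotone on $[0,\infty)$, it \emph{is} dominated by $f(v)$ for all $u \in [0,v]$ as soon as $v \geq 1$, which is exactly the regime forced by $c_t \geq 1$ and $\mu(B_i) \leq 1$.
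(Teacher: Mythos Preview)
Your proposal is correct and follows essentially the same approach as the paper: both deduce the pointwise density bound $\rho_i \leq (c_t/\mu(B_i))1_{B_i}$ from (\ref{3ss14}), observe $c_t \geq 1$, and conclude via the elementary fact that $u\log u \leq v\log v$ whenever $0 \leq u \leq v$ and $v \geq 1$. You simply make explicit a few details the paper leaves implicit (the off-$B_i$ case, the verification of $c_t \geq 1$, and the two-case proof of the one-variable lemma).
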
Combining Claim \ref{3cl4} with (\ref{3s15}) and (\ref{3s14}) we have
  \begin{align*}
   & \log \mu (B_0) + \frac{\kappa^6}{\mu (B_0)}\\ \geq \ & -(1-t) \int_{M}\frac{c_t
  1_{B_0}}{\mu(B_0)}\log  \frac{c_t
  1_{B_0}}{\mu (B_0)} d\mu -t \int_{M} \frac{c_t
  1_{B_1}}{\mu (B_1)}\log  \frac{c_t
  1_{B_1}}{\mu (B_1)} d\mu  \\
   =\ & -c_t \log c_t + c_t (1-t)\log
   \mu(B_0)  + c_t t \log \mu(B_1).
   \end{align*}Substituting $t:=\kappa^3$, we thereby obtain
  \begin{align}\label{3s16}
   & \log (1/2) + 4\kappa^2\\ \geq 
   \ & \log \mu(B_0)+ \frac{\kappa^6}{\kappa^3 \mu(B_0)}\tag*{} \\ \geq \ & -
   \frac{c_t}{\kappa^3}\log c_t + \frac{c_t -1
   }{\kappa^3} (1-\kappa^3)\log \mu (B_0 ) + c_t \log \mu (B_1). \tag*{}
  \end{align}Using (\ref{3s11}) and (\ref{3s13}) we estimate each term on the
  right-side of the above inequalities as 
  \begin{align*}
   &\frac{c_t\log c_t}{\kappa^3}\\  =\ &
   \frac{a}{\Pi(\Gamma_t)}\cdot \frac{\log a - \log \Pi
   (\Gamma_t)}{\kappa^3}\\ \leq \ &\frac{1}{1-\kappa^6}  \Big(1-\frac{c_5\kappa^6 \Big(\log
   \frac{1}{\kappa^2}\Big)^2}{\kappa^2}\Big)^{-1}\\
   \ & \ \ \times \frac{1}{\kappa^3} \Big(\log \frac{1}{1-\kappa^6} - \log
   \Big(1-\frac{c_5\kappa^6 \Big(\log
   \frac{1}{\kappa^2}\Big)^2}{\kappa^2}\Big) \Big)\\ \leq \ &
   \frac{1}{1-\kappa^6} \Big(1-c_5 \kappa^4 \Big( \log
   \frac{1}{\kappa^2}\Big)^2\Big)^{-1}     \cdot 2 \Big(\kappa^3 + c_5\kappa \Big(\log \frac{1}{\kappa^2}\Big)^2\Big),
   \end{align*}
  \begin{align*}
       \Big|\frac{c_t -1 }{\kappa^3} \log \mu(B_0)\Big| \leq \ &\frac{a-\Pi(\Gamma_t)}{\kappa^3
   \Pi(\Gamma_t)}  \log \frac{2}{\kappa}\\ \leq \ &
   \frac{\frac{1}{1-\kappa^4}-1+c_5 \kappa^4\Big( \log
   \frac{1}{\kappa^2}\Big)^2}{\kappa^3 \Pi(\Gamma_t)}\log
   \frac{2}{\kappa}\\
   \leq \ &\kappa \frac{1+c_5(1-\kappa^4)\Big(\log
   \frac{1}{\kappa^2}\Big)^2}{(1-\kappa^4)\Big(1-c_5\kappa^4\Big(\log
   \frac{1}{\kappa^2}\Big)^2\Big)}\log \frac{2}{\kappa},
   \end{align*}and
  \begin{align*}
   |c_t\log \mu(B_1)|\leq \frac{a}{\Pi(\Gamma_t)}\log
   \frac{1}{1-\kappa^6}\leq \frac{2\kappa^6}{(1-\kappa^6)\Big(1-c_5\kappa^4\Big(\log\frac{1}{\kappa^2}\Big)\Big)}.
   \end{align*}These estimates imply the right-side of the
  inequalities (\ref{3s16}) is close to zero for sufficiently small
  $\kappa>0$. Since the left-side of the inequality (\ref{3s16}) is about $\log(1/2)<0$ for
  sufficiently small $\kappa>0$,
  this is a contradiction. This
  completes the proof of the theorem.
  \end{proof}

\section{Proof of Theorem \ref{extthm}}
On a closed weighted Riemannian manifold $(M,\mu)$, denote by
$(P_t)_{t\geq 0}$ the semigroup associated with the infinitesimal
generator $\Delta_{\mu}$. For each $t\geq 0$, $P_t:C^{\infty}(M)\to
C^{\infty}(M)$ is a bounded linear operator and we extend the action of $P_t$ to
$L^p(\mu)$ $(p\geq 1)$.

The following gradient estimate of the heat semigroup is due to
Bakry and Ledoux \cite{bakled}. One might regard it as a dimension-free
Li-Yau parabolic gradient inequality \cite{liyau2}.
\begin{lem}[{Bakry-Ledoux, \cite[Lemma 4.2]{bakled}}]\label{bakledlem}Let $(M,\mu)$ be a closed weighted
 Riemannian manifold of Bakry-\'Emery Ricci curvature bounded from below
 by a nonpositive real number $K$. Then for any $t\geq 0$ and $f\in C^{\infty}(M)$ we have
 \begin{align*}
  c(t)|\nabla P_t (f)|^2\leq P_t(f^2)-(P_t(f))^2,
  \end{align*}where
 \begin{align*}
  c(t):=\frac{1-\exp(2Kt)}{-K} \ (=2t \text{ if } K=0).
  \end{align*}
 \end{lem}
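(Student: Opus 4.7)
This is the classical Bakry--Ledoux interpolation along the heat semigroup. Fix $t>0$ and $f\in C^{\infty}(M)$, and set
\begin{align*}
\Lambda(s):=P_s\bigl((P_{t-s}f)^2\bigr), \qquad s\in[0,t],
\end{align*}
so that $\Lambda(0)=(P_tf)^2$ and $\Lambda(t)=P_t(f^2)$, and hence $P_t(f^2)-(P_tf)^2=\int_0^t\Lambda'(s)\,ds$. Using $\partial_sP_s g=P_s\Delta_\mu g=\Delta_\mu P_s g$ together with the pointwise identity $\Delta_\mu(g^2)-2g\Delta_\mu g=2|\nabla g|^2$ (a direct consequence of the classical product rule applied to $\Delta_\mu=\Delta-\nabla\psi\cdot\nabla$), a short calculation gives
\begin{align*}
\Lambda'(s)=2\,P_s\bigl(|\nabla P_{t-s}f|^2\bigr).
\end{align*}

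Differentiating once more yields $\Lambda''(s)=4P_s\bigl(\Gamma_2(P_{t-s}f)\bigr)$, where $\Gamma_2(g):=\tfrac12\Delta_\mu|\nabla g|^2-\langle\nabla g,\nabla\Delta_\mu g\rangle$. The curvature hypothesis enters through the weighted Bochner identity
\begin{align*}
\Gamma_2(g)=|\hess g|^2+\ric_\mu(\nabla g,\nabla g),
\end{align*}
obtained by applying the classical Bochner formula to $\tfrac12\Delta|\nabla g|^2$ and collecting the $\psi$-terms into $\hess\psi$. The assumption $\ric_\mu\geq K$ then produces the pointwise bound $\Gamma_2(g)\geq K|\nabla g|^2$, whence $\Lambda''(s)\geq 2K\Lambda'(s)$, or equivalently $(e^{-2Ks}\Lambda'(s))'\geq 0$.

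Integrating this differential inequality gives $\Lambda'(s)\geq e^{2Ks}\Lambda'(0)=2e^{2Ks}|\nabla P_tf|^2$, and then
\begin{align*}
P_t(f^2)-(P_tf)^2=\Lambda(t)-\Lambda(0)\geq 2|\nabla P_tf|^2\int_0^t e^{2Ks}\,ds=c(t)|\nabla P_tf|^2,
\end{align*}
since $2\int_0^t e^{2Ks}\,ds$ equals $(1-e^{2Kt})/(-K)$ when $K<0$ and $2t$ when $K=0$. The only substantive step is the weighted Bochner identity yielding $\Gamma_2(g)\geq K|\nabla g|^2$; this is a pointwise computation in which the $\nabla\psi$-correction terms reassemble precisely into $\hess\psi$. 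All remaining manipulations---commuting $\partial_s$ past $P_s$ and verifying that $\Lambda\in C^2([0,t])$---are automatic on the closed manifold $M$ for smooth $f$, since $P_sf$ is smooth with all derivatives uniformly controlled.
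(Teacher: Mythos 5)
Your proof is correct, and it is the standard Bakry--Ledoux semigroup interpolation argument: the paper does not prove this lemma at all but simply cites \cite[Lemma 4.2]{bakled}, so you have supplied exactly the argument that reference contains (interpolate via $\Lambda(s)=P_s((P_{t-s}f)^2)$, compute $\Lambda'=2P_s\Gamma(P_{t-s}f)$ and $\Lambda''=4P_s\Gamma_2(P_{t-s}f)$, invoke the weighted Bochner formula to get $\Gamma_2\geq K\Gamma$, and integrate the resulting differential inequality $\Lambda''\geq 2K\Lambda'$). One cosmetic caveat: the paper declares $\triangle$ to be the \emph{positive} Laplacian, under which convention $\partial_sP_sg=-\triangle_\mu P_sg$ and $\triangle_\mu(g^2)-2g\triangle_\mu g=-2|\nabla g|^2$; you have silently adopted the generator convention in both identities. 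The two sign flips cancel in the computation of $\Lambda'$, so your conclusion is unaffected, but you should fix the conventions once at the outset to keep the intermediate identities literally true.
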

\begin{cor}\label{bakledcor}If $(M,\mu)$ has nonnegative Bakry-\'Emery Ricci curvature,
 then for any $t\geq 0$, $p\geq 2$, and $f\in C^{\infty}(M)$, we have
 \begin{align*}
  \| |\nabla P_t (f)| \|_{L^p(\mu)}\leq \frac{1}{\sqrt{2t}}\| f\|_{L^p(\mu)}.
  \end{align*}
 \end{cor}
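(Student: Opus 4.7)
The plan is to derive the $L^p$-estimate directly from the pointwise gradient bound in Lemma~\ref{bakledlem}, combined with the two basic contractivity/invariance properties of the semigroup $(P_t)_{t \geq 0}$: Jensen's inequality for the Markov operator $P_t$, and the $\mu$-invariance $\int P_t g\,d\mu = \int g\,d\mu$.

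First, I would specialize Lemma~\ref{bakledlem} to $K = 0$, which yields $c(t) = 2t$, and drop the non-positive term $-(P_t f)^2$ to obtain the pointwise inequality
\begin{align*}
  |\nabla P_t(f)|^2 \leq \frac{1}{2t}\,P_t(f^2).
\end{align*}
Next, raising both sides to the power $p/2 \geq 1$ (pointwise) and integrating against $\mu$ gives
\begin{align*}
  \int_M |\nabla P_t(f)|^p\,d\mu \leq \frac{1}{(2t)^{p/2}} \int_M \bigl(P_t(f^2)\bigr)^{p/2}\,d\mu.
\end{align*}

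The second step is to bound $(P_t(f^2))^{p/2}$ by $P_t(|f|^p)$. Since $P_t$ is a Markov operator (positivity-preserving with $P_t 1 = 1$) and $u \mapsto u^{p/2}$ is convex on $[0,\infty)$ for $p \geq 2$, Jensen's inequality applied pointwise gives
\begin{align*}
  \bigl(P_t(f^2)\bigr)^{p/2} \leq P_t\bigl((f^2)^{p/2}\bigr) = P_t(|f|^p).
\end{align*}
Combining this with the $\mu$-invariance of $P_t$ (which follows from self-adjointness of $\Delta_\mu$ on $L^2(\mu)$, giving $\int P_t g\,d\mu = \int g\,d\mu$), we conclude
\begin{align*}
  \int_M |\nabla P_t(f)|^p\,d\mu \leq \frac{1}{(2t)^{p/2}} \int_M P_t(|f|^p)\,d\mu = \frac{1}{(2t)^{p/2}} \int_M |f|^p\,d\mu.
\end{align*}
Taking $p$-th roots finishes the proof.

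There is no real obstacle here: the corollary is essentially a book-keeping consequence of Lemma~\ref{bakledlem}. The only items requiring some care are the standard properties of the semigroup associated with $\Delta_\mu$ on a closed weighted manifold, namely that $(P_t)_{t \geq 0}$ is positivity-preserving, Markov ($P_t 1 = 1$), and $\mu$-symmetric, all of which hold since $\mu$ is the reversible invariant measure for $\Delta_\mu$. The case $t = 0$ is trivial (or is interpreted as the limit) and we may assume $t > 0$ throughout.
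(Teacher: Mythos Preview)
Your argument is correct and is precisely the standard derivation: apply Lemma~\ref{bakledlem} with $K=0$, drop the $-(P_tf)^2$ term, raise to the $p/2$-power, use Jensen for the Markov operator $P_t$, and then $\mu$-invariance. The paper does not spell out a proof of this corollary at all (it is stated as an immediate consequence of Lemma~\ref{bakledlem}), so your write-up is exactly the intended filling-in of the gap.
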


 From Corollary \ref{bakledcor} Ledoux obtained the following lemma:
 \begin{lem}[{Ledoux, \cite[(5.5)]{led}}]\label{ledlem}Assume that $(M,\mu)$ has
  nonnegative Bakry-\'Emery Ricci 
  curvature. Then for any $f\in
  C^{\infty}(M)$, we have
  \begin{align*}
   \| f-P_t(f)\|_{L^1(\mu)}\leq \sqrt{2t}\| |\nabla f|\|_{L^1(\mu)}.
   \end{align*}
  \end{lem}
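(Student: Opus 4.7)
The plan is a duality argument combined with the Bakry--Ledoux pointwise gradient estimate (Lemma \ref{bakledlem}) applied to the test function rather than to $f$. Since $\mu$ is a Borel probability measure on a closed manifold, we have
\begin{align*}
\| f - P_t (f) \|_{L^1(\mu)} = \sup\Big\{ \int_M g\,(f - P_t(f))\, d\mu : g\in C^\infty(M),\ \| g\|_\infty \leq 1 \Big\},
\end{align*}
so it suffices to control $\int_M g\,(f-P_t(f))\, d\mu$ uniformly over such test functions $g$.

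Next I would use the fundamental theorem of calculus along the heat flow: since $\partial_s P_s(f) = -\triangle_\mu P_s(f)$, one has
\begin{align*}
f - P_t(f) = \int_0^t \triangle_\mu P_s(f)\, ds.
\end{align*}
Fixing $s>0$ and commuting the Laplacian with the semigroup, then using self-adjointness of $P_s$ on $L^2(\mu)$ and integration by parts on the closed manifold, I would write
\begin{align*}
\int_M g\, \triangle_\mu P_s(f)\, d\mu = \int_M g\, P_s (\triangle_\mu f)\, d\mu = \int_M P_s(g)\, \triangle_\mu f\, d\mu = \int_M \nabla P_s(g)\cdot \nabla f\, d\mu.
\end{align*}
The key point of this rearrangement is that the derivative has been transferred off of the semigroup and onto the smooth test function $g$, after which the smoothing of $P_s$ acts on $g$ (which is only assumed bounded) rather than on $f$.

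At this stage I would apply Lemma \ref{bakledlem} with $K=0$ and $c(s)=2s$ to $g$: pointwise,
\begin{align*}
2s\, | \nabla P_s(g)|^2 \leq P_s(g^2) - (P_s(g))^2 \leq P_s(g^2) \leq \| g\|_\infty^2,
\end{align*}
so $\| |\nabla P_s(g)| \|_\infty \leq \| g\|_\infty / \sqrt{2s}$. Combining this with the previous display via H\"older's inequality yields
\begin{align*}
\Big|\int_M g\,\triangle_\mu P_s(f)\, d\mu \Big| \leq \frac{\| g\|_\infty}{\sqrt{2s}}\, \| |\nabla f|\|_{L^1(\mu)}.
\end{align*}
Integrating in $s$ from $0$ to $t$ (the singularity at $s=0$ is integrable), I obtain
\begin{align*}
\Big| \int_M g\,(f - P_t(f))\, d\mu \Big| \leq \| g\|_\infty\,\| |\nabla f|\|_{L^1(\mu)} \int_0^t \frac{ds}{\sqrt{2s}} = \sqrt{2t}\, \| g\|_\infty\, \| |\nabla f|\|_{L^1(\mu)},
\end{align*}
and taking the supremum over $g$ gives the claimed inequality.

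The main technical nuisance is not conceptual but bookkeeping: one must justify interchanging $\int_0^t$ with $\int_M$, the commutation $\triangle_\mu P_s = P_s \triangle_\mu$, and the integration by parts, which all rely on the smoothness of $P_s(g)$ for $s>0$ and of $f$, and on the self-adjointness of $P_s$; these are standard on a closed manifold. Optimizing the split of $P_s$ into $P_{s\theta}P_{s(1-\theta)}$ and letting $\theta\to 1$ (loading all the smoothing onto $g$, none onto $f$) is what produces the sharp constant $\sqrt{2t}$ rather than the $2\sqrt{t}$ that one gets from the symmetric split $\theta=1/2$.
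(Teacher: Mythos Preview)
Your argument is correct and is essentially Ledoux's original proof: the paper does not supply its own proof of Lemma~\ref{ledlem} but simply cites \cite[(5.5)]{led}, and the duality--plus--time-integration scheme you carry out is precisely what appears there. The only minor remark is that your final paragraph about optimizing a split $P_s = P_{s\theta}P_{s(1-\theta)}$ is extraneous, since your computation already loads all of the semigroup smoothing onto $g$ and delivers the constant $\sqrt{2t}$ directly; no further optimization is needed.
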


  \begin{proof}[Proof of Theorem \ref{extthm}]Take any $k+1$ non-empty, disjoint Borel subsets $A_0,A_1,\cdots,A_k\subseteq
   M$. We may assume that $\mu(A_0)\leq \mu(A_1)\leq \cdots \leq
   \mu(A_k)$, and thus
   \begin{align*}\sum_{i=0}^{k-1}\mu(A_i)\leq 1-\frac{1}{k+1} \text{ and }
    \mu(A_i)\leq 1/2 \text{ for any }i=0,1,\cdots, k-1.
    \end{align*}We put $t:=4k(k+1)/\lambda_k(M,\mu)$. We shall prove
   that there exists $i_0$, $0\leq i_0\leq k-1$, such that
   \begin{align}\label{saigo}
    \mu^+(A_{i_0})\geq (80k^3)^{-1}\sqrt{\lambda_k(M,\mu)}\mu(A_{i_0}).
    \end{align}
   For each
   $i=0,1,\cdots,k-1$, let
   $1_{A_i,\e}(x):=\min\{0,1-\frac{1}{\e}\dist(x,A_i)\}$ denote a
   Lipschitz approximation of $1_{A_i}$. Note that
   \begin{align*}
    \frac{\mu(C_{\e}(A_i))-\mu(A_i)}{\e}\geq \int_M |\nabla 1_{A_{i},\e}|d\mu,
    \end{align*}where for a Lipschitz function $f:M\to \mathbb{R}$ and $x\in M$, we put
\begin{align*}
 |\nabla f|(x):=\limsup_{y\to x} \frac{|f(y)-f(x)|}{\dist_M(y,x)}.
 \end{align*}Letting $\e\to 0$, by Lemma \ref{ledlem} we have
   \begin{align*}
    \sqrt{2t}\mu^+(A_i)\geq \| 1_{A_i}-P_t(1_{A_i})\|_{L^1(\mu)}.
    \end{align*}Since the right-side of the above inequality can be
   written as
   \begin{align*}& \int_{A_i}(1-P_t(1_{A_i}))d\mu+\int_{M\setminus
    A_{i}}P_t(1_{A_i})d\mu\\ =\ &2\Big(\mu(A_i)-\int_{A_i}P_t(1_{A_i})d\mu\Big)\\
    = \ &2 \Big(\mu(A_i)(1-\mu(A_i))-
    \int_M(P_t(1_{A_i})-\mu(A_i))(1_{A_i}-\mu(A_i))d\mu \Big),
    \end{align*}we obtain
   \begin{align}\label{s4s1}
    & \sqrt{2t}\mu^+(A_i)\\ \geq \ & 2\Big(\mu(A_i)(1-\mu(A_i))-
    \int_M(P_t(1_{A_i})-\mu(A_i))(1_{A_i}-\mu(A_i))d\mu \Big). \tag*{}
    \end{align}
Observe that $P_t(1_{A_i})-\mu(A_i)$, $i=0,1,\cdots,k-1$, are linearly
   independent and orthogonal to constant functions on $M$. Thus the
   Rayleigh quotient representation of $\lambda_k(M,\mu)$ yields that
   there exist $a_0,a_1,\cdots,a_{k-1}\in \mathbb{R}$ such that
   \begin{align}\label{s4s2}
    \lambda_k(M,\mu)\leq
    \frac{\||\nabla(\sum_{i=0}^{k-1}a_i(P_t(1_{A_i})-\mu(A_i)))|\|_{L^2(\mu)}^2}{\|\sum_{i=0}^{k-1}a_i(P_t(1_{A_i})-\mu(A_i))\|_{L^2(\mu)}^2}.
    \end{align}
   Put $f_0:= \sum_{i=0}^{k-1}a_i 1_{A_i}$. We consider the following two cases: (I) $\|f_0-\int_M f_0 d\mu
   \|_{L^2(\mu)}\geq 2\| f_0-P_t (f_0)\|_{L^2(\mu)}$, (II) $ \|f_0-\int_M f_0 d\mu
   \|_{L^2(\mu)}\leq 2\| f_0-P_t (f_0)\|_{L^2(\mu)}$.

   We prove that the case (I) cannot happen from the our choice of
   $t$. Suppose that (I) holds. In this case we get
   \begin{align}\label{s4s3}
   \|\sum_{i=0}^{k-1}a_i(P_t(1_{A_i})-\mu(A_i))\|_{L^2(\mu)}=\ & \Big\|
    P_t(f_0)-\int_M f_0 d\mu  \Big\|_{L^2(\mu)}\\ \geq \ &
    \frac{1}{2}\Big\|f_0 - \int_Mf_0 d\mu\Big\|_{L^2(\mu)}. \tag*{}
    \end{align}We estimate the right-side of the above inequality
   from below:
   \begin{claim}\label{4claim1}We have
    \begin{align*}
     \int_M \Big(\sum_{i=0}^{k-1} a_i (1_{A_i}-\mu(A_i))\Big)^2d\mu \geq
     \frac{1}{k+1}\sum_{i=0}^{k-1} a_i^2 \int_M (1_{A_i}-\mu(A_i))^2 d\mu.
     \end{align*}
    \begin{proof}Since
     \begin{align}\label{4claims1}
      \int_M (1_{A_i}-\mu(A_i))^2d\mu=\mu(A_i)(1-\mu(A_i))
      \end{align}and 
     \begin{align*}
      \int_M \Big(\sum_{i=0}^{k-1} a_i (1_{A_i}-\mu(A_i))\Big)^2d\mu=
      \sum_{i=0}^{k-1}a_i^2\mu(A_i)-\Big(\sum_{i=0}^{k-1}a_i \mu(A_i)\Big)^2,
      \end{align*}it suffices to prove
     \begin{align}\label{4claims2}
      \Big(1-\frac{1}{k+1} \Big)\sum_{i=0}^{k-1}a_i^2
      \mu(A_i)+\frac{1}{k+1}\sum_{i=0}^{k-1}a_i^2\mu(A_i)^2\geq \Big(\sum_{i=0}^{k-1}a_i \mu
      (A_i)\Big)^2.
      \end{align}
     Since
     \begin{align*}\Big( \sum_{i=0}^{k-1}a_i \mu(A_i)\Big)^2 =\ &
      \Big(\sum_{j
      =0}^{k-1}\mu(A_j)\Big)^2\cdot \Big(\sum_{i=0}^{k-1}
      \frac{\mu(A_i)}{\sum_{j=0}^{k-1}\mu(A_j)}a_i \Big)^2 \\
      \leq \ & \Big(\sum_{j=0}^{k-1} \mu(A_j) \Big)^2\sum_{i=0}^{k-1}
      \frac{\mu(A_i)}{\sum_{j=0}^{k-1}\mu(A_j)}a_i^2\\
      \leq \ & \Big(1-\frac{1}{k+1}\Big)\sum_{i=0}^{k-1}a_i^2 \mu(A_i),
      \end{align*}we have (\ref{4claims2}). This completes the proof of the claim.
     \end{proof}
    \end{claim}Claim \ref{4claim1} together with (\ref{s4s2}) and (\ref{s4s3}) implies
   the existence of $i_0$, $0\leq i_0\leq k-1$, such that
   \begin{align*}
    \lambda_k(M,\mu)\| 1_{A_{i_0}}-\mu(A_{i_0})\|_{L^2(\mu)}^2\leq
    4k(k+1) \||\nabla P_t(1_{A_{i_0}})|\|_{L^2(\mu)}^2.
    \end{align*}Using Corollary \ref{bakledcor} and $t=4k(k+1)/\lambda_k(M,\mu)$ we obtain
   \begin{align*}
    \lambda_k(M,\mu)\|1_{A_{i_0}-\mu(A_{i_0})}\|_{L^2(\mu)}^2\leq \ &
    \frac{2k(k+1)}{t}\| 1_{A_{i_0}}-\mu(A_{i_0})\|_{L^2(\mu)}^2\\
    = \ & 2^{-1} \lambda_k(M,\mu)\|1_{A_{i_0}}-\mu(A_{i_0}) \|_{L^2(\mu)}^2,
   \end{align*}which is a contradiction.

   Since (II) holds, Lemma \ref{ledlem} yields
   \begin{align}\label{s4s5}
    \frac{1}{4}\Big\| f_0-\int_M f_0
    d\mu\Big\|_{L^2(\mu)}^2\leq \ &\| P_t(f_0)-f_0 \|_{L^2(\mu)}^2\\
    \leq \ & k \sum_{i=0}^{k-1}a_i^2 \|
    P_t(1_{A_i})-1_{A_i}\|_{L^2(\mu)}^2   \tag*{}\\
    \leq \ & k \sum_{i=0}^{k-1}a_i^2 \|
    P_t(1_{A_i})-1_{A_i}\|_{L^1(\mu)} \tag*{}\\
    \leq\ & k\sqrt{2t}\sum_{i=0}^{k-1}a_i^2 \| |\nabla 1_{A_i}|\|_{L^1(\mu)} \tag*{} \\
    = \ & k\sqrt{2t} \sum_{i=0}^{k-1}a_i^2\mu^{+}(A_i). \tag*{}
    \end{align}According to Claim \ref{4claim1} and (\ref{s4s5}), there exists
   $i_0$, $0\leq i_0\leq k-1$, such that
   \begin{align*}
    \| 1_{A_{i_0}}-\mu(A_{i_0})\|_{L^2(\mu)}^2\leq 4k(k+1)\sqrt{2t}\mu^+(A_{i_0}).
    \end{align*}Thus we get
   \begin{align*}
     \int_M
    (P_t(1_{A_{i_0}})-\mu(A_{i_0}))(1_{A_{i_0}}-\mu(A_{i_0}))d\mu \leq
    \ & \|1_{A_{i_0}}-\mu(A_{i_0})\|_{L^2(\mu)}^2\\ \leq \ &
    4k(k+1)\sqrt{2t}\mu^+(A_{i_0}).
    \end{align*}Since $\mu(A_{i_0})\leq 1/2$, it follows from (\ref{s4s1}) that
   \begin{align*}
    (8k^2+8k+1)\sqrt{2t}\mu^+(A_{i_0})\geq
    2\mu(A_{i_0})(1-\mu(A_{i_0}))\geq \mu(A_{i_0}).
    \end{align*}Recalling that $t=4k(k+1)/\lambda_k(M,\mu)$, we
   finally obtain
   \begin{align*}
    \mu^+(A_{i_0})\geq
    \frac{\sqrt{\lambda_k(M,\mu)}}{(16k(k+1)+2)\sqrt{2k(k+1)}}\mu(A_{i_0})\geq \frac{\sqrt{\lambda_k(M,\mu)}}{80k^3}\mu(A_{i_0}),
    \end{align*}which implies (\ref{saigo}). This completes the proof of the theorem.
   \end{proof}

\begin{rem}\upshape From the proof of \cite{bakled} Bakry-Ledoux's lemma (Lemma \ref{bakledlem}) follows from
 the following Bakry-\'Emery type $L^2$-gradient estimate:
 \begin{align}\label{22grad}
  |\nabla P_t (f)|^2(x)\leq e^{-2 K t}P_t(|\nabla f|^2)(x)
  \end{align}for any Lipschitz function $f$ and any $x\in X$. Gigli,
 Kuwada, and Ohta proved the gradient estimate (\ref{22grad}) for compact finite-dimensional
 Alexandrov spaces satisfying CD($K,\infty$) (\cite[Theorem 4.3]{gko}). Here
 Alexandrov spaces are metric spaces whose 'sectional curvature' is bounded
     from below in the sense of the triangle comparison property. 
In particular the same argument in this section implies that Theorem \ref{extthm} holds for compact finite-dimensional Alexandrov spaces
 satisfying CD($0,\infty$). Refer to \cite{Kuwae} for
 the Laplacian on Alexandrov spaces. We remark that Theorem \ref{leethm}
 holds for compact finite-dimensional Alexandrov spaces from the proof
 of \cite{lgt}. Consequently the $k$-th eigenvalue of Laplacian and the $k$-way isoperimetric constant are equivalent up to polynomials
 of $k$ for compact finite-dimensional Alexandrov spaces satisfying
 CD$(0,\infty)$. In particular it is also valid for compact finite-dimensional Alexandrov spaces of nonnegative curvature, since such spaces
 satisfy CD$(0,\infty)$ (\cite{pet}, \cite{zhang-zhu}). 
 \end{rem}
\section{Rough stability of eigenvalues of the weighted Laplacian and multi-way
 isoperimetric constants}\label{stabilitysection}

We first review the concentration topology. Recall that the \emph{Hausdorff distance} between two closed
subsets $A$ and $B$ in a metric space $X$ is defined by
\begin{align*}
  \dist_H (A,B):= \inf \{\;\varepsilon >0 \mid A \subseteq C_{\varepsilon}(B),
  \ B \subseteq C_{\varepsilon}(A)\;\}.
\end{align*}

Let $(I,\mu)$ be a probability space.
We denote by
$\mathcal{F}(I, \mathbb{R})$ the space of all $\mu$-measurable
functions on $I$. Given 
$\lambda \geq 0$ and $f, g \in \mathcal{F}(I, \mathbb{R})$, we put 
\begin{align*}
  \me_{\lambda}(f,g):= \inf \{\;\varepsilon >0 \mid
  \mu(|f-g|> \varepsilon)\leq \lambda \varepsilon\;\},
\end{align*}where $\mu(|f-g|>\e):=\mu( \{ x\in I \mid |f(x)-g(x)|>\e
  \})$. Note that, if any two functions $f,g \in \mathcal{F}(I, \mathbb{R})$
with $f = g$ a.e. are identified to each other,
then $\me_{\lambda}$ is a distance function on $\mathcal{F}(I,\mathbb{R})$
for any $\lambda \geq 0$ and its topology on
$\mathcal{F}(I, \mathbb{R})$ coincides with the topology of the 
convergence in measure for any $\lambda >0$. The distance functions
$\me_{\lambda}$ for all $\lambda >0$ are mutually equivalent. 

Let $\dist$ be a \emph{semi-distance function on $I$},
i.e., a nonnegative symmetric function on $I\times I$
satisfying the triangle inequality.
We indicate by $\lip_1 (\dist)$ the space of
all $1$-Lipschitz functions on $I$ with respect to $\dist$.
Note that $\lip_1 (\dist)$ is a closed subset in
$(\mathcal{F}(I,\mathbb{R}),\me_{\lambda})$ for any $\lambda\geq 0$.
For $\lambda \geq 0$ and two semi-distance functions
$\dist$ and $\dist'$ on $I$, we define
  \begin{align*}
   \bobd (\dist ,\dist'):= \dist_{H}( \lip_1(\dist ), \lip_1 (\dist') ),
   \end{align*}
where $\dist_H$ is the Hausdorff distance
function in $(\mathcal{F}(X,\mathbb{R}),\me_{\lambda})$.
$\bobd$ is a distance function on the space of all
semi-distance functions on
$X$ for all $\lambda \geq 0$, and the two distance functions $\bobd$ and
$H_{\lambda'}\mathcal{L}\iota_{1}$ are equivalent to each other for any
$\lambda, \lambda' >0$.
We denote by $\mathcal{L}$ the Lebesgue measure on $\mathbb{R}$.

For any mm-space $X$ there exists a Borel measurable map
$\varphi :[\,0,1\,)\to X$ with $\varphi_{\ast}\mathcal{L}=\mu_X$
(see \cite[Theorem 17.41]{kechris}).
We call such a map $\varphi$ a \emph{parameter of $X$}.
Note that a parameter of $X$ is not unique in general.
For a parameter $\varphi$ of $X$,
we define a function $\varphi^{\ast}\dist_X:[\,0,1\,)\times [\,0,1\,)\to \mathbb{R}$
by $\varphi^{\ast}\dist_X(s,t):=\dist_X(\varphi(s),\varphi(t))$
for any $s,t \in [\,0,1\,)$. 

\begin{dfn}[Observable distance function]\label{pgd1}
  For two mm-spaces $X$ and $Y$ we define
  \begin{align*}
    \obd (X,Y):= \inf \bobd (\varphi_{X}^{\ast}\dist_X, \varphi_{Y}^{\ast}\dist_Y),
  \end{align*}
  where the infimum is taken over all parameters
  $\varphi_X :[\,0,1\,)\to X$ and $\varphi_{Y}:[\,0,1\,)\to Y$. 
\end{dfn}

We say that two mm-spaces are \emph{isomorphic} to each other if
there is a measure preserving isometry between the spaces.
Denote by $\mathcal{X}$ the space of isomorphic classes
of mm-spaces. The function $\obd$ is a distance function on $\mathcal{X}$
for any $\lambda\geq 0$. Note that $\obd$ and
$\underline{H}_{\lambda'}\mathcal{L}\iota_1$ are equivalent to each
other for any $\lambda, \lambda' > 0$.

\begin{dfn}[Concentration topology]\upshape We say that a sequence of mm-spaces $X_n$, $n=1,2,\cdots, $
 \emph{concentrates to} an mm-space $Y$ if $X_n$ converges to $Y$ as
 $n\to \infty$ with
 respect to $\obdd$. The topology on the set $\mathcal{X}$ induced by the
 observable distance function is called the \emph{concentration topology}.
 \end{dfn}

 The term 'concentration topology' comes from the following: We say that a sequence of mm-spaces $\{X_n\}$ is a \emph{L\'evy family}
 if $\lim_{n\to \infty}\alpha_{X_n}(r)=0$ for any $r>0$. Due to L\'evy's
 lemma (\cite{levy}, \cite[Proposition 1.3]{ledoux}) we obtain the following:
 \begin{prop}[{\cite{gromov}}]
 A sequence $\{X_n\}_{n=1}^{\infty}$ of mm-spaces is a L\'evy family if
  and only if it concentrates to the one-point mm-space. 
  \end{prop}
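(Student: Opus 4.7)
The plan is to exploit the fact that the one-point mm-space $\ast$ admits essentially a unique parameter $\varphi:[0,1)\to\ast$, for which the pullback semi-distance $\varphi^\ast\dist_\ast$ is identically zero; hence $\lip_1(\varphi^\ast\dist_\ast)$ is exactly the set of $\mathcal{L}$-measurable constant functions on $[0,1)$. Consequently $\obdd(X_n,\ast)\leq\e$ is equivalent to the existence of a parameter $\varphi_n$ of $X_n$ such that every $f\in\lip_1(\varphi_n^\ast\dist_{X_n})$ lies $\me_1$-within $\e$ of some constant; the reverse inclusion in the Hausdorff distance is automatic, since every constant is $1$-Lipschitz with respect to $\varphi_n^\ast\dist_{X_n}$ and therefore already belongs to $\lip_1(\varphi_n^\ast\dist_{X_n})$.

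For the direction $(\Rightarrow)$, assume $\{X_n\}$ is a L\'evy family and fix $\e>0$. The L\'evy-type inequality, obtained by applying the definition of $\alpha_{X_n}$ to $A=\{f\leq m_f\}$ and $A=\{f\geq m_f\}$ for a median $m_f$ of a $1$-Lipschitz function $f$ on $X_n$, gives
\[
\mu_{X_n}(|f-m_f|>r)\leq 2\alpha_{X_n}(r)\qquad\text{for all }r>0.
\]
Pick $n$ large enough that $2\alpha_{X_n}(\e)\leq\e$, let $\varphi_n$ be any parameter of $X_n$, and observe that every $f\in\lip_1(\varphi_n^\ast\dist_{X_n})$ is the pullback of a $1$-Lipschitz function on $X_n$. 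Then $\me_1(f,m_f)\leq\e$ for all such $f$, which together with the trivial reverse inclusion gives $\obdd(X_n,\ast)\leq\e$.

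For the direction $(\Leftarrow)$, assume $X_n\to\ast$ in concentration and fix $r>0$, $\e>0$. Set $\e':=\min(\e,r/3)$, and for $n$ sufficiently large choose a parameter $\varphi_n$ of $X_n$ realizing Hausdorff distance (in $\me_1$) at most $\e'$ between $\lip_1(\varphi_n^\ast\dist_{X_n})$ and the constants. For any Borel set $A\subseteq X_n$ with $\mu_{X_n}(A)\geq 1/2$, the truncated distance function
\[
f(x):=\min\{\dist_{X_n}(x,A),\,r\}
\]
is $1$-Lipschitz, vanishes on $A$, and equals $r$ on $X_n\setminus O_r(A)$. Hence there is a constant $c_n$ with $\mathcal{L}(\{|f\circ\varphi_n-c_n|>\e'\})\leq\e'$. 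Since $\{f\circ\varphi_n=0\}\supseteq\varphi_n^{-1}(A)$ has $\mathcal{L}$-measure $\geq 1/2>\e'$, this set meets $\{|f\circ\varphi_n-c_n|\leq\e'\}$, forcing $|c_n|\leq\e'$. Because $r>2\e'\geq|c_n|+\e'$, the set $\{f\circ\varphi_n=r\}$ is contained in $\{|f\circ\varphi_n-c_n|>\e'\}$ and thus has $\mathcal{L}$-measure at most $\e'$. The parameter identity $(\varphi_n)_\ast\mathcal{L}=\mu_{X_n}$ then gives $\mu_{X_n}(X_n\setminus O_r(A))\leq\e'\leq\e$, and taking the supremum over $A$ yields $\alpha_{X_n}(r)\leq\e$. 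Hence $\alpha_{X_n}(r)\to 0$ for every $r>0$.

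The only substantive direction is $(\Leftarrow)$, where one must convert the abstract statement ``every $1$-Lipschitz function is approximately constant'' into the specific isoperimetric-type statement defining $\alpha_{X_n}$. The main obstacle is essentially calibrating $\e'$ against the scale $r$: one needs the forced bound $|c_n|\leq\e'$ to strictly separate $c_n$ from the second level value $r$, which is ensured by taking $\e'<r/2$. The truncated distance-to-$A$ function is chosen precisely because its two level values $0$ and $r$ directly encode $A$ and the complement of $O_r(A)$.
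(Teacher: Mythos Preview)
Your proof is correct and aligns with the paper's approach: the paper does not actually supply a proof of this proposition but merely attributes it to Gromov and points to L\'evy's lemma (\cite[Proposition~1.3]{ledoux}) as the key ingredient, which is precisely what you use for the $(\Rightarrow)$ direction; your $(\Leftarrow)$ direction via the truncated distance function is the natural converse argument. One tiny caveat: in the $(\Leftarrow)$ step you implicitly use $\e'<1/2$ when arguing that the two sets must intersect, so you should note (harmlessly) that one may assume $\e<1/2$ since $\alpha_{X_n}(r)\leq 1/2$ always.
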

For example, the sequence of $n$-dimensional unit spheres in
$\mathbb{R}^{n+1}$, $n=1,2,\cdots$, concentrates to the one-point space
by L\'evy's result(\cite{levy}).

 The concentration topology is strictly weaker than the measured Gromov-Hausdorff
 topology on the space of mm-spaces (\cite{funa}). We mention that the concentration
 topology coincides with the measured Gromov-Hausdorff topology on the
 set of mm-spaces satisfying CD$(K,N)$ for fixed $K$ and
 $N<+\infty$. In fact, the set becomes compact with respect to the
 measured Gromov-Hausdorff topology because we have the doubling
 condition with a uniform doubling constant under the condition CD$(K,N)$.

 Answering a conjecture by Fukaya in \cite{fukaya}, Cheeger and Colding proved the
 continuity of eigenvalues of Laplacian on Riemmanian manifolds with respect to the measured
 Gromov-Hausdorff topology under the condition CD$(K, N)$ for fixed
 $K,N \in \mathbb{R}$ (\cite{ch-co}). We consider an analogy of
 the above Cheeger-Colding result with respect to the concentration topology:
\begin{cor}\label{stab}There exists a universal numeric constant $c>0$ satisfying the following. Let $\{ (M_n,\mu_n)\}$ be a sequence of closed weighted Riemannian
 manifolds of nonnegative Bakry-\'Emery Ricci curvature and assume that
 the sequence concentrates to
 a closed weighted Riemannian manifold $(M_{\infty},\mu_{\infty})$. Then for any natural number
 $k$ we have 
 \begin{align}\label{stabs1}
  \limsup_{n\to \infty} \max \Big\{
  \frac{\lambda_k(M_n,\mu_n)}{\lambda_k(M_{\infty},\mu_{\infty})},
  \frac{\lambda_k(M_{\infty},\mu_{\infty})}{\lambda_k(M_n,\mu_n)}\Big\}\leq  \exp
  (c k)
  \end{align}and
 \begin{align}\label{stabs2}
   \limsup_{n\to \infty} \max\Big\{
    \frac{h_k(M_n,\mu_n)}{h_k(M_{\infty},\mu_{\infty})},
    \frac{h_k(M_{\infty},\mu_{\infty})}{h_k(M_n,\mu_n)}     \Big\}\leq
  k^3 \exp (c k).
  \end{align}
 \end{cor}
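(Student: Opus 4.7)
The strategy is to reduce the general $k$ case to $k=1$ via the two main theorems of the paper, and then handle $k=1$ by exploiting the equivalence between $\lambda_1$ (resp.\ $h_1$) and the observable diameter together with the continuity of the observable diameter under the concentration topology.

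First I reduce to $k=1$. Theorem \ref{Mthm} combined with the trivial bound $\lambda_1 \leq \lambda_k$ gives
\begin{align*}
\frac{\lambda_k(M_n,\mu_n)}{\lambda_k(M_\infty,\mu_\infty)} \leq \frac{\exp(ck)\,\lambda_1(M_n,\mu_n)}{\lambda_1(M_\infty,\mu_\infty)}
\end{align*}
and the symmetric inequality, so \eqref{stabs1} follows from the corresponding bound with $k=1$ and the constant $\exp(ck)$ replaced by a universal constant. Analogously, Theorem \ref{Mthm2} combined with $h_1 \leq h_k$ reduces \eqref{stabs2} to stability of $h_1$ up to a universal constant.

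Next, for the $k=1$ stability of $\lambda_1$, I combine the Gromov--V.~Milman inequality \eqref{grmils1} with E.~Milman's inequality stated just after \eqref{grmils1}. Under nonnegative Bakry--\'Emery Ricci curvature these yield
\begin{align*}
\frac{1-2\kappa}{2\,\obs_{\mathbb{R}}((M,\mu);-\kappa)} \leq \sqrt{\lambda_1(M,\mu)} \leq \frac{6\,\log(1/\kappa)}{\obs_{\mathbb{R}}((M,\mu);-\kappa)}
\end{align*}
for every $\kappa \in (0,1/2)$. I fix a single $\kappa_0 \in (0,1/2)$ at which $\kappa \mapsto \obs_{\mathbb{R}}((M_\infty,\mu_\infty);-\kappa)$ is continuous; such $\kappa_0$ exists (in fact forms a dense set) because the function is monotone. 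The classical continuity property of the observable diameter with respect to the concentration topology, due to Gromov, then gives $\obs_{\mathbb{R}}((M_n,\mu_n);-\kappa_0) \to \obs_{\mathbb{R}}((M_\infty,\mu_\infty);-\kappa_0)$, and inserting this into the two-sided estimate above yields a universal constant $C > 0$ with
\begin{align*}
\limsup_{n\to\infty} \max \left\{ \frac{\lambda_1(M_n,\mu_n)}{\lambda_1(M_\infty,\mu_\infty)},\; \frac{\lambda_1(M_\infty,\mu_\infty)}{\lambda_1(M_n,\mu_n)} \right\} \leq C.
\end{align*}
The analogous statement for $h_1$ is immediate from this, because under nonnegative Bakry--\'Emery Ricci curvature the Cheeger--Maz'ja inequality \eqref{chemazs} and the Buser--Ledoux inequality \eqref{busled} together force $h_1 \simeq \sqrt{\lambda_1}$ up to universal numeric constants. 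Feeding these $k=1$ results into the reduction of the first paragraph completes the proof.

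The one nontrivial ingredient is the continuity of the observable diameter with respect to the concentration topology. This is essentially built into the definition of $\obdd$: a parameter realizes all $1$-Lipschitz functions as elements of $\mathcal{F}([0,1),\mathbb{R})$, so $\obdd$-convergence forces convergence in measure of the $1$-Lipschitz functions, hence convergence of the distribution tails used to define $\obs_{\mathbb{R}}$ at any continuity point in $\kappa$. Everything else in the argument is a direct combination of the main theorems of the paper with the Gromov--V.~Milman, E.~Milman, Cheeger--Maz'ja, and Buser--Ledoux inequalities already recorded in Section 2.
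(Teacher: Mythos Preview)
Your overall strategy matches the paper's: reduce to $k=1$ via Theorems \ref{Mthm} and \ref{Mthm2}, and then control $\lambda_1$ (respectively $h_1$) through the observable diameter using the Gromov--V.~Milman and E.~Milman inequalities together with the stability of $\obs_{\mathbb{R}}$ under the concentration topology. The paper packages the $k=1$ step as the quantitative Lemmas \ref{stabob} and \ref{stabl}, but your continuity-point-in-$\kappa$ argument is an acceptable variant of the same idea.

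There is, however, one genuine gap. You apply E.~Milman's lower bound, the Buser--Ledoux inequality \eqref{busled}, and Theorem \ref{Mthm} to the \emph{limit} space $(M_\infty,\mu_\infty)$, and all three require nonnegative Bakry--\'Emery Ricci curvature. The hypothesis of the corollary imposes this only on the $M_n$, not on $M_\infty$. The paper handles this explicitly by invoking \cite[Theorem 1.2]{funa6}, which asserts that the condition $\cd(0,\infty)$ is preserved under the concentration topology; since $\cd(0,\infty)$ is equivalent to $\ric_\mu\ge 0$ on weighted manifolds, this forces $(M_\infty,\mu_\infty)$ to have nonnegative Bakry--\'Emery Ricci curvature as well. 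Without this step, the bound $\lambda_k(M_\infty,\mu_\infty)\le \exp(ck)\lambda_1(M_\infty,\mu_\infty)$ in your reduction, and the lower bound on $\sqrt{\lambda_1(M_\infty,\mu_\infty)}$ in terms of $\obs_{\mathbb{R}}((M_\infty,\mu_\infty);-\kappa_0)$ in your $k=1$ argument, are simply unjustified and the proof does not close.
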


Note that dimension of $M_n$ may diverge to infinity as $n\to \infty$. 

The rest of this subsection is devoted to prove Corollary
\ref{stab}. For the proof we first recall the definition of observable
diameter introduced by Gromov in \cite{gromov}:

\begin{dfn}[Observable diameter]\label{obdidef}Let $\kappa>0$. We define the
 \emph{partial diameter}
 \begin{align*}
  \diam (\mu_X,1-\kappa)
  \end{align*}of $\mu_X$ as the infimum of $\diam A$ over all Borel
 subsets $A\subseteq X$ with $\nu(A)\geq 1-\kappa$. Define the
 \emph{observable diameter}
 \begin{align*}
  \obs_{\mathbb{R}}(X;-\kappa)
  \end{align*}of $X$ as the supremum of $\diam (f_{\ast}\mu_X,1-\kappa)$
 over all $1$-Lipschitz functions $f:X\to \mathbb{R}$.
 \end{dfn}
 The idea of the observable diameter comes from the quantum and
 statistical mechanics, i.e., we think of $\mu_X$ as a state on a
 configuration space $X$ and $f$ is interpreted as an observable. 

The next lemma expresses the relation between the observable diameter and
the separation distance. The proof of the lemma is found in
\cite[Subsection 2.2]{funa5}
 \begin{lem}[{\cite{gromov}}]\label{obsep}Let $X$ be an mm-space. For any $\kappa,\kappa'>0$ with $\kappa>\kappa'$, we have
 \begin{enumerate}
  \item $\sep(X;\kappa,\kappa)\leq \obs_{\mathbb{R}}(X;-\kappa')$,
  \item $\obs_{\mathbb{R}}(X;-2\kappa)\leq \sep(X;\kappa,\kappa)$.
  \end{enumerate}
 \end{lem}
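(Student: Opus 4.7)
For part (1), the plan is to turn a pair of well-separated sets witnessing $\sep(X;\kappa,\kappa)$ into a $1$-Lipschitz function whose pushforward is bimodal. Fix any $r<\sep(X;\kappa,\kappa)$ and choose Borel sets $A_0,A_1\subseteq X$ with $\mu_X(A_i)\geq\kappa$ and $\dist_X(A_0,A_1)\geq r$. The function $f(x):=\dist_X(x,A_0)$ is $1$-Lipschitz, $f\equiv 0$ on $A_0$, and $f\geq r$ on $A_1$, so the pushforward $f_\ast\mu_X$ puts mass at least $\kappa$ on $\{0\}$ and at least $\kappa$ on $[r,\infty)$. Any Borel set $B\subseteq\mathbb{R}$ with $f_\ast\mu_X(B)\geq 1-\kappa'>1-\kappa$ must meet both $\{0\}$ and $[r,\infty)$, hence has diameter at least $r$; therefore $\diam(f_\ast\mu_X,1-\kappa')\geq r$, so $\obs_{\mathbb{R}}(X;-\kappa')\geq r$. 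Letting $r\nearrow\sep(X;\kappa,\kappa)$ proves (1).

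For part (2), the plan is the dual quantile argument. The case $\kappa\geq 1/2$ is trivial since $\diam(\nu,1-2\kappa)=0$ for $1-2\kappa\leq 0$, so assume $\kappa<1/2$. Given a $1$-Lipschitz $f:X\to\mathbb{R}$, set
\[
a:=\sup\{t\in\mathbb{R}\mid f_\ast\mu_X((-\infty,t))\leq\kappa\},\qquad b:=\inf\{t\in\mathbb{R}\mid f_\ast\mu_X((t,\infty))\leq\kappa\}.
\]
Standard properties of quantiles give $f_\ast\mu_X((-\infty,a])\geq\kappa$ and $f_\ast\mu_X([b,\infty))\geq\kappa$. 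I first check that $a\leq b$: if not, then for every $c\in(b,a)$ the definitions force $f_\ast\mu_X((-\infty,c))\leq\kappa$ and $f_\ast\mu_X((c,\infty))\leq\kappa$, hence $f_\ast\mu_X(\{c\})\geq 1-2\kappa>0$, which is impossible on a whole interval.

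With $a\leq b$ in hand, define $A_0:=f^{-1}((-\infty,a])$ and $A_1:=f^{-1}([b,\infty))$. These are Borel, disjoint, and satisfy $\mu_X(A_i)\geq\kappa$. Because $f$ is $1$-Lipschitz and $f(A_0)\subseteq(-\infty,a]$, $f(A_1)\subseteq[b,\infty)$, we get $\dist_X(A_0,A_1)\geq b-a$, and hence $b-a\leq\sep(X;\kappa,\kappa)$ by definition of the separation distance. Moreover $f_\ast\mu_X([a,b])\geq 1-f_\ast\mu_X((-\infty,a))-f_\ast\mu_X((b,\infty))\geq 1-2\kappa$, so $\diam(f_\ast\mu_X,1-2\kappa)\leq b-a\leq\sep(X;\kappa,\kappa)$. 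Taking the supremum over $1$-Lipschitz $f$ yields (2).

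The only real subtlety, which I regard as the main technical point, is the quantile bookkeeping in (2), in particular the argument ruling out $a>b$; once that is settled, both inequalities follow directly from the $1$-Lipschitz estimate $|f(x)-f(y)|\leq\dist_X(x,y)$ applied in opposite directions.
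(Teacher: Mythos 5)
Your proof is correct; since the paper only cites \cite{funa5} for this lemma rather than proving it, there is nothing to compare against in the text, but your argument (distance function to one separated set for (1), quantile/median cut for (2)) is exactly the standard Gromov argument the citation points to, and the quantile bookkeeping in (2) — including the exclusion of $a>b$ via an atom of mass $\geq 1-2\kappa$ at every point of $(b,a)$ — is handled correctly.
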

\begin{lem}\label{stabob}Let $X,Y$ be two mm-spaces and assume that
 $\obdd(X,Y)<\e<1$. Then for any $\kappa\in (\,\e,1\,)$ we have
 \begin{align*}
  \obs_{\mathbb{R}}(Y;-\kappa)\leq \obs_{\mathbb{R}}(X;-(\kappa-\e))+2\e.
  \end{align*}
 \begin{proof}The condition $\obdd(X,Y)<\e$ implies the existence of two parameters
  $\varphi_X:[\,0,1\,)\to X$ and $\varphi_Y:[\,0,1\,)\to Y$ such that
  \begin{align*}
  \dist_H(\varphi_X^{\ast}\lip_1(X),\varphi_Y^{\ast}\lip_1(Y))<\e.
   \end{align*}Hence, for any $f\in \lip_1(Y)$, there exists $g\in
  \lip_1(X)$ such that
  \begin{align*}
   \mathcal{L}( |f \circ \varphi_Y-g\circ \varphi_X|>\e)<\e.
   \end{align*}Take a Borel subset $A\subseteq \mathbb{R}$ such that
  $g_{\ast}\mu_X(A)\geq 1-\kappa+\e$ and $\diam
  (g_{\ast}\mu_X,1-(\kappa-\e))=\diam A$. Putting
  \begin{align*}
   B:=f\circ \varphi_Y(\{ |f\circ \varphi_Y- g\circ \varphi_X|\leq \e\} \cap (g\circ \varphi_Y)^{-1}(A)),
   \end{align*}we find
  \begin{align*}
   f_{\ast}\mu_Y(B)\geq (1-\e)+(1-\kappa+\e)-1=1-\kappa.
   \end{align*}Given $s,t\in \{ |f\circ \varphi_Y- g\circ \varphi_X|\leq \e\} \cap (g\circ \varphi_Y)^{-1}(A)$ we have
  \begin{align*}&|f\circ \varphi_Y(s)-f\circ \varphi_Y(t)|\\ \leq \ &|f\circ
   \varphi_Y(s)-g\circ \varphi_X(s)|+ |g\circ \varphi_X(s)-g\circ
   \varphi_X(t)|\\ \ & + |g\circ \varphi_X(t)-f\circ \varphi_Y(t)|\\
   \leq \ & \diam A + 2\e,
   \end{align*}which implies $\diam (f_{\ast}\mu_Y,1-\kappa)\leq \diam A
  +2\e$. This completes the proof.
  \end{proof}
 \end{lem}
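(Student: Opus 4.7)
The plan is to unpack the definition of $\obdd$ directly. The hypothesis $\obdd(X,Y)<\e$ produces parameters $\varphi_X:[0,1)\to X$ and $\varphi_Y:[0,1)\to Y$ whose pullback $1$-Lipschitz classes $\varphi_X^{\ast}\lip_1(X)$ and $\varphi_Y^{\ast}\lip_1(Y)$ lie within Hausdorff distance $\e$ in $(\mathcal{F}([0,1),\mathbb{R}),\me_1)$. Starting from an arbitrary $f\in\lip_1(Y)$, I would exploit this closeness to manufacture a companion $g\in\lip_1(X)$ whose partial-diameter bound on $X$ is already known, and then transfer the bound back to $f$, absorbing the pointwise discrepancy between $f\circ\varphi_Y$ and $g\circ\varphi_X$ as an additive $2\e$ error in the diameter.

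More concretely, fix $f\in\lip_1(Y)$ and select, via the Hausdorff inequality, $g\in\lip_1(X)$ with $\mathcal{L}(|f\circ\varphi_Y-g\circ\varphi_X|>\e)<\e$. Denote the good set in the parameter space by $E:=\{s\in[0,1):|f\circ\varphi_Y(s)-g\circ\varphi_X(s)|\le\e\}$, so $\mathcal{L}(E)>1-\e$. Next, apply the definition of the observable diameter to $g$ at threshold $\kappa-\e$: there exists a Borel $A\subseteq\mathbb{R}$ with $g_{\ast}\mu_X(A)\ge 1-(\kappa-\e)$ and $\diam A\le \obs_{\mathbb{R}}(X;-(\kappa-\e))$. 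The candidate diameter-small set on the $Y$-side is $B:=(f\circ\varphi_Y)\bigl(E\cap(g\circ\varphi_X)^{-1}(A)\bigr)$.

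Two routine verifications close the argument. For the mass of $B$, since $\varphi_X$ and $\varphi_Y$ are measure-preserving, $\mathcal{L}((g\circ\varphi_X)^{-1}(A))=g_{\ast}\mu_X(A)\ge 1-\kappa+\e$, so inclusion-exclusion with $\mathcal{L}(E)>1-\e$ yields $\mathcal{L}(E\cap(g\circ\varphi_X)^{-1}(A))\ge 1-\kappa$, whence $f_{\ast}\mu_Y(B)\ge 1-\kappa$. For the diameter, any two $s,t$ in this good set satisfy $|f\circ\varphi_Y(s)-f\circ\varphi_Y(t)|\le \e+|g\circ\varphi_X(s)-g\circ\varphi_X(t)|+\e\le \diam A+2\e$ by inserting and removing $g\circ\varphi_X(s),g\circ\varphi_X(t)$ and applying the triangle inequality together with the definition of $E$. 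Taking the supremum over $f\in\lip_1(Y)$ gives the stated bound on $\obs_{\mathbb{R}}(Y;-\kappa)$.

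The main bookkeeping subtlety is the threshold shift: the Hausdorff slack $\e$ consumes $\e$ of the mass budget, forcing the partial-diameter estimate on $X$ to be taken at level $\kappa-\e$ rather than $\kappa$, which is exactly why the hypothesis $\kappa>\e$ is needed. Apart from this, the only care required is ensuring the inclusion-exclusion truly returns a set of measure at least $1-\kappa$, and here the strict inequalities $\mathcal{L}(E^c)<\e$ and $g_{\ast}\mu_X(A)\ge 1-\kappa+\e$ provide exactly the required slack.
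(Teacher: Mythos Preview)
Your argument is essentially identical to the paper's: choose parameters witnessing $\obdd(X,Y)<\e$, approximate $f\circ\varphi_Y$ by some $g\circ\varphi_X$ in measure, pick $A\subseteq\mathbb{R}$ realizing the partial diameter of $g_\ast\mu_X$ at level $1-(\kappa-\e)$, and push the intersection of the good set with $(g\circ\varphi_X)^{-1}(A)$ forward by $f\circ\varphi_Y$. The only cosmetic differences are your explicit naming of the good set $E$ and your (correct) use of $(g\circ\varphi_X)^{-1}(A)$ where the paper has a typographical slip.
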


\begin{lem}\label{stabl}Let $(M,\mu_M)$ and $(N,\mu_N)$ be
  two closed weighted Riemannian manifolds of nonnegative Bakry-\'Emery
  Ricci curvature such that $\obdd((M,\mu_M),(N,\mu_N))<1/2$. Assume
 that two positive numbers $\e,\delta$ satisfies 
  $\obdd((M,\mu_M),(N,\mu_N))<\e<1/2$ and $\e+\delta<1/2$. Then we have
 \begin{align}\label{stabls1}
   \lambda_1(N,\mu_N)\geq \lambda_1(M,\mu_M) \Big\{
   \frac{\delta}{2\e\sqrt{\lambda_1(M,\mu_M)}-6\log(\frac{1}{4}-\frac{\e}{2}-\frac{\delta}{2})}
   \Big\}^2
  \end{align}and
  \begin{align}\label{stabls2}
h_1(N,\mu_N)\geq h_1(M,\mu_M)\cdot \frac{\delta}{\e h_1(M,\mu_M)- 6 \log (\frac{1}{4}-\frac{\e}{2}-\frac{\delta}{2})}.
   \end{align}
  \begin{proof}
  Combining (\ref{2.2sss1}), Lemmas \ref{obsep} and
  \ref{stabob} gives that for any $\kappa> \e$ we have 
  \begin{align*}
   \obs_{\mathbb{R}}((N,\mu_N);-\kappa)\leq \ &
   \obs_{\mathbb{R}}((M,\mu_M);-(\kappa-\e))+2\e \\ \leq \ & 
   \sep \Big((M,\mu_M);\frac{\kappa-\e}{2}, \frac{\kappa-\e}{2}\Big)+2\e\\
   \leq \ & \frac{6}{\sqrt{\lambda_1(M,\mu_M)}}\log \frac{2}{\kappa-\e} + 2\e. 
   \end{align*}Lemma \ref{obsep} again yields
  \begin{align*}
   \sep ((N,\mu_N);\kappa,\kappa)\leq
   \frac{6}{\sqrt{\lambda_1(M,\mu_M)}}\log \frac{2}{\kappa-\e}+2\e.
   \end{align*}As in the proof of Lemma \ref{2.2l3} (1) we obtain 
  \begin{align*}
   \alpha_{(N,\mu_N)}(r)\leq
   \e+2\exp(-6^{-1}\sqrt{\lambda_1(M,\mu_M)}(r-2\e)  )
   \end{align*}for any $r>2\e$. By subsutituting
  \begin{align*}
   r:=2\e - \frac{6\log (\frac{1}{4}-\frac{\e}{2}-\frac{\delta}{2})}{\sqrt{\lambda_1(M,\mu_M)}}
   \end{align*}we obtain $\alpha_{(N,\mu_N)}(r)\leq 2^{-1} -
   \delta$. Applying Theorem \ref{2.1t2} then implies the inequality
   (\ref{stabls1}). The proof of (\ref{stabls2}) is similar and we omit it.
  \end{proof}
  \end{lem}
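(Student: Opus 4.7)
The plan is to propagate the spectral/isoperimetric information from $M$ to $N$ along the chain: separation distance on $M$ $\leftrightarrow$ observable diameter on $M$ (Lemma \ref{obsep}) $\to$ observable diameter on $N$ (Lemma \ref{stabob}, using the hypothesis $\obdd((M,\mu_M),(N,\mu_N))<\e$) $\leftrightarrow$ separation distance on $N$ (Lemma \ref{obsep}) $\to$ concentration function on $N$ (via the argument of Lemma \ref{2.2l3}(1)). Once $\alpha_{(N,\mu_N)}(r)$ is bounded for a suitable $r$, the curvature assumption on $N$ allows E.~Milman's Theorem \ref{2.1t2} to convert the concentration estimate into the desired lower bounds on $\lambda_1(N,\mu_N)$ and $h_1(N,\mu_N)$.

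For (\ref{stabls1}), I would start from the Gromov--V.~Milman inequality (\ref{2.2sss1}) on $M$ and push it successively through Lemma \ref{obsep}(2), Lemma \ref{stabob}, and Lemma \ref{obsep}(1) to obtain, for every $\kappa>\e$,
\begin{equation*}
  \sep((N,\mu_N);\kappa,\kappa) \leq \frac{6}{\sqrt{\lambda_1(M,\mu_M)}}\log\frac{2}{\kappa-\e} + 2\e.
\end{equation*}
Repeating the argument in Lemma \ref{2.2l3}(1) for $N$ (take $A\subseteq N$ with $\mu_N(A)\geq 1/2$ and set $\kappa:=\mu_N(N\setminus O_r(A))$) this translates into
\begin{equation*}
  \alpha_{(N,\mu_N)}(r) \leq \e + 2\exp\Big(-\frac{\sqrt{\lambda_1(M,\mu_M)}\,(r-2\e)}{6}\Big) \quad\text{for all } r>2\e.
\end{equation*}
Substituting $r:=2\e - 6\log(\tfrac{1}{4}-\tfrac{\e}{2}-\tfrac{\delta}{2})/\sqrt{\lambda_1(M,\mu_M)}$ makes the right-hand side at most $\tfrac{1}{2}-\delta$, whereupon the $\lambda_1$-assertion of Theorem \ref{2.1t2} applied to $N$ yields $\lambda_1(N,\mu_N)\geq(\delta/r)^2$, which is precisely (\ref{stabls1}) after rearrangement.

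For (\ref{stabls2}) I would run exactly the same chain but start from the Cheeger version (\ref{expche}), $\alpha_{(M,\mu_M)}(r)\leq\exp(-h_1(M,\mu_M)r/6)$, using Lemma \ref{2.2l3}(2) in place of (\ref{2.2sss1}) to convert this into $\sep((M,\mu_M);\kappa,\kappa)\leq(12/h_1(M,\mu_M))\log(1/\kappa)$. The same push-forward produces an analogous $\alpha_{(N,\mu_N)}$ estimate with $\sqrt{\lambda_1(M,\mu_M)}/6$ replaced by $h_1(M,\mu_M)/12$, and then the $h_1$-assertion of Theorem \ref{2.1t2} (in which the factor $1-2(\tfrac12-\delta)=2\delta$ combines with the factor $12$ in the exponent to give a clean $6$) delivers (\ref{stabls2}). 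The whole argument is essentially bookkeeping: the substantive ingredients are E.~Milman's theorem and the observable-diameter/distance framework, and the only mildly delicate point is the strict inequality $\kappa>\kappa'$ required by Lemma \ref{obsep}(1), which is harmless since all bounds depend continuously on $\kappa$ and one can pass to the limit $\kappa'\nearrow\kappa$.
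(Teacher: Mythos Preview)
Your proposal is correct and follows the paper's argument essentially line by line: the same chain (\ref{2.2sss1}) $\to$ Lemma~\ref{obsep}(2) $\to$ Lemma~\ref{stabob} $\to$ Lemma~\ref{obsep}(1) $\to$ the Lemma~\ref{2.2l3}(1) manoeuvre $\to$ Theorem~\ref{2.1t2}, the same substitution for $r$, and the same ``similar'' treatment of (\ref{stabls2}) with $h_1/12$ in place of $\sqrt{\lambda_1}/6$. Your added remarks---handling the strict inequality in Lemma~\ref{obsep}(1) by a limit, and noting that the factor $2\delta=1-2(\tfrac12-\delta)$ cancels against the $12$ to give the $6$ in (\ref{stabls2})---are correct clarifications of points the paper leaves implicit.
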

  \begin{proof}[Proof of Corollary \ref{stab}]Due to Theorem
   \ref{2.1t2} we have $\sup_{n\in \mathbb{N}}\lambda_1(M_n,\mu_n)<+\infty$
   unless $\{(M_n,\mu_n)\}$ concentrates to the one point space. Since
   the condition $CD(0,\infty)$ is preserved under
  concentration topology (\cite[Theorem 1.2]{funa6}), the limit weighted manifold
   $(M_{\infty},\mu_{\infty})$ has nonnegative Bakry-\'Emery Ricci
   curvature. Combining Lemma \ref{stabl} with Theorem \ref{Mthm} we obtain the corollary.
  \end{proof}

  The proof of Corollary \ref{stab} also follows from the following
  lemma and corollary together with Theorems \ref{Mthm} and \ref{2.1t2}:
  \begin{lem}Let $X,Y$ be two mm-spaces such that
   $\obdd(X,Y)<\e<1/(k+1)$. Then for any
   $\kappa_0,\kappa_1,\cdots,\kappa_k,\kappa_0',\kappa_1',\cdots,\kappa_k'>0$
   such that $\kappa_i-(k+1)\e \geq \kappa_i'$ for any $i$, we have 
   \begin{align*}
    \sep(Y;\kappa_0,\kappa_1,\cdots,\kappa_k) \leq \sep(X;\kappa_0',\kappa_1',\cdots,\kappa_k')+2\e.
    \end{align*}
   \begin{proof}Take $k+1$ Borel subsets $A_0,A_1,\cdots,A_k\subseteq Y$
    such that $\mu_Y(A_i)\geq \kappa_i$ for any $i$ and $\min_{i\neq
    j}\dist_Y(A_i,A_j)=\sep(Y;\kappa_0,\kappa_1,\cdots,\kappa_k)$. Since
    $\obdd(X,Y)<\e$ there exist two parameters $\varphi_X:[\,0,1 \,)\to
    X$ and $\varphi_Y:[\, 0, 1\, )\to Y$ such that
    $\bbobd(\varphi_X^{\ast}\dist_X,\varphi_Y^{\ast}\dist_Y)<\e$. For
    each $i=0,1,\cdots, k$, we put $f_i(x):=\dist_Y(x,A_i)$. Since each $f_i$
    is $1$-Lipschitz, the
    condition $\bbobd
    (\varphi_X^{\ast}\dist_X,\varphi_Y^{\ast}\dist_Y)<\e $ implies the
    existence of $k+1$ $1$-Lipschitz functions $g_i:X\to \mathbb{R}$,
    $i=0,1,\cdots,k$, such that $\me_1(f_i\circ \varphi_Y,
    g_i\circ\varphi_X)<\e$. Putting
    \begin{align*}
     \tilde{I}:= \bigcap_{i=0}^k \{ |f_i \circ \varphi_Y - g_i \circ
     \varphi_X|\leq \e\}
     \end{align*}we have $\mathcal{L}(\tilde{I})\geq 1-(k+1)\e$. For
    each $i=0,1,\cdots, k$ we define $B_i\subseteq X$ as $B_i:=\varphi_X (\varphi_Y^{-1}(A_i)\cap
    \tilde{I})$. Note that $\mu_X(B_i)\geq \mathcal{L}(\varphi_Y^{-1}(A_i)\cap
    \tilde{I})\geq \kappa_i-(k+1)\e$. For any $a_i\in \varphi_Y^{-1}(A_i)\cap
    \tilde{I}, a_j \in \varphi_Y^{-1}(A_j)\cap
    \tilde{I}$, $i\neq j$, we get
    \begin{align*}
        \dist_X(\varphi_X(a_i),\varphi_X(a_j))\geq \ &
     |g_i(\varphi_X(a_i))-g_j(\varphi_X(a_j))|\\ \geq \
     &|f_i(\varphi_Y(a_i))-f_j(\varphi_Y(a_j))|-2\e\\
     \geq \ &\dist_Y(A_i,A_j)-2\e,
     \end{align*}which implies that
    \begin{align*}
     \min_{i \neq j}\dist_X(B_i,B_j)\geq \min_{i\neq
     j}\dist_Y(A_i,A_j)-2\e = \sep(Y;\kappa_0,\kappa_1,\cdots,\kappa_k)-2\e.
     \end{align*}This completes the proof.
    \end{proof}
   \end{lem}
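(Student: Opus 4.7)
Set $r := \sep(Y;\kappa_0,\kappa_1,\dots,\kappa_k)$ and fix $\eta>0$. By the definition of the separation distance, there exist Borel subsets $A_0,\dots,A_k\subseteq Y$ with $\mu_Y(A_i)\geq\kappa_i$ for all $i$ and $\min_{i\neq j}\dist_Y(A_i,A_j)\geq r-\eta$. The plan is to transport these sets across to $X$ via parameters that nearly realize $\obdd(X,Y)<\e$, using the $1$-Lipschitz distance-to-$A_i$ functions $f_i(y):=\dist_Y(y,A_i)$ as the bridge between $Y$ and $X$. Each $f_i$ vanishes on $A_i$ and satisfies $f_i\geq \dist_Y(A_i,A_j)$ on $A_j$ for $j\neq i$, so the $f_i$'s encode exactly the separation of the $A_i$'s.

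Choose parameters $\varphi_X:[0,1)\to X$ and $\varphi_Y:[0,1)\to Y$ with $\bbobd(\varphi_X^{\ast}\dist_X,\varphi_Y^{\ast}\dist_Y)<\e$. For each $i$, pick a $1$-Lipschitz function $g_i:X\to\mathbb{R}$ with $\me_1(f_i\circ\varphi_Y,g_i\circ\varphi_X)<\e$, and define
\[
\tilde I:=\bigcap_{i=0}^{k}\bigl\{t\in[0,1):|f_i(\varphi_Y(t))-g_i(\varphi_X(t))|\leq\e\bigr\},\qquad B_i:=\varphi_X\bigl(\varphi_Y^{-1}(A_i)\cap\tilde I\bigr).
\]
A union bound yields $\mathcal{L}(\tilde I)\geq 1-(k+1)\e$, which is positive by the standing hypothesis $\e<1/(k+1)$. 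Since $\varphi_X$ pushes $\mathcal{L}$ to $\mu_X$,
\[
\mu_X(B_i)\geq\mathcal{L}\bigl(\varphi_Y^{-1}(A_i)\cap\tilde I\bigr)\geq\mu_Y(A_i)-(k+1)\e\geq\kappa_i-(k+1)\e\geq\kappa_i'.
\]

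For the separation estimate, take $a_i\in\varphi_Y^{-1}(A_i)\cap\tilde I$ and $a_j\in\varphi_Y^{-1}(A_j)\cap\tilde I$ with $i\neq j$. Applying $1$-Lipschitzness of $g_i$ and the defining inequality of $\tilde I$ at both $a_i$ and $a_j$,
\[
\dist_X(\varphi_X(a_i),\varphi_X(a_j))\geq|g_i(\varphi_X(a_i))-g_i(\varphi_X(a_j))|\geq|f_i(\varphi_Y(a_i))-f_i(\varphi_Y(a_j))|-2\e\geq\dist_Y(A_i,A_j)-2\e,
\]
using $f_i\circ\varphi_Y(a_i)=0$ and $f_i\circ\varphi_Y(a_j)\geq\dist_Y(A_i,A_j)$. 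Taking the infimum over such pairs gives $\min_{i\neq j}\dist_X(B_i,B_j)\geq r-\eta-2\e$, which also forces the $B_i$ to be pairwise disjoint once this quantity is positive (the trivial case being absorbed into the final inequality). Hence $\sep(X;\kappa_0',\dots,\kappa_k')\geq r-\eta-2\e$, and letting $\eta\to 0$ completes the proof.

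There is no real obstacle; the proof is essentially a bookkeeping exercise that balances two losses incurred by the $\obdd$-approximation, namely the $(k+1)\e$ loss in measure (from intersecting $k+1$ near-equality sets) and the $2\e$ loss in separation (from evaluating one Lipschitz approximation at two points). The only conceptual choice is to use the distance-to-$A_i$ functions as the $1$-Lipschitz witnesses, which is the natural device since $\obdd$ is defined through the Hausdorff distance between spaces of $1$-Lipschitz functions.
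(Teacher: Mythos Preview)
Your proof is correct and follows essentially the same approach as the paper's own argument: the same choice of parameters, the same distance-to-$A_i$ witnesses $f_i$, the same set $\tilde I$, and the same transported sets $B_i$. Your distance estimate, which applies the single approximating function $g_i$ at both points $a_i$ and $a_j$, is in fact the correct reading of the paper's chain of inequalities (where the mixed $g_i,g_j$ and $f_i,f_j$ appearing there should all be $g_i$ and $f_i$), and your extra $\eta$-approximation is a harmless refinement covering the case that the supremum defining $\sep$ is not attained.
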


  \begin{cor}\label{stabsepcor}Assume that a sequence $\{X_n \}$ of
   mm-spaces concentrate to an mm-space $Y$. Then we have 
   \begin{align}
    \liminf_{n\to \infty}\sep (X_n;\kappa_0',\kappa_1',\cdots ,
    \kappa_k')\geq \sep(Y;\kappa_0,\kappa_1,\cdots,\kappa_k)
    \end{align}and
   \begin{align}
    \limsup_{n\to \infty}\sep
    (X_n;\kappa_0,\kappa_1,\cdots,\kappa_k)\leq \sep(Y;\kappa_0',\kappa_1',\cdots,\kappa_k')
    \end{align}for any
   $\kappa_0,\kappa_1,\cdots,\kappa_k,\kappa_0',\kappa_1',\cdots, \kappa_k'>0$
   such that $\kappa_i>\kappa_i'$.
   \end{cor}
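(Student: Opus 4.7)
The plan is to derive this corollary directly from the preceding lemma together with the definition of the concentration topology, which asserts that $\obdd(X_n, Y) \to 0$ as $n \to \infty$. Both inequalities follow from the same recipe: fix a small $\e > 0$, invoke the lemma for all sufficiently large $n$, and then let $\e \to 0$ at the end.

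For the liminf inequality, since the hypothesis gives $\kappa_i > \kappa_i'$ for every $i$, I would first choose $\e > 0$ small enough that $\e < 1/(k+1)$ and $\kappa_i - (k+1)\e \geq \kappa_i'$ for all $i = 0, 1, \ldots, k$; this is possible precisely because of the strict inequalities between the $\kappa_i$ and $\kappa_i'$. Concentration gives $\obdd(X_n, Y) < \e$ for all sufficiently large $n$, so the preceding lemma (with $X_n$ playing the role of $X$) yields
\[
\sep(Y; \kappa_0, \ldots, \kappa_k) \leq \sep(X_n; \kappa_0', \ldots, \kappa_k') + 2\e.
\]
Taking the liminf in $n$ and then letting $\e \to 0$ produces the first inequality.

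For the limsup inequality, the argument is symmetric: I would apply the same lemma with the roles of $X_n$ and $Y$ exchanged. Since $\obdd$ is a (symmetric) distance function, the same smallness of $\e$ can be used, and the lemma delivers
\[
\sep(X_n; \kappa_0, \ldots, \kappa_k) \leq \sep(Y; \kappa_0', \ldots, \kappa_k') + 2\e
\]
for all sufficiently large $n$. Taking the limsup in $n$ and then $\e \to 0$ finishes the argument.

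Since the lemma does all the geometric work, there is no real obstacle here; the only subtlety worth flagging is matching parameters, and the strict inequalities $\kappa_i > \kappa_i'$ are exactly what allow the $(k+1)\e$ correction to be absorbed uniformly in $i$. The structure is the standard one for deducing a semicontinuity statement from a quantitative stability estimate.
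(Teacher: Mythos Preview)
Your proposal is correct and is precisely the intended deduction from the preceding lemma; the paper in fact states the corollary without proof, treating it as an immediate consequence, and your argument supplies exactly the routine $\e\to 0$ passage one expects.
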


 \section{Questions}

 In this section we raise several questions which are concerned with
 this paper. We also discuss conjecture which was posed in \cite{funa6}. Throughout this section, unless otherwise stated, we will always assume that
 $(M,\mu)$ is a closed weighted Riemannian manifold of nonnegative
 Bakry-\'Emery Ricci curvature.
  \begin{quest}\label{quest1}Independent of $k$, is it possible to bound $\lambda_{k+1}(M,\mu)/\lambda_{k}(M,\mu)$ or $h_{k+1}(M,\mu)/h_k(M,\mu)$ from above by a universal numeric constant ? 
  \end{quest}Masato Mimura asked me about the fraction of
 $\lambda_{k+1}(M,\mu)/\lambda_k(M,\mu)$. Theorem \ref{Mthm} leads to
 the above question for eigenvalues of the weighted Laplacian.
 Due to Theorems \ref{2.2t4} and \ref{3t2}, in order to give an affirmative
answer to Question \ref{quest1} for eigenvalues it suffices to extend E.~Milman's theorem
(Theorem \ref{2.1t2}) in terms of
$\lambda_k(M,\mu)$ and the $k$-separation
distance, i.e., any $k$-separation inequalities imply appropriate lower bounds of the $k$-th
eigenvalue $\lambda_k(M,\mu)$. Or more weakly, it suffices to prove that
 any logarithmic
$k$-separation inequalities of the form (\ref{3ss1}) give appropriate
estimates of the $k$-th eigenvalue $\lambda_k(M,\mu)$ from below. This
 can also be considered as an extension of \cite[Theorem 1.14]{goz}. In
 \cite{goz} Gozlan, Roberto, and Samson proved that any exponential concentration inequalities imply appropriate Poincar\'e
     inequalities under assuming CD$(0,\infty)$. Notice that by Lemma \ref{2.2l3} exponential concentration inequalities are nothing but
     logarithmic $2$-separation inequalities.

     For multi-way isoperimetric constants, we also need to improve $k^3$
     order in Proposition \ref{leeprop} to some universal numeric constant. The
     following integration argument makes possible to improve $k^3$ order
     but it is not logarithmic separation inequalities:

 \begin{prop}Let $(M,\mu)$ be a closed weighted Riemannian
  manifold and $k$ a natural number. Then for any $\kappa>0$ we have
\begin{align*}
 \sep ((M,\mu); \underbrace{\kappa,\kappa,
 \cdots,\kappa}_{k+1\text{ times}})\leq \frac{2}{\log 2}\cdot\frac{\log (2/\kappa)}{h_k(M,\mu)\kappa}.
 \end{align*}
  \begin{proof}Let $A_0, A_1, \cdots, A_k$ be $k+1$ Borel subsets of $M$
   such that $\mu(A_i)\geq \kappa$ for any $0\leq i\leq k$. Our goal is to prove the following inequality:
   \begin{align}\label{kws1}
    D:=\min_{i\neq j}\dist_M(A_i,A_j)\leq \frac{2}{\log 2}\cdot\frac{\log (2/\kappa)}{h_k(M,\mu)\kappa}. 
    \end{align}
   In order to prove (\ref{kws1}) we may assume that each $A_i$ is given
   by a finite union of open balls. For $r\in [0,D/2)$ we put $B_i:=
   O_{r}(A_i)$, $i=0,1,\cdots, k-1$, and $B_k:=M\setminus
   \bigcup_{i=0}^{k-1}B_i$. By the definition of $h_k(M,\mu)$, we have
   $\mu^{+}(B_{i_0})\geq h_k(M,\mu)\mu(B_{i_0})$ for some $i_0$. Assume
   first that
   $i_0=k$. Since each $B_i$ consists of a finite union of open balls we obtain
 \begin{align*}
  \sum_{i=0}^{k-1}\mu^{+}(B_i)=\mu^{+}(B_k)\geq h_{k}(M,\mu)\mu(B_k)\geq h_k(M,\mu)\kappa. 
  \end{align*}In the case where $i_0\leq k-1$, we get
  \begin{align*}
     \sum_{i=0}^{k-1}\mu^{+}(B_i)\geq
  \mu^{+}(B_{i_0})\geq h_k(M,\mu)\mu (B_{i_0}) \geq h_k(M,\mu )\kappa
   \end{align*}Combining the above two inequalities implies that 
   \begin{align*}
    \mu\Big(\bigcup_{i=0}^{k-1}O_r(A_i)
    \Big)-\mu\Big(\bigcup_{i=0}^{k-1} A_i\Big) = \int_0^r
    \sum_{i=0}^{k-1}\mu^{+}(O_{s}(A_i)) ds\geq h_k(M,\mu)\kappa r,
    \end{align*}which yields
 \begin{align}\label{kws2}
  \mu\Big(M\setminus \bigcup_{i=0}^{k-1} O_r(A_i)\Big)\leq (1- h_k(M,\mu) \kappa
  r )\mu \Big(M\setminus \bigcup_{i=0}^{k-1} A_i\Big).
  \end{align}What follows is a straightforward adaption of
   Gromov-V.~Milman's argument in \cite[Theorem 4.1]{milgro}. 
   Put $\e:=(2\kappa h_k(M,\mu))^{-1}$. If $\e \leq r$, then
   there exists a natural number $j$ such that $j\e \leq r<(j+1)\e$. Iterating
   (\ref{kws2}) $k$ times shows
   \begin{align*}
    \mu\Big(M\setminus O_r \Big( \bigcup_{i=0}^{k-1}A_i\Big)\Big)\leq \ &
    \mu\Big(M\setminus O_{j \e} \Big( \bigcup_{i=0}^{k-1}A_i\Big)\Big)\\
    \leq \ &
    (1-h_k(M,\mu)\kappa \e)\mu\Big(M\setminus O_{(j-1) \e} \Big(
    \bigcup_{i=0}^{k-1}A_i\Big)\Big)\\
    \ & \cdots \\
    \leq \ & (1-h_k(M,\mu)\kappa \e)^{j}\mu\Big(M\setminus
    \bigcup_{i=0}^{k-1}A_i \Big)\\
    \leq \ & (1-h_k(M,\mu)\kappa \e)^{j}\\
    = \ & \exp(-j\log 2)\\
    \leq \ & \exp (-(r/\e)\log 2)\\
    = \ & \exp (-h_k(M,\mu)r \kappa 2\log 2 ). 
    \end{align*}
   If $r<\e$, then we have
   \begin{align*}
    \mu\Big(M\setminus O_r \Big( \bigcup_{i=0}^{k-1}A_i\Big)\Big)\leq
    1\leq 2\cdot 2^{-\e^{-1}r}\leq 2\exp(-h_k(M,\mu)r\kappa 2\log 2 )
    \end{align*}
   Put $r:=D/2$. Combining the above two inequalities we obtain
   \begin{align*}
    \kappa \leq \mu(A_k)\leq  \mu\Big(M\setminus O_{\frac{D}{2}} \Big(
    \bigcup_{i=0}^{k-1}A_i\Big)\Big)\leq 2\exp (-h_k(M,\mu)D \kappa \log 2 ),
    \end{align*}which implies (\ref{kws1}). This completes the proof. 
\end{proof}
  \end{prop}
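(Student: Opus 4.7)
The plan is to adapt the Gromov--V.~Milman exponential-decay argument of Theorem~\ref{2.1t1} from pairs of sets to $(k+1)$-partitions. Given any $k+1$ Borel subsets $A_0,\ldots,A_k\subseteq M$ with $\mu(A_i)\geq\kappa$ and minimum pairwise distance $D:=\min_{i\neq j}\dist_M(A_i,A_j)$, the goal is to upper-bound $D$ by the right-hand side of the inequality. First I would reduce to the case where each $A_i$ is a finite union of open balls, so that the Minkowski boundary measures of disjoint open enlargements are additive. Then for $0<r<D/2$ the neighborhoods $O_r(A_0),\ldots,O_r(A_{k-1})$ are pairwise disjoint, and together with $B_k:=M\setminus\bigcup_{i<k}O_r(A_i)$ they form a partition of $M$ into $k+1$ Borel sets each of measure $\geq\kappa$ (since $A_k\subseteq B_k$).

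The crux is to feed this partition into the definition of $h_k(M,\mu)$: some index $i_0$ satisfies $\mu^+(B_{i_0})\geq h_k(M,\mu)\,\mu(B_{i_0})\geq h_k(M,\mu)\kappa$. Since the $O_r(A_i)$ are disjoint finite unions of open balls, I expect $\mu^+(B_k)=\sum_{i<k}\mu^+(O_r(A_i))$, so whether $i_0=k$ or $i_0<k$ I obtain the uniform-in-$r$ lower bound $\sum_{i<k}\mu^+(O_r(A_i))\geq h_k(M,\mu)\kappa$. Integrating against $r$ and using the coarea-type identity $\mu(\bigcup_{i<k}O_{r_0}(A_i))-\mu(\bigcup_{i<k}A_i)=\int_0^{r_0}\sum_{i<k}\mu^+(O_s(A_i))\,ds$ yields the linear lower bound $\geq h_k(M,\mu)\kappa r_0$, which rearranges into the one-step multiplicative estimate
\[
\mu\Bigl(M\setminus\bigcup_{i<k}O_{r_0}(A_i)\Bigr)\leq (1-h_k(M,\mu)\kappa r_0)\,\mu\Bigl(M\setminus\bigcup_{i<k}A_i\Bigr).
\]

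To bootstrap this linear decay into exponential decay, iterate with step $\e:=(2h_k(M,\mu)\kappa)^{-1}$, chosen so that each step halves the complement measure; applying it $\lfloor r/\e\rfloor$ times at $r=D/2$ should give $\mu(M\setminus\bigcup_{i<k}O_{D/2}(A_i))\leq 2\exp(-h_k(M,\mu)\kappa D\log 2)$, with a routine separate treatment of the range $D/2<\e$. Since $A_k$ lies in this complement, this yields $\kappa\leq 2\exp(-h_k(M,\mu)\kappa D\log 2)$, and solving for $D$ will give the claimed bound $D\leq (2/\log 2)\log(2/\kappa)/(h_k(M,\mu)\kappa)$. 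The main subtle point to pin down is the perimeter identity $\mu^+(B_k)=\sum_{i<k}\mu^+(O_r(A_i))$, which may fail for arbitrary Borel $A_i$; handling this is precisely the role of the initial approximation by finite unions of open balls. The remainder is essentially the Gromov--V.~Milman iteration carried out on a $(k+1)$-partition rather than a single pair.
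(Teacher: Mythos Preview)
Your proposal is correct and follows essentially the same route as the paper's proof: the same reduction to finite unions of open balls, the same $(k+1)$-partition $\{O_r(A_i)\}_{i<k}\cup\{B_k\}$, the same case split on the index realizing $h_k$, the same integrated lower bound and the Gromov--V.~Milman iteration with step $\e=(2h_k\kappa)^{-1}$, including the separate treatment of $r<\e$. You have also correctly identified the one delicate point---the boundary identity $\mu^+(B_k)=\sum_{i<k}\mu^+(O_r(A_i))$---and the reason for the open-ball approximation.
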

  \begin{quest}\label{quest2}What is the right order of $\sqrt{\lambda_k(M,\mu)}/h_k(M,\mu)$, $\lambda_k(M,\mu)/\lambda_1(M,\mu)$,
   and $h_k(M,\mu)/h_1(M,\mu)$ in $k$? Especially
    can we bound $\lambda_k(M,\mu)/\lambda_1(M,\mu)$ and
    $h_k(M,\mu)/h_1(M,\mu)$ from above by some polynomial function of $k$ ?
   \end{quest}

   The following two questions are concerned with the stability of
   eigenvalues of the weighted Laplacian and multi-way isoperimetric constants.

   \begin{quest}\label{quest3}Is it true that if two convex domains $K,L\subseteq
    \mathbb{R}^n$ satisfy $\vol(K)\simeq \vol(L)$, then
     $\eta_k(K)\simeq \eta_k(L)$ or $h_k(K)\simeq h_k(L)$? 
    \end{quest}

    \begin{quest}\label{quest4}Can we get the stability of eigenvalues
     of the weighted Laplacian and
     multi-way isoperimetric constants with respect to the concentration
     topology ? Or more weakly can we replace $\exp(c k)$ and $k^3\exp
     (c k)$ in Corollary \ref{stab} with some universal numeric constant ?
     \end{quest}In view of Corollary \ref{stabsepcor} an
     extension of E.~Milman's theorem for the $k$-separation distance and
     the $k$-th eigenvalue would imply the latter question in
     Question \ref{quest4}.

 In \cite[Conjecture 6.11]{funa6} we raised the following conjecture.

 \begin{conj}\label{funaconj}For any natural
 number $k$ there exists a positive constant $C_k$ depending only on $k$
 such that if $X$ is a compact finite-dimensional Alexandrov space of nonnegative
 curvature, then we have
  \begin{align*}
   \lambda_k(X)\leq C_k \lambda_1(X).
   \end{align*}
  \end{conj}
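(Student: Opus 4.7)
The plan is to imitate, in the Alexandrov setting, the chain of implications that led to Theorem \ref{Mthm} for weighted Riemannian manifolds, and then to combine the resulting inequality with the higher order Cheeger-Maz'ja and Buser-Ledoux inequalities already known (or extendable) in that setting. More precisely, I would first reprove the key statement that, on a compact finite-dimensional Alexandrov space $X$ of nonnegative curvature, a logarithmic $(k+1)$-separation inequality of the form
\begin{align*}
\sep(X;\underbrace{\kappa,\ldots,\kappa}_{k+1})\leq \tfrac{1}{D}\log \tfrac{1}{\kappa^2}
\end{align*}
implies a logarithmic $k$-separation inequality with constant $c/D$; that is, I would extend Theorem \ref{3t2} verbatim. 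Once this is available, combining it with the Alexandrov-space version of Theorem \ref{2.2t4} (upper bound on $\sep$ by $1/\sqrt{\lambda_k(X)}$) and of Proposition \ref{2.2t5} (lower bound on $\sep$ by $1/\sqrt{\lambda_1(X)}$) yields $\lambda_k(X)\leq \exp(ck)\lambda_1(X)$ exactly as in the smooth case, which is strictly stronger than Conjecture \ref{funaconj}.

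For the individual ingredients, the situation is fairly favourable. Compact finite-dimensional Alexandrov spaces of nonnegative curvature satisfy CD$(0,\infty)$ by Petrunin and Zhang--Zhu, so the optimal transport machinery underlying the proof of Theorem \ref{3t2} (dynamical optimal transference plans via Lemma \ref{3l1}, the displacement convexity inequality \eqref{3s1cont}, absolute continuity of interpolants inherited from \eqref{3ss14}) carries over once one replaces the weighted volume measure $\mu$ by $\mu_X$. The heat semigroup theory on Alexandrov spaces developed by Kuwae, together with the Bakry--\'Emery $L^2$-gradient estimate \eqref{22grad} proved by Gigli--Kuwada--Ohta, gives the analogue of Corollary \ref{bakledcor} and hence of Ledoux's Lemma \ref{ledlem}; this in turn yields the Alexandrov-space version of Theorem \ref{extthm} (as already remarked by the author) and, via the Rayleigh quotient argument in Chung--Grigor'yan--Yau, the Alexandrov-space version of Theorem \ref{2.2t4}. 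The Cheeger--Maz'ja inequality $h_1(X)\leq 2\sqrt{\lambda_1(X)}$ follows from the co-area formula on Alexandrov spaces, which is available in that regularity class.

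The main obstacle is extending E.~Milman's Theorem \ref{2.1t2}, i.e.\ the reverse implication ``concentration $\Rightarrow$ Cheeger isoperimetric inequality'' under nonnegative Ricci curvature, to the Alexandrov setting. The original argument uses the concavity of the isoperimetric profile, which relies on the regularity theory of isoperimetric minimizers and is not available off the shelf for Alexandrov spaces. However, the alternative heat semigroup proof of Theorem \ref{2.1t2} due to Ledoux only uses the Bakry-type gradient estimate \eqref{22grad} and the semigroup bound of Lemma \ref{ledlem}, both of which are available on compact finite-dimensional Alexandrov spaces satisfying CD$(0,\infty)$. So the realistic plan is to run Ledoux's heat-flow derivation of E.~Milman's theorem in the Alexandrov setting; then Proposition \ref{2.2t5} follows as stated, and the whole dimension-free argument of Section \ref{prfmain} goes through without structural change.

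Granted this E.~Milman-type statement, a cleaner alternative route avoiding the proof of Theorem \ref{3t2} is purely isoperimetric: the Alexandrov versions of Theorem \ref{extthm} and Theorem \ref{leethm} give
\begin{align*}
\sqrt{\lambda_k(X)}\lesssim k^3\, h_k(X),\qquad h_k(X)\lesssim k^3 \sqrt{\lambda_k(X)},
\end{align*}
while the extended E.~Milman and Cheeger--Maz'ja inequalities give $h_1(X)\simeq \sqrt{\lambda_1(X)}$; combining these with the Alexandrov-space Theorem \ref{Mthm} (proved via Theorem \ref{3t2} as above) yields $h_k(X)\leq k^3\exp(ck)\, h_1(X)$ and hence $\lambda_k(X)\leq C_k\lambda_1(X)$ with $C_k$ polynomial in $k$ times $\exp(ck)$. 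In short, the conjecture reduces to porting E.~Milman's theorem to Alexandrov spaces via Ledoux's heat-flow method, and that reduction is where I expect all the real difficulty to lie.
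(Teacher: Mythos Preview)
The statement you are attempting to prove is labelled \emph{Conjecture} in the paper, and the paper does not claim a proof. Immediately after stating it, the author explains precisely what is missing: Theorems \ref{leethm}, \ref{extthm}, and \ref{2.2t4} extend to compact finite-dimensional Alexandrov spaces, but ``it is not known the corresponding theorem of E.~Milman's theorem (Theorem \ref{2.1t2}) for Alexandrov spaces. Note that we used Theorem \ref{2.1t2} in the proof of Theorem \ref{3t2}.'' Your proposal correctly isolates this as the sole obstruction, and your overall architecture---port Theorem \ref{3t2} to CD$(0,\infty)$ Alexandrov spaces, then combine with Theorem \ref{2.2t4} and Proposition \ref{2.2t5}---matches the reduction the paper itself describes.

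The gap is your assertion that Ledoux's heat-semigroup proof of Theorem \ref{2.1t2} ``only uses'' the Bakry--\'Emery gradient estimate \eqref{22grad} and Lemma \ref{ledlem}, so that it transfers automatically once Gigli--Kuwada--Ohta is invoked. The author is explicitly aware of both the semigroup approach (cited after Theorem \ref{2.1t2}) and the GKO gradient estimate (Remark at the end of Section 4), yet still records E.~Milman's theorem as unavailable in the Alexandrov setting. Ledoux's argument for passing from a concentration bound to a linear isoperimetric inequality requires more than the $L^1$ contraction of Lemma \ref{ledlem}: one needs pointwise $L^\infty$--Lipschitz regularization of $P_t$ together with enough regularity to run the dual semigroup computation against indicator functions and extract an isoperimetric lower bound from a single concentration datum $\alpha(r)\leq\kappa$. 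You have not verified that these steps go through on singular spaces; you have merely asserted it. Until that verification is carried out in detail---or replaced, as the paper suggests, by showing directly that concentration implies exponential concentration under CD$(0,\infty)$ and then invoking Gozlan--Roberto--Samson---what you have written is a restatement of the paper's own reduction of the conjecture, not a proof.
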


     Since Theorems \ref{leethm} and \ref{extthm} hold for
     compact finite-dimensional Alexandrov spaces of nonnegative curvature, the above question amounts to saying the
     existence of $C_k$ such that $h_k(X)\leq C_k h_1(X)$.

     We remark that Theorem \ref{2.2t4} holds for compact finite-dimensional Alexandrov
     spaces. In fact, the only we need in the proof is the
     Davies-Gaffney heat kernel
     estimate
     \begin{align*}
      \int_A\int_B p_t(x,y)d\mu(x)d\mu(y)\leq \sqrt{\mu(A)\mu(B)}\exp\Big(-\frac{\dist^2(A,B)}{4t}\Big)
      \end{align*}for any Borel subsets $A,B$ and asymptotic expansion of heat kernel by eigenvalues and
     eigenfunctions of Laplacian (\cite{cgy1}). These are true for
     compact finite-dimensional Alexandrov spaces (\cite{sturm4},
     \cite{Kuwae}). However it is not known the corresponding theorem of E.~Milman's theorem
     (Theorem \ref{2.1t2}) for Alexandrov spaces. Note that we
     used Theorem \ref{2.1t2} in the proof of
     Theorem \ref{3t2}. In order to give an
     affirmative answer to Conjecture \ref{funaconj}, it suffices to prove that
     any concentration inequalities imply appropriate exponential
     concentration inequalities under assuming CD$(0,\infty)$ or
     Theorem \ref{3t2} holds for general CD$(0,\infty)$ spaces by
     Gozlan-Roberto-Samson's theorem \cite[Theorem 1.14]{goz}.

\bigbreak
\noindent
\bigbreak
\noindent
{\it Acknowledgments.}The author would like to thank to Professors Alexander
Bendikov, Alexander Grigor'yan, Emanuel Milman,
Kazuhiro Kuwae, Karl Theoder Sturm, 
and Nathael Gozlan for their comments and their interests of this paper. He also
thanks to Professor Masato Mimura
for several discussion. A part of this work was done while the author visited Bonn
university and Bielefeld university. 


\section{Appendix}
The only point we need to be care when we prove
Lee-Gharan-Trevisan's theorem (Theorem \ref{leethm}) for the smooth
setting is the following lemma:

\begin{lem}[{\cite[Lemma 2.1]{lgt}}]Let $X$ be an mm-space and $f:X\to \mathbb{R}^n$ a Lipschitz map. Then there
 exists a closed subset $A$ of $X$ such that $ A \subseteq \supp f$ and 
 \begin{align*}
  \frac{\mu_X^+(A)}{\mu_X(A)}\leq 2 \frac{\| |\nabla f| \|_{L^2(\mu_X)}}{\|  f \|_{L^2(\mu_X)}}.
  \end{align*}
 \begin{proof}For any positive real number $t$ we put
  \begin{align*}A_t:=\{ x\in X \mid |f(x)|^2\geq t \}.
   \end{align*}Note that $A_t\subseteq
  \supp f$ for any $t>0$ and 
  \begin{align}\label{aps1}
   \int_0^{\infty} \mu_X(A_t) dt = \|f\|_{L^2(\mu_X)}^2
  \end{align}The co-area inequality (\cite[Lemma 3.2]{bobhou}) implies that
   \begin{align}\label{aps2}
   \int_0^{\infty} \mu_X^+ (A_t) dt \leq \ &\int_M |\nabla (|f|^2)|(x)
    d\mu_X(x)\\ \leq \ &2\int_M
    |f(x)||\nabla f|(x) d\mu_X(x) \tag*{}\\
    \leq \ &2 \| f\|_{L^2(\mu_X)} \||\nabla f| \|_{L^2(\mu_X)}. \tag*{} 
   \end{align}Combining (\ref{aps1}) with (\ref{aps2}) gives
  \begin{align*}
   \frac{\int_0^{\infty}\mu_X^+ (A_t)dt}{\int_0^{\infty} \mu_X(A_t) dt }\leq 2 \frac{\| |\nabla f
   | \|_{L^2(\mu_X)}}{\|f\|_{L^2(\mu_X)}},
   \end{align*}which implies the conclusion of the lemma.
 \end{proof}
 \end{lem}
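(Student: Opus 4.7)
\medskip

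\noindent\textbf{Proof proposal.} The plan is to adapt the graph-theoretic argument of Lee--Gharan--Trevisan by replacing their sums with integrals and handling the resulting co-area estimate with some care. The key idea is a layer-cake/averaging argument on the superlevel sets of $|f|^2$.

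First, for each $t>0$ I would define
\begin{align*}
A_t := \{\, x\in X \mid |f(x)|^2 \geq t \,\},
\end{align*}
which is closed (since $|f|^2$ is continuous as $f$ is Lipschitz) and contained in $\supp f$. By the layer-cake formula applied to the nonnegative function $|f|^2$, one immediately obtains
\begin{align*}
\int_0^\infty \mu_X(A_t)\,dt = \int_X |f(x)|^2\,d\mu_X(x) = \|f\|_{L^2(\mu_X)}^2.
\end{align*}

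Next I would bound the numerator $\int_0^\infty \mu_X^+(A_t)\,dt$ using the co-area inequality of Bobkov--Houdr\'e (\cite[Lemma 3.2]{bobhou}) applied to the Lipschitz function $|f|^2$, giving
\begin{align*}
\int_0^\infty \mu_X^+(A_t)\,dt \leq \int_X |\nabla(|f|^2)|(x)\,d\mu_X(x).
\end{align*}
Using the chain-rule bound $|\nabla(|f|^2)| \leq 2|f|\,|\nabla f|$ (which holds pointwise wherever $f$ is differentiable, and hence almost everywhere by Rademacher's theorem in the model case, or directly from the local Lipschitz constant definition of $|\nabla\cdot|$ used in the paper) followed by the Cauchy--Schwarz inequality, this is at most
\begin{align*}
2\int_X |f|\,|\nabla f|\,d\mu_X \leq 2 \|f\|_{L^2(\mu_X)} \| |\nabla f|\|_{L^2(\mu_X)}.
\end{align*}

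Combining the two estimates yields
\begin{align*}
\frac{\int_0^\infty \mu_X^+(A_t)\,dt}{\int_0^\infty \mu_X(A_t)\,dt} \leq 2\,\frac{\||\nabla f|\|_{L^2(\mu_X)}}{\|f\|_{L^2(\mu_X)}}.
\end{align*}
Finally I would invoke the standard averaging principle: if the pointwise ratio $\mu_X^+(A_t)/\mu_X(A_t)$ strictly exceeded the right-hand side for every $t>0$, integrating the inequality $\mu_X^+(A_t) > C\mu_X(A_t)$ against $dt$ would contradict the displayed inequality. Hence there exists $t_0>0$ with $\mu_X^+(A_{t_0})/\mu_X(A_{t_0})$ satisfying the desired bound, and $A := A_{t_0}$ is the required closed subset of $\supp f$.

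The only genuinely delicate step, and the one alluded to in the paper's remark that ``we have to be care when we treat their argument for the smooth setting,'' is the combined use of the co-area inequality and the chain-rule estimate for $|\nabla(|f|^2)|$ when $f$ is merely a vector-valued Lipschitz map; the discrete analogue in \cite{lgt} is essentially tautological, whereas here one must appeal to a genuine non-smooth calculus fact. Once this is granted, the layer-cake and averaging steps are routine.
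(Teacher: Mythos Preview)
Your proof is correct and follows essentially the same route as the paper: define the superlevel sets $A_t$ of $|f|^2$, use the layer-cake formula for the denominator, the Bobkov--Houdr\'e co-area inequality together with the chain-rule bound and Cauchy--Schwarz for the numerator, and then extract a single good level by averaging. The paper's proof is the same argument with slightly less commentary on the final averaging step and on the closedness of $A_t$.
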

\end{document}